\theoremstyle{plain}
\newtheorem{theorem}{Theorem}[section]
\newtheorem{thmi}{Theorem}
\newtheorem{cori}[thmi]{Corollary}
\newtheorem{lemma}[theorem]{Lemma}
\newtheorem{proposition}[theorem]{Proposition}
\newtheorem{corollary}[theorem]{Corollary}
\theoremstyle{definition}
\newtheorem*{question}{Question}
\newtheorem{definition}[theorem]{Definition}
\newtheorem*{convention}{Convention}
\newtheorem*{ack}{Acknowledgements}
\newtheorem{example}[theorem]{Example}
\newtheorem{remark}[theorem]{Remark}
\numberwithin{equation}{section}
\newcommand{\act}{\curvearrowright}
\newcommand{\id}{\mathord{\operatorname{id}}}
\newcommand{\Z}{\mathbb{Z}}
\newcommand{\Q}{\mathbb{Q}}
\newcommand{\R}{\mathbb{R}}
\newcommand{\N}{\mathbb{N}}
\newcommand{\la}{\left\langle}
\newcommand{\ra}{\right\rangle}
\newcommand{\Comm}{\mathrm{Comm}}
\newcommand{\rE}{\operatorname{E}}
\newcommand{\rV}{\operatorname{V}}
\newcommand{\Stab}{\operatorname{Stab}}
  \newcommand{\abs}[1]{\left\lvert #1\right\rvert}
\newcommand{\HN}{\mathcal H(\mathbb N)}
\newcommand{\HR}{\mathcal H(\mathcal R)}
\newcommand{\HUQ}{\mathcal H(\mathbb U_{\Q^+})}
\newcommand{\HUS}{\mathcal H(\mathbb U_S)}
\newcommand{\HX}{\mathcal H(X)}
\newcommand{\Gr}{\mathcal{G}}
\newcommand{\EG}{\rE(\Gr)}
\newcommand{\VG}{\rV(\Gr)}
\DeclareMathOperator{\diam}{\mathrm{diam}}
\DeclareMathOperator{\Iso}{\mathrm{Iso}}
\DeclareMathOperator{\dom}{\mathrm{dom}}
\DeclareMathOperator{\rng}{\mathrm{rng}}
\newcommand{\norm}[1]{\left\lVert #1 \right\rVert}
\newcommand{\inv}{^{-1}}
\newcommand{\U}{\mathbb{U}}
\newcommand{\UQ}{\mathbb{U}_{\mathbb{Q^+}}}
\newcommand{\UQone}{\mathbb{U}_{S}}
\newcommand{\US}{\UQone}
\newcommand{\autUQone}{\Iso(\mathbb{U}_{S})}
\begin{document}


\baselineskip=17pt


\author[P. Fima]{Pierre Fima}
\address{Pierre Fima
\newline
Universit\'e de Paris, Sorbonne Universit\'e, CNRS, Institut de Math\'ematiques de 
Jussieu-Paris Rive Gauche, F-75013, Paris, France.}
\email{pierre.fima@imj-prg.fr}

\author[F. Le Ma\^itre]{Fran\c{c}ois Le Ma\^itre}

\address{Fran\c{c}ois Le Ma\^itre
\newline
Universit\'e de Paris, Sorbonne Universit\'e, CNRS, Institut de Math\'ematiques de 
Jussieu-Paris Rive Gauche, F-75013, Paris, France.}
  \email{f.lemaitre@math.univ-paris-diderot.fr}

\author[J. Melleray]{Julien Melleray}
\address{Julien Melleray
\newline
Universit\'e de Lyon, Universit\'e Claude Bernard - Lyon 1, CNRS UMR 5208, Institut Camille Jordan, 43 Boulevard du 11 novembre 1928, 69622 Villeurbanne Cedex, France}
\email{melleray@math.univ-lyon1.fr}

  \author[S. Moon]{Soyoung Moon}

\address{Soyoung Moon
\newline
Universit\'e de Bourgogne, Institut Math\'ematiques de Bourgogne, UMR 5584 CNRS, 
BP 47870, 21078 Dijon cedex 
France}
\email{soyoung.moon@u-bourgogne.fr}

\title{Homogeneous actions on Urysohn spaces.}
\begin{abstract}
We show that many countable groups acting on trees, including free products of infinite countable groups and surface groups, are isomorphic to dense subgroups of isometry groups of bounded Urysohn spaces. This extends previous results of the first and last author with Y. Stalder on dense subgroups of the automorphism group of the random graph.  
In the unbounded case, we also show that every free product of infinite countable groups arises as a dense subgroup of the isometry group of the rational Urysohn space. 
\end{abstract}

\subjclass[2020]{Primary 03E15 20E06; Secondary 20B22.}

\keywords{Urysohn space, random graph, Kat\v{e}tov extension, groups acting on trees, homogeneous
	action, dense subgroups of Polish groups, Baire category theorem, group of finitely supported permutations.}

\maketitle

\tableofcontents
\section{Introduction}

There is an extensive, and still growing, literature concerning countable groups $G$ admitting faithful actions on $\N$ which are \emph{highly transitive}, that is, such that for any partial bijection $\sigma$ with finite domain there exists $g \in G$ such that $g\cdot x= \sigma(x)$ for all $x \in \dom(\sigma)$. Such groups are  called highly transitive groups.
It is clear that the group $\mathfrak S_{(\infty)}$ consisting of finitely supported bijections of $\N$ is highly transitive. For a finitely generated example, one can consider the group of permutations of $\Z$ which are translations up to a finite set, or more generally Houghton groups. 

The first explicit examples of  highly transitive non-amenable groups are free groups $\mathbb{F}_n$ for $2\leq n\leq +\infty$, as was shown in 1976 by T.P. McDonough \cite{zbMATH03558079} (see also the work of J.D. Dixon in \cite{zbMATH04105195}). The case of a general free product has been studied by A.A.W. Glass and S.H. McCleary in \cite{zbMATH04181610} and later settled by S.V. Gunhouse \cite{zbMATH04193986} and independently by K.K. Hickin \cite{zbMATH00120147}: a free product $\Gamma=\Gamma_1*\Gamma_2$ of two non-trivial countable groups $\Gamma_i$, $i=1,2$, is highly transitive if and only if $\Gamma$ is i.c.c.\footnote{A group is i.c.c. (infinite conjugacy classes) when all the conjugacy classes of nonidentity elements are infinite.}, which is equivalent to ask that at least one  the $\Gamma_i$ has cardinal at least $3$. In the last few years, many new examples of highly transitive groups have been discovered:  non elementary hyperbolic groups with trivial finite radical (V.V. Chaynikov \cite{zbMATH06120601}), surface groups (D. Kitroser \cite{zbMATH06204044}), Out$(\mathbb{F}_n)$ for $n\geq 4$ (S. Garian and Y. Glasner \cite{zbMATH06190856}), many groups acting on trees 
(P. Fima, S. Moon and Y. Stalder \cite{zbMATH06503094}), and finally acylindrically hyperbolic group with trivial finite radical (M. Hull and D. Osin \cite{zbMATH06673802}).

Note that the class of highly transitive countable groups consists of all countable groups which are isomorphic to a dense subgroup of $\mathfrak S_\infty$, the group of all permutations of $\N$. It makes sense to ask a similar question for other groups.

\begin{question}Given a Polish group $G$, what can be said about the class of all its countable dense subgroups? 
\end{question}

This question is particularly interesting when $G$ is the isometry group of a \emph{homogeneous} countable metric space, i.e. a countable metric space with the property that any partial isometry with finite domain extend to a global isometry. Indeed, if $(X,d)$ is a countable homogeneous metric space, then a countable subgroup $\Gamma$ of $\Iso(X)$ is dense\footnote{Note that since $X$ is countable, $\Iso(X)$ is endowed with the topology of pointwise convergence, viewing $X$ as a \emph{discrete} space.} if and only if its natural action on $X$ satisfies the following definition. 

\begin{definition}
An action of a countable group $\Gamma$ on a homogeneous metric space $(X,d)$ is called \textbf{homogenous} if every partial isometry of $X$ with finite domain extends to an element of $\Gamma$. 
\end{definition}

If $(X,d)$ is a homogeneous metric space, we denote by $\HX$ the class of all countable groups which admit a \emph{faithful} homogeneous isometric action on $X$. Note that the set  $\N$ endowed with the discrete metric is a homogeneous metric space, and its isometry group is $\mathfrak S_\infty$, so $\HN$ denotes the class of highly transitive groups.
In this paper, we are studying the class $\mathcal H(X)$ where $X$ is one of the \emph{countable Urysohn spaces}.

To define countable Urysohn spaces, one first fixes a countable set $S \subset [0,+\infty[$, containing $0$, and considers the class of all $S$-metric spaces, that is, the class of all metric spaces whose distance takes its values in $S$. Under a suitable assumption on $S$ (see the next section), there exists a generic object in this class, which is characterized as being the unique $S$-metric space which contains an isometric copy of each finite $S$-metric space (\emph{universality}), and such that any isometry between finite subsets extends to the whole space (\emph{ultrahomogeneity}). We call this space the $S$-Urysohn space (when $S=\Q$, this space was built by Urysohn in the seminal paper \cite{zbMATH02582889}) and denote it by $\U_S$. For instance, when $S=\{0,1\}$, $\U_S$ is simply a countable infinite set with the discrete metric; for $S=\{0,1,2\}$, $\U_S$ may be identified with Erd\"os--Rad\'o's random graph $\mathcal R$ equipped with the path metric.

Little is known about the class $\HR$. It contains $\mathbb{F}_2$ (H.D. Macpherson \cite{zbMATH03987259}), a locally finite group (M. Bhattacharjee and H.D. Macpherson \cite{zbMATH02186801}), many groups acting on trees such as  free products of any two infinite countable groups (P. Fima, S.Moon and Y. Stalder \cite{FMS16}). A result of S. Solecki (whose proof appears in a paper by C. Rosendal \cite{Rosendal2011}) yields that the class $\HUS$ always contains a locally finite group. Our main result relies on the following notion, which was introduced in \cite{zbMATH06503094}.

\begin{definition}\label{def: HCF intro}
A subgroup $\Sigma$ of a countable group $\Gamma$ is \textbf{highly core-free} if whenever $F$ is a finite subset of $\Gamma$, we may find $\gamma\in \Gamma$ such that the map 
\begin{align*}
\Sigma\times F&\to \Gamma\\
(\sigma,f)&\mapsto \sigma \gamma f
\end{align*}
is injective and its range $\Sigma \gamma F$ is disjoint from $F$. 
\end{definition}

Easy examples of highly core-free subgroups are provided by the trivial group and any finite subgroup of an i.c.c. group. 

\begin{thmi} [{see Thm. \ref{Trees}}]
Let $\Gamma$ be a countable group acting on a non-trivial tree $\mathcal{T}$. If each edge stabilizer is highly core-free in the two corresponding vertex stabilizers, then $\Gamma\in\HUS$ for all bounded distance set $S$.
\end{thmi}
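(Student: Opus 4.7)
The plan is to construct a faithful action of $\Gamma$ by isometries on a countable $S$-metric space that turns out to be isometric to $\U_S$, such that every partial isometry with finite domain extends to an element of $\Gamma$. Producing such an action directly witnesses $\Gamma \in \HUS$. The strategy would follow the template set in \cite{zbMATH06503094} for the random graph (the case $S = \{0, 1, 2\}$), adapted to arbitrary bounded distance sets.

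First, I would use Bass-Serre theory to reformulate the action on $\mathcal{T}$ as a presentation of $\Gamma$ as the fundamental group of a graph of groups whose edge groups $\Sigma$ are highly core-free in the adjacent vertex groups. The highly core-free hypothesis then plays the role of a genericity principle: given any finite $F \subset \Gamma$, one can always locate an element $\gamma \in \Gamma$ such that the map $(\sigma,f)\mapsto \sigma\gamma f$ is injective and $\Sigma\gamma F$ is disjoint from $F$. This is exactly the combinatorial freedom required to extend a partial $S$-metric without creating unwanted identifications or collisions along $\Gamma$-orbits.

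I would then run a back-and-forth construction producing a sequence of finite $\Gamma$-stable $S$-metric subspaces $(X_n, d_n)$, whose union will be the sought-after copy of $\U_S$. The enumeration would interleave three kinds of tasks: (i) realize each finite one-point $S$-metric extension of a finite subset of $X_n$ as an actual vertex in $X_{n+1}$, for universality; (ii) realize each partial isometry of $(X_n,d_n)$ by some $\gamma \in \Gamma$, for homogeneity of the action; and (iii) ensure that each nontrivial element of $\Gamma$ acts nontrivially on some $X_n$, for faithfulness. Each task would be accomplished by choosing a suitable $\gamma$ furnished by the highly core-free property along the graph of groups, then extending $d_n$ by $\Gamma$-invariance across the new orbits.

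The main obstacle will be to verify, at each stage, that the extension of $d_n$ obtained by transporting prescribed distances via the chosen $\gamma$ is well-defined, $\Gamma$-equivariant, and satisfies the triangle inequality with values in $S$. The boundedness of $S$ is decisive here: under the assumptions that guarantee the existence of $\U_S$, the class of finite $S$-metric spaces enjoys the strong amalgamation property and is a Fra\"iss\'e class whose limit is $\U_S$, so any consistent partial assignment of $S$-distances to finitely many new points over a finite base can always be completed. The final identification of $\bigcup_n X_n$ with $\U_S$, and of the $\Gamma$-action with a homogeneous one, is then a standard Fra\"iss\'e-theoretic consequence of the three task families having been exhausted throughout the back-and-forth.
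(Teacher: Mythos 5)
Your opening move --- the Bass--Serre reduction --- matches the paper, which removes one edge $e_0$ from the quotient graph and writes $\Gamma$ either as an amalgam $\Gamma_1\underset{\Sigma}{*}\Gamma_2$ or as an HNN extension $\HNN(H,\Sigma,\theta)$ over the corresponding edge group. But the core of your construction has a genuine gap. You propose to build the action of $\Gamma$ itself as an increasing union of $\Gamma$-invariant $S$-metric spaces $X_n$ and to treat homogeneity as a back-and-forth task of the form ``realize a given finite partial isometry $\varphi$ of $X_n$ by some $\gamma\in\Gamma$''. Once the space is $\Gamma$-invariant and the isometric action on it has been defined (which is exactly what ``extending $d_n$ by $\Gamma$-invariance across the new orbits'' forces), the restriction of every $\gamma\in\Gamma$ to $X_n$ is already completely determined; at later stages there is no freedom left to decree that some group element restricts to $\varphi$. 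So an equivariant tower can deliver universality (this is essentially the paper's equivariant Kat\v{e}tov construction $E_S^\Gamma(X)$) and faithfulness, but it cannot deliver homogeneity, which is the whole difficulty. The appeal to the Fra\"iss\'e/strong amalgamation property of finite $S$-metric spaces only guarantees that metric extensions exist; it says nothing about realizing prescribed partial isometries by elements of the abstractly given group $\Gamma$, and the highly core-free hypothesis by itself does not allow you to represcribe how a fixed element acts on points where its action is already defined. (Also, the approximating subspaces cannot literally be finite and $\Gamma$-stable, since the vertex groups are infinite.)

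What is missing is the paper's splitting-and-twisting mechanism. One first constructs, for the factors, strongly free, mixing actions on $\U_S$ with the extension property (Corollary \ref{ActionU}), for which the edge group $\Sigma$ is highly core-free in the metric sense; these actions are then \emph{fixed}, and all the freedom is pushed into a single isometry $\alpha$ ranging over the Polish group $Z$ of isometries of $\U_S$ commuting with $\Sigma$ (or intertwining $\Sigma$ and $\theta(\Sigma)$ in the HNN case). One defines $\pi_\alpha$ to agree with the given action on one factor and with its $\alpha$-conjugate on the other (resp.\ to send the stable letter to $\alpha$). Homogeneity becomes achievable because, given $\varphi$ and a finite set $F$, the highly core-free property produces elements $g_1,h_1$ in one factor and $g_2$ in the other placing the relevant configurations pairwise at distance $1$, and Proposition \ref{Extension} (equivariant extension of $\Sigma$-equivariant partial isometries, which uses strong freeness and mixing of $\Sigma$) yields $\beta\in Z$ agreeing with $\alpha$ on $F$ such that $\pi_\beta(h_1^{-1}g_2g_1)$ extends $\varphi$; a Baire category argument over $Z$ (Lemmas \ref{LemHomogeneous}, \ref{LemFaithful} and their HNN analogues) then produces one $\alpha$ making $\pi_\alpha$ simultaneously faithful and homogeneous. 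This genericity over the twisting parameter, rather than a direct back-and-forth on the $\Gamma$-action, is the ingredient your proposal lacks.
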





\begin{cori}[{see Cor. \ref{cor: amalgamated examples} and Cor. \ref{cor: examples HNN}}]
Let $S$ be a bounded distance set  and $\Gamma_1,\Gamma_2,H$ be countably infinite groups with a common subgroup $\Sigma$. Let also $\theta\,:\,\Sigma\rightarrow H$ be an injective group homomorphism. The following holds.
\begin{enumerate}
\item If $\Sigma$ is highly core-free in both $\Gamma_1,\Gamma_2$ then $\Gamma_1\underset{\Sigma}{*}\Gamma_2\in\HUS$. In particular:
\begin{enumerate}
\item For any infinite countable groups $\Gamma_1$, $\Gamma_2$ one has $\Gamma_1*\Gamma_2\in\HUS$.
\item If $\mathcal S_g$ is a closed, orientable surface of genus $g>1$, then $\pi_1(\mathcal S_g)\in\HUS$.
 \end{enumerate}
 \item If $\Sigma$ and $\theta(\Sigma)$ are both highly core-free in $H$ then ${\rm HNN}(H,\Sigma,\theta)\in\HUS$.
\end{enumerate}
\end{cori}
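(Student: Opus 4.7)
The plan is to derive both parts of the corollary from Theorem A by exhibiting each group as a group acting on its Bass--Serre tree. For $\Gamma = \Gamma_1 *_\Sigma \Gamma_2$, this tree $\mathcal T$ has two orbits of vertices with stabilizers conjugate to $\Gamma_1$ and $\Gamma_2$, and one orbit of edges with stabilizer conjugate to $\Sigma$; the distinguished edge sits between vertices with stabilizer $\Gamma_1$ and $\Gamma_2$, with edge stabilizer $\Sigma$ embedded in each as the obvious subgroup. Since $\Gamma_1,\Gamma_2$ are infinite, $\mathcal T$ is non-trivial. For $\Gamma = \mathrm{HNN}(H,\Sigma,\theta)$, the tree has a single orbit of vertices with stabilizers conjugate to $H$; the distinguished edge joins $eH$ to $tH$ (where $t$ is the stable letter), and its stabilizer $\Sigma$ sits inside the first vertex stabilizer $H$ directly, and inside the second vertex stabilizer $tHt^{-1}$ as $t\theta(\Sigma)t^{-1}$ --- that is, after conjugation by $t^{-1}$, as $\theta(\Sigma)\leq H$.

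For (1), the hypothesis that $\Sigma$ is highly core-free in each $\Gamma_i$ transfers to every incident edge/vertex pair of $\mathcal T$ because the condition in Definition \ref{def: HCF intro} is manifestly invariant under simultaneous conjugation of the subgroup and the ambient group. Theorem A then yields $\Gamma \in \HUS$. The special cases go as follows. In (1)(a), take $\Sigma=\{e\}$: the injectivity of $(\sigma,f)\mapsto \sigma\gamma f$ is automatic, and one merely needs $\gamma \in \Gamma_i\setminus FF^{-1}$, which exists because $\Gamma_i$ is infinite; hence $\{e\}$ is highly core-free in any infinite countable group. For (1)(b), one fixes a splitting of $\pi_1(\mathcal S_g)$ along a simple closed curve to realize it as an amalgamated product of two free groups over an infinite cyclic subgroup generated by the boundary word, and then verifies that this cyclic edge subgroup is highly core-free in each free vertex factor. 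This is the only non-formal step, and it relies on malnormality of the boundary word in the appropriate free group together with an explicit ping-pong / Nielsen-style choice of $\gamma$.

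For (2), by the same conjugation-invariance, the hypothesis that $\Sigma$ and $\theta(\Sigma)$ are both highly core-free in $H$ provides exactly the input needed for Theorem A applied to the HNN Bass--Serre tree described above, and the conclusion follows immediately. The main obstacle in the whole argument is thus (1)(b): once the splitting is chosen, one still has to exhibit, for every finite $F$ in a free factor, a conjugator $\gamma$ witnessing the highly core-free condition for the cyclic boundary subgroup. This is a concrete free-group computation that does not follow formally from Theorem A and will require a genuine combinatorial argument.
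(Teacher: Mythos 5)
Most of your reduction is sound and matches the intended logic: parts (1) and (2), as well as (1)(a), do follow from Theorem \ref{Trees} via the Bass--Serre trees of $\Gamma_1\underset{\Sigma}{*}\Gamma_2$ and ${\rm HNN}(H,\Sigma,\theta)$, using that the highly core-free condition of Definition \ref{def: HCF intro} is conjugation-invariant, and your argument that the trivial subgroup is highly core-free in any infinite group (choose $\gamma\notin FF^{-1}$) is correct. (The paper in fact proves these cases directly from Theorems \ref{ThmMain} and \ref{ThmHNN} together with Corollary \ref{ActionU}, and Theorem \ref{Trees} is itself deduced from those, so the two routes amount to the same thing.)

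The genuine gap is in (1)(b), and you acknowledge it yourself: the whole point of that item is the highly core-freeness of the cyclic edge subgroup, and you neither prove it nor reduce it to a quotable statement. Worse, your per-genus splitting makes the task strictly harder than necessary: cutting $\mathcal S_g$ along a separating simple closed curve presents $\pi_1(\mathcal S_g)$ as an amalgam of free groups over the cyclic subgroup generated by a \emph{product of several commutators} inside a free group of rank up to $2g-2$, so you would need highly core-freeness of $\langle [a_1,b_1]\cdots[a_h,b_h]\rangle$ in $\mathbb F_{2h}$ for all relevant $h$ --- and ``malnormality plus ping-pong'' does not formally yield the condition of Definition \ref{def: HCF intro}, which asks for a single $\gamma$ making $\Sigma\times F\to\Sigma\gamma F$ injective with $\Sigma\gamma F\cap F=\emptyset$ for every finite $F$; this requires a genuine verification. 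The paper avoids all of this: it only needs the single fact that $\langle [a,b]\rangle$ is highly core-free in $\mathbb F_2=\la a,b\ra$ (quoted from \cite[Ex.\ 1.11]{zbMATH06503094}), which settles the genus-$2$ case via $\pi_1(\mathcal S_2)=\mathbb F_2\underset{\langle[a,b]\rangle}{*}\mathbb F_2$, and then obtains every $\pi_1(\mathcal S_g)$, $g\geq 2$, as a finite-index subgroup of $\pi_1(\mathcal S_2)$ by covering theory, concluding with Corollary \ref{cor: restrictions HUS} (finite-index subgroups of groups in $\HUS$ remain in $\HUS$, a consequence of topological simplicity of $\Iso(\U_S)$). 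Either supply the combinatorial proof of highly core-freeness for the boundary words you need, or restrict to genus $2$ and add the finite-index/covering argument; as written, (1)(b) is not established.
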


We also study unbounded Urysohn spaces, for instance when $S=\Q^+$. In that case, we obtain the following result (see Corollary \ref{CorUnbounded}).

\begin{thmi}\label{thmi: homogeneous on UQ}
For any infinite countable groups $\Gamma_1$ and $\Gamma_2$ one has $\Gamma_1*\Gamma_2\in\HUQ$.
\end{thmi}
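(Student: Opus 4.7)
My plan is to run essentially the same back-and-forth scheme that proves Theorem A, but with $S = \mathbb{Q}^+$ in place of a bounded distance set. The starting point is identical: take the Bass--Serre tree $\mathcal T$ of $\Gamma = \Gamma_1 * \Gamma_2$, on which $\Gamma$ acts with trivial edge stabilizers and with vertex stabilizers conjugate to $\Gamma_1$ or $\Gamma_2$. Since $\Gamma_1$ and $\Gamma_2$ are both infinite, the trivial subgroup is highly core-free in each of them (for any finite $F$ one just picks $\gamma \notin F F^{-1}$), so the hypotheses of Theorem A are met. The bounded case thus gives $\Gamma \in \mathcal H(\mathbb{U}_{\mathbb{Q} \cap [0,N]})$ for every $N$; the goal is to upgrade this to $\mathcal H(\UQ)$.

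I would build, by induction, a nested sequence of finite $\mathbb{Q}^+$-metric spaces $X_0 \subset X_1 \subset \cdots$ equipped with partial $\Gamma$-actions by partial isometries, so that $X := \bigcup_n X_n$ is isometric to $\UQ$ and the induced action of $\Gamma$ on $X$ is faithful and homogeneous. The enumeration interleaves three tasks: (a) realize each finite partial isometry $\sigma_n$ of $X$ as the restriction of some $g_n \in \Gamma$, using the tree $\mathcal T$ to read off group elements along well-chosen paths exactly as in the proof of Theorem A, with the fact that trivial edge stabilizers are highly core-free ensuring no collision with earlier choices; (b) fulfill every one-point extension request so that $X$ is $\mathbb{Q}^+$-universal and ultrahomogeneous, hence isometric to $\UQ$; (c) guarantee that every nontrivial $\gamma \in \Gamma$ eventually moves some point of $X$, to secure faithfulness.

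The main obstacle, and the only place where the unbounded case genuinely differs from the bounded one, is reconciling tasks (a) and (b) in the absence of a diameter bound. In the bounded case a newly added point $v$ can always be placed at the maximal distance $\diam(S)$ from every previously defined point, which automatically satisfies every triangle inequality and leaves ample slack for the homogeneity bookkeeping. With $S = \mathbb{Q}^+$ this shortcut is gone: each freshly chosen distance $d(v,w) \in \mathbb{Q}^+$ must lie in the interval
\[
\max_{w'} \abs{d(v,w') - d(w,w')} \;\le\; d(v,w) \;\le\; \min_{w'} \bigl( d(v,w') + d(w,w') \bigr),
\]
the extrema being taken over the previously placed points $w'$ for which $d(v,w')$ has already been set. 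The technical heart of the proof is to verify that this interval remains nonempty throughout the induction, and then to choose values inside it that simultaneously meet the outstanding universality and isometry demands. Once this metric-extension step is in place, the construction proceeds exactly as in the bounded case and yields a faithful homogeneous action of $\Gamma_1 * \Gamma_2$ on $\UQ$.
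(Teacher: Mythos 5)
There is a genuine gap: you correctly locate the difficulty (in the unbounded case one can no longer place new points ``at distance $\diam(S)$ from everything'', so every new distance is constrained on both sides by triangle inequalities), but your proposal stops exactly there. Saying that ``the technical heart of the proof is to verify that this interval remains nonempty throughout the induction, and then to choose values inside it'' is a restatement of the problem, not a solution; nothing in the proposal explains \emph{why} the homogeneity requests (task (a)), which force you to match up prescribed finite configurations by group elements of $\Gamma_1*\Gamma_2$, are compatible with the already-fixed distances. The paper's answer to precisely this point is a new quantitative freeness notion: an action \emph{strongly disconnects finite sets} if every finite $F$ can be moved by some $\gamma$ so that \emph{all} cross-distances $d(x,\gamma y)$, $x,y\in F$, equal one common value $K$, for every sufficiently large $K$. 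This is the correct substitute for ``amalgamation over the empty set'': putting two finite configurations at a single large constant distance from each other makes every triangle inequality automatic and leaves the freedom needed to realize an arbitrary prescribed partial isometry. Producing, for an arbitrary countable group, a strongly free action on $\U_\Q$ with this property is itself nontrivial (Proposition \ref{p:unboundedfreeaction}): it requires a modified Kat\v{e}tov tower in which the metric is rescaled at each stage by a subadditive surjection so as to restore strong disconnection while being preserved at larger and larger scales, with extensions taking values in finer and finer grids $\tfrac1{n!}\N$ so that all $\Q$-valued one-point extensions are eventually realized. None of these ingredients, nor any replacement for them, appears in your outline.

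A second, structural, discrepancy: the bounded-case proofs you invoke (Theorems \ref{ThmMain}, \ref{ThmHNN}, \ref{Trees}) are not back-and-forth constructions of partial actions of $\Gamma$; they are Baire category arguments in $\Iso(\U_S)$, where one fixes global actions of the factors and shows that for a generic conjugator $\alpha$ the representation $\pi_\alpha$ is faithful and homogeneous. The paper's unbounded proof follows the same scheme: density of homogeneity uses strong disconnection (Lemma \ref{lem: free prod homogeneous for unbounded Urysohn}), while density of faithfulness can no longer be obtained by adding a point at maximal distance from a whole orbit, and is instead proved by a separate argument extending isometric \emph{involutions} step by step along the letters of a reduced word (the lemma preceding Lemma \ref{lem: free prod faithful for unbounded Urysohn}). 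Your task (c) would face the same obstruction and is likewise left unaddressed. Also note that the remark that the bounded case gives $\Gamma\in\mathcal H(\mathbb U_{\Q\cap[0,N]})$ for all $N$ buys nothing here: the paper emphasizes that relations between the classes $\HUS$ for different $S$ are open, and no ``upgrade'' from bounded to unbounded distance sets is available or used.
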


The method of proof finds its roots in a paper of Dixon \cite{zbMATH04105195}, and is an adaptation to the context of $S$-Urysohn spaces of techniques of P. Fima, S. Moon and Y. Stalder \cite{zbMATH06152250,zbMATH06503094,FMS16}. Let us describe briefly the argument in the case of a free product of infinite groups $\Gamma \ast \Lambda$. One first proves that any countable group admits a ``sufficiently free'' and ``sufficiently rich'' action on $\mathbb U_S$
(notions that are made clear below; this is easier to do in the case when $S$ is bounded, explaining why we have stronger results in that case). Then one uses a Baire category argument: start from two sufficiently free, sufficiently rich actions $\pi_1,\pi_2$ of $\Gamma, \Lambda$ respectively. Then, for any $\alpha \in \Iso(\U_S)$, one can consider the action $\pi_\alpha$ of $\Gamma \ast \Lambda$ which coincides with $\pi_1$ on $\Gamma$ and with $\alpha \pi_2 \alpha^{-1}$ on $\Lambda$; we then prove that $\{\alpha \colon \pi_\alpha \text{ is faithful }\}$ and 
$\{\alpha \colon \pi_\alpha(\Gamma \ast \Lambda) \text{ is dense} \}$ are both dense $G_\delta$ in $\Iso(\U_S)$, thus the intersection $O$ of these two sets is nonempty; and for any $\alpha \in O$ the subgroup $\pi_\alpha(\Gamma \ast \Lambda)$ is  dense in $\Iso(\U_S)$ and isomorphic to $\Gamma \ast \Lambda$. 

When $S$ is bounded, this basic strategy can be employed, with some technical modifications, to cover the case of amalgamated free products over highly core-free subgroups, and thus surface groups, as well as the case of free products where one of the factors is finite and the other infinite. Unfortunately, while the basic structure and underlying ideas are the same in all these proofs, technical aspects involving the triangle inequality differ, forcing us to write down several times some very similar arguments.

Looking at the results discussed above, one question in particular begs to be answered: what happens to the class $\HUS$ when $S$ varies? Do these classes all coincide, can there be nontrivial inclusions, etc.? This question remains largely open; however, we do manage to establish in the last section the following result. 

\begin{thmi}[{see Thm. \ref{t:anexample}}]
The group $\mathfrak S_{(\infty)}$ of all finitary permutations of $\N$ does not belong to $\HUS$ unless $|S|=2$, that is, unless $\Iso(\U_S) \cong \mathfrak S_\infty$. 
\end{thmi}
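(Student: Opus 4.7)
The plan is to argue by contradiction. Suppose $\rho\colon \mathfrak S_{(\infty)}\hookrightarrow \Iso(\U_S)$ is a faithful homogeneous action with $|S|\geq 3$, and pick two distinct positive distances $a, b \in S$. By homogeneity the action is transitive, and the $\mathfrak S_{(\infty)}$-orbits on $\U_S\times \U_S$ are in bijection with $S$; in particular, fixing $x_0\in \U_S$, the stabilizer $H = \Stab_\rho(x_0) \leq \mathfrak S_{(\infty)}$ is a proper core-free subgroup whose double cosets $H\backslash \mathfrak S_{(\infty)}/H$ are in bijection with $S$, and $H$ acts transitively on every sphere $\{y\in \U_S : d(x_0,y) = s\}$ for $s\in S\setminus\{0\}$.

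The central object to analyse is the image of a transposition. Let $\tau = (1,2)$ and $\iota := \rho(\tau)$; faithfulness gives $\iota \neq \mathrm{id}$. Since all transpositions in $\mathfrak S_{(\infty)}$ form a single conjugacy class, the images of all transpositions are $\Iso(\U_S)$-conjugate to $\iota$, and in particular they all share the same ``2-cycle distance spectrum'' $D := \{d(x, \iota(x)) : \iota(x)\neq x\}\subseteq S\setminus\{0\}$ and the same isometry type of fixed-point set. Crucially the centralizer $Z_{\mathfrak S_{(\infty)}}(\tau) = \langle\tau\rangle \times \mathfrak S(\{3,4,\ldots\})$ is itself isomorphic to $\mathfrak S_{(\infty)}$, and its $\rho$-image centralizes $\iota$; thus $\rho(\mathfrak S(\{3,4,\ldots\}))$ preserves both $\mathrm{Fix}(\iota)$ and the partition of $\U_S\setminus \mathrm{Fix}(\iota)$ into $\iota$-orbits. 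By a relative-homogeneity argument using partial isometries that fix the 2-cycle $\{1,2\}$-configuration through $\rho$, one should moreover show that $\rho(\mathfrak S(\{3,4,\ldots\}))$ acts with full homogeneity on each ``distance class'' of 2-cycles of $\iota$.

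The contradiction is then to be extracted from the interplay between this rigidity and the multiplicity of distances in $S$. Concretely, my proposed route is to pick commuting disjoint transpositions $\tau_1,\tau_2\in \mathfrak S_{(\infty)}$ whose images are arranged, via density of the action, to swap prescribed pairs $(x,y)$ at distance $a$ and $(z,w)$ at distance $b$ respectively; since $\rho(\tau_1\tau_2)$ lies in the distinct conjugacy class of images of double transpositions, its own 2-cycle distance spectrum is another invariant $D_2 \subseteq S\setminus \{0\}$, and consistency with both $D$ and $D_2$ on the four-point configuration $\{x,y,z,w\}$ forces metric relations that $\U_S$ can realise only when $|S|=2$. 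The main obstacle, and the technically delicate step, is making this combinatorial construction rigorous: one must establish the relative-homogeneity of the centralizer of $\iota$ without any continuity hypothesis on $\rho$, and then exhibit the explicit finite partial isometry witnessing the contradiction. This is where the specific rigidity of $\mathfrak S_{(\infty)}$---namely that a finitary permutation is completely determined by its cycle type on $\N$---enters decisively, in contrast with the more flexible locally finite group of Solecki and Rosendal which does lie in $\HUS$.
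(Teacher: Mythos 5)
Your sketch has two genuine gaps, and you flag both yourself. First, the ``relative homogeneity'' claim --- that $\rho(\mathfrak S(\{3,4,\ldots\}))$ acts with full homogeneity on each distance class of $2$-cycles of $\iota=\rho((1\,2))$ --- has no proof and no visible mechanism: homogeneity of $\rho$ only says that a finite partial isometry of $\U_S$ extends to \emph{some} element of $\mathfrak S_{(\infty)}$, with no control whatsoever on whether that element centralizes $(1\,2)$, and without continuity of $\rho$ or structural information about the point stabilizer you cannot run a back-and-forth inside the centralizer. Second, even granting that claim, the concluding contradiction is never exhibited. All images of transpositions are conjugate in $\rho(\mathfrak S_{(\infty)})$, hence share the same distance spectrum $D$, and a four-point configuration with $d(x,y)=a$, $d(z,w)=b$ and the remaining distances chosen freely is perfectly realizable in $\U_S$; nothing in the cycle-type invariants of a transposition versus a double transposition visibly obstructs it. Note also that countability of the set of images of transpositions is not in itself a contradiction, since that set is a conjugacy class of the countable subgroup $\rho(\mathfrak S_{(\infty)})$, not of $\Iso(\U_S)$. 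So as written this is a program, not a proof.

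For comparison, the paper's argument is structural rather than combinatorial. Since $\Iso(\U_S)\actson\U_S$ is transitive and primitive, the stabilizer $\Delta$ of a point under a dense copy of $\mathfrak S_{(\infty)}$ is a maximal proper subgroup of infinite index; such subgroups are classified (using Wielandt's theorem): setwise stabilizers of finite or of infinite co-infinite sets, or automorphism groups of partitions of $\N$ into blocks of a fixed size $k\geq 2$. One then identifies the corresponding Schlichting completions ($\mathfrak S_\infty$, the group $\mathfrak S^0(\N,X)$, or an almost full group $A[\mathcal P_k]$), and observes that in every case $\mathfrak S_{(\infty)}$ is \emph{normal} in its completion. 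Since the completion would have to be $\Iso(\U_S)$, this would make $\mathfrak S_{(\infty)}$ a countable nontrivial normal subgroup of $\Iso(\U_S)$; but for $|S|\geq 3$ every nonidentity element of $\Iso(\U_S)$ has an uncountable conjugacy class, a contradiction. The normality of $\mathfrak S_{(\infty)}$ in its completion is the decisive input, and your approach does not engage with it; if you want to salvage your route, the natural move is to first pin down the possible point stabilizers, which is exactly where the paper's classification enters.
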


Finally, consider the Urysohn space $(\mathbb U,d)$, which can be obtained as the metric completion of $\UQ$. Then $\Iso(\U)$ is a Polish group for the topology of pointwise convergence induced by the metric $d$, and a result of Cameron-Vershik yields that $\Iso(\UQ)$ is a dense subgroup of $\Iso(\U)$ \cite{zbMATH05068543}. Theorem \ref{thmi: homogeneous on UQ} thus yields that every free product of infinite countable groups can be densely embedded in $\Iso(\U)$. It is however unclear whether every countable dense subgroup of $\Iso(\mathbb U)$ belongs to the class $\HUQ$. The same question can be asked in the bounded case. 

The paper is organized as follows: we first introduce some basic facts about group actions on metric spaces, and develop the machinery we need to deal with the bounded case. We then establish the theorems mentioned above in that case. Once that work is completed, we turn to the unbounded case. While similar in spirit, this case requires some additional work, including the construction of some isometric actions of countable groups on countable metric spaces with interesting combinatorial properties. Finally, we prove that $\mathfrak S_{(\infty)}$ does not belong to $\HUS$ unless $|S|=2$.

\begin{ack}
	The authors are deeply grateful to the referee for their detailed reports
	which led to many substantial improvements of the paper.
	
	F.\ Le Maître and J.\ Melleray's research was partially supported by ANR project GAMME (ANR-14-CE25-0004) and
	ANR project AGRUME (ANR-17-CE40-0026).
\end{ack}

\section{Preliminaries}

\begin{definition}
Consider a countable set $S\subseteq [0,+\infty[$ containing $0$ and at least another element. We say that $S$ is an \textbf{unbounded distance set} if $S$ is a subsemigroup of the additive semigroup $[0,+\infty[$, meaning that 
for all $s,t\in S$, we have $s+t\in S$. 

We say that $S$ is a \textbf{bounded distance set} if $S$ has a maximum $M=\max(S)>0$ with
\[
\label{SetSM}
\forall s,t\in S \quad \min(s+t,M) \in S .
\]
\end{definition}

Given a bounded or unbounded distance set $S$, we say that a metric space $(X,d)$ is an \textbf{$S$-metric space} if the metric $d$ takes values in $S$.

\begin{example}\label{ex: graphs as metric spaces}
Every graph can be viewed as a $\{0,1,2\}$-metric space by letting $d(x,y)= 1$ when there is an edge between $x$ and $y$, $d(x,y)=0$ if $x=y$ and $d(x,y)=2$ if $x\neq y$ and there is no edge between $x$ and $y$. Conversely every $\{0,1,2\}$-metric space can be seen as a graph equipped with the above metric. 
\end{example}
A map between two metric spaces $(X,d_X)$ and $(Y,d_Y)$ is an \textbf{isometry} if it is surjective and for all $x_1,x_2\in X$ we have $d_Y(f(x_1),f(x_2))=d_X(x_1,x_2)$. We denote by $\Iso(X,d_X)$ or simply $\Iso(X)$ the group of isometries $X\to X$. 

A \textbf{partial isometry} between $X$ and $Y$ is a  map $\varphi \colon \dom \varphi\subseteq X\to \rng\varphi\subseteq Y$ which is an isometry for the restrictions of $d_X$ and $d_Y$ to the  sets $\dom\varphi$ and $\rng\varphi$ respectively. 

A partial isometry is \textbf{finite} if its domain is. We denote by $P_f(X)$ the set of finite partial isometries between $X$ and $X$.

Throughout the paper, we will use the following notation : by $A \Subset X$ we mean $A\subseteq X$ and $A$ is finite. 

For any bounded or unbounded distance set $S$, the \emph{$S$-Urysohn space $\mathbb U_S$} is the unique, up to isometry, countable $S$-valued metric space
which has the \textbf{extension property}: given a finite $S$-metric space $(X,d)$, if $(Y,d)$ is another finite $S$-valued metric space containing $(X,d)$, any isometric embedding $\rho: X\to \mathbb U_S$ extends to an isometric embedding $\tilde \rho: Y\to \mathbb U_S$.  

The \emph{back-and-forth argument} allows one to prove that if $(X,d_X)$ and $(Y,d_Y)$ are two countable $S$-metric spaces satisfying the extension property, then any finite partial isometry $\varphi:\dom \varphi\Subset X\to \rng\varphi\Subset Y$ extends to an isometry $X\to Y$. 
Applying this to the empty map, one gets that the $S$-Urysohn space is indeed unique up to isometry. 

Before tackling the case $S=\Q\cap[0,+\infty[$, we will be focusing on bounded metric spaces, since our constructions work best in that case. Thus we make the following

\begin{convention}
Until section \ref{sec: unbounded}, we assume that $S \subseteq [0,1]$ and $0,1 \in S$. We thus have a bounded distance set $S$ satisfying 
\begin{equation}\label{SetS}
\forall s,t\in S \quad \min(s+t,1) \in S .
\end{equation}
\end{convention}
\begin{remark}
We make the convention that our bounded $S$-metric spaces have diameter at most $1$ for the sake of simplicity. This does not affect the strength of our results because we are interested in isometry groups, which remain the same when multiplying the metric by a constant.
\end{remark}

\begin{remark}
The $\{0,1\}$-Urysohn space is the set $\N$ equipped with the discrete metric. The $\{0,\frac 12,1\}$-Urysohn space is the \emph{Random graph} $\mathcal R$ equipped with the metric discussed in Example \ref{ex: graphs as metric spaces}. The $\Q\cap[0,1]$-Urysohn space is called the \emph{rational Urysohn sphere}.
\end{remark}

\subsection{Finitely supported extensions and amalgamation}

We now recall the construction of amalgamations of metric spaces and recast the notion of finitely supported extension in this context. The material in this section is standard. 

Let $(Y,d)$ be an $S$-metric space, suppose $X$ is a subset of $Y$ and let $y\in Y$. A subset $F\subseteq X$ is called a \textbf{support} for $y$ over $X$ if  for all $x\in X$, we have 
$$d(x,y)=\min\left(1,\min_{f\in F}\left(d(x,f)+d(f,y)\right)\right).$$
Observe that every subset containing $F$ is then also a support  of $y$. 

We say that a point $y\in Y$ is \textbf{finitely supported} over $X$ if it has a finite support. 
Note that every element $x$ of $X$ is finitely supported with support  $\{x\}$. 


We say that a metric space $(Y,d)$ is a \textbf{finitely supported extension} of a subset $X$ if all its points are finitely supported over $X$.
\begin{definition}
Suppose $(X_1,d_1)$ and $(X_2,d_2)$ are two $S$-metric spaces, that $A=X_1\cap X_2$ and $d_{1\restriction A}=d_{2\restriction A}$. Then we define the \textbf{metric amalgam} of $(X_1,d_1)$ and $(X_2,d_2)$ over $A$ as the set $X_1\cup X_2$ equipped with the metric $d$ which restricts to $d_1$ on $X_1$ and $d_2$ and $X_2$ and such that for all $x_1\in X_1$ and all $x_2\in X_2$ we have
$$d(x_1,x_2)=\min(1,\inf_{a\in A}(d_1(x_1,a)+d_2(a,x_2))).$$
We denote the metric amalgam by $(X_1,d_1)*_A(X_2,d_2)$. Note that we allow amalgamation over the empty set, in which case  $d(x_1,x_2)=1$ for all $x_1\in X_1$ and all $x_2\in X_2$. The fact that we have a well-behaved way of amalgamating $S$-metric spaces over the emptyset is a notable difference between the bounded case and the unbounded case. 
\end{definition}
\begin{remark}
If $X\subseteq (Y,d)$, a point $y\in Y$ is finitely supported over $X$ with support  $F\Subset X$ if and only if $X\cup \{y\}$ is the metric amalgam of $F\cup\{y\}$ and $X$ over $F$. 
\end{remark}
It is not clear a priori when the metric amalgam of two $S$-metric spaces is an $S$-metric space. However, it is  the case when $X_1$ and $X_2$ are finitely supported over $A$.

\begin{proposition}\label{prop: metric amalgam}
Let $(X_1,d_1)$ and $(X_2,d_2)$ be two $S$-metric spaces which are finitely supported extensions of a common metric subspace $A=X_1\cap X_2$. Then the metric amalgam $(X_1,d_1)*_A(X_2,d_2)$ is an $S$-metric space. 

Moreover if $F_1$ is a support  of $x_1\in X_1$ and $F_2$ is a support  of $x_2\in X_2$, and $\inf_{a\in A}d_1(x_1,a)+d_2(a,x_2)<1$, then this infimum is a minimum which is attained both on $F_1$ and $F_2$. 
\end{proposition}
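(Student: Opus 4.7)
The plan is to prove the ``moreover'' clause first, because once we know that the infimum defining $d(x_1,x_2)$ is a minimum attained on the support $F_1$ (and on $F_2$), both the $S$-valuedness and the triangle inequality follow by short case analyses.

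First I would fix $x_1 \in X_1$ and $x_2 \in X_2$ with supports $F_1,F_2 \Subset A$, and show the inequality
\[
d(x_1,x_2) \;\geq\; \min\Bigl(1,\min_{f_1 \in F_1}\bigl(d_1(x_1,f_1)+d_2(f_1,x_2)\bigr)\Bigr).
\]
For this, take any $a\in A$ and split into two subcases according to whether the definition of ``$F_1$ supports $x_1$'' gives $d_1(x_1,a)=1$ or $d_1(x_1,a)=\min_{f_1\in F_1}\bigl(d_1(x_1,f_1)+d_1(f_1,a)\bigr)$. In the first subcase $d_1(x_1,a)+d_2(a,x_2)\geq 1$, which already forces the amalgam distance to be $\leq 1$ anyway. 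In the second, picking an $f_1 \in F_1$ realizing the minimum and using $d_1(f_1,a)=d_2(f_1,a)$ together with the triangle inequality inside $(X_2,d_2)$ yields $d_1(x_1,a)+d_2(a,x_2)\geq d_1(x_1,f_1)+d_2(f_1,x_2)$. The reverse inequality is immediate since every $f_1\in F_1$ lies in $A$ and hence contributes to the infimum. This gives equality and shows the minimum is attained on $F_1$; a symmetric argument works for $F_2$.

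From this identity, $S$-valuedness is immediate: choosing $f_1\in F_1$ realizing the minimum, $d(x_1,x_2)=\min\bigl(1,d_1(x_1,f_1)+d_2(f_1,x_2)\bigr)$, and both summands lie in $S$, so the property \eqref{SetS} of bounded distance sets yields $d(x_1,x_2)\in S$.

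The main remaining step is the triangle inequality $d(x,z)\leq d(x,y)+d(y,z)$, which I would verify by case analysis on the location of $x,y,z$ in $X_1$ versus $X_2$. The cases where all three are in one $X_i$ are trivial. The typical mixed case is $x,z \in X_1$, $y\in X_2$: letting $F_y$ be a finite support of $y$ over $A$, choose $f,f'\in F_y$ realizing $d(x,y)$ and $d(y,z)$ respectively via the minimum clause just proved. If either distance equals $1$ the bound is automatic; otherwise use $d_2(f,y)+d_2(y,f')\geq d_2(f,f')=d_1(f,f')$ and the triangle inequality in $(X_1,d_1)$ to obtain
\[
d(x,y)+d(y,z) \;=\; d_1(x,f)+d_2(f,y)+d_2(y,f')+d_1(f',z)\;\geq\; d_1(x,z) \;=\; d(x,z).
\]
The remaining configurations (such as $x\in X_1$, $y\in X_1$, $z\in X_2$, or $x\in X_1$, $y,z\in X_2$) are handled similarly by selecting an appropriate support point in $A$ for whichever endpoint(s) cross between $X_1$ and $X_2$; the hardest-looking case, where $x,y,z$ alternate between $X_1$ and $X_2$, actually never arises with three distinct sides. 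The only genuine obstacle is bookkeeping the $\min(1,\cdot)$ cut-off at each step, and here the fact that the amalgam distance is bounded by $1$ by construction makes any summand that saturates to $1$ harmless.
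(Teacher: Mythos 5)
Your proposal is correct and follows essentially the same route as the paper: the key step — realizing $d_1(x_1,a)$ through a point of the support $F_1$, using that $d_1=d_2$ on $A$ and the triangle inequality in $(X_2,d_2)$ — is exactly the paper's argument, and $S$-valuedness is then deduced from \eqref{SetS} just as in the paper. You merely spell out two things the paper leaves implicit, namely the handling of the $\min(1,\cdot)$ cut-off and the case-by-case verification of the triangle inequality for the amalgam, so there is nothing to object to.
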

\begin{proof}
Pick $x_1 \in X_1$, $x_2 \in X_2$. If $d(x_1,x_2)=1$ then this distance belongs to $S$, so we only have to deal with the case when

$$d(x_1,x_2)= \inf_{a\in A}d_1(x_1,a)+d_2(a,x_2)<1$$

By symmetry, we only need to check that if $F_1\Subset A$ is a support  of $x_1$, then  $\inf_{a\in A}d(x_1,a)+d_2(a,x_2)$ is attained on $F_1$. 

By definition, for each $a\in A$ we have $d(x_1,a)=\min_{f\in F_1}d(x_1,f)+d(f,a)$. So for each $a\in A$ there is $f\in F_1$ such that
$$d_1(x_1,a)+d_2(a,x_2)=d_1(x_1,f)+d_1(f,a)+d_2(a,x_2)$$
But $d_1$ and $d_2$ coincide on $A$ so $d_1(f,a)+d_2(a,x_2)=d_2(f,a)+d_2(a,x_2)\geq d_2(f,x_2)$. We conclude $d_1(x_1,a)+d_2(a,x_2)\geq d_1(x_1,f)+d_2(f,x_2)$, so our infimum is indeed a minimum attained on $F_1$.
\end{proof}

We will also make use of the following fact.

\begin{lemma}\label{lem: support  at distance 1 implies distance 1}
Suppose $(Y,d)$ is an $S$-metric space, $X\subseteq Y$ and $y_1$, $y_2\in Y$ are finitely supported over $X$, with supports $F_1,F_2$, and $X \cup \{y_1,y_2\}$ is the metric amalgam of $X\cup\{y_1\}$ and $X \cup \{y_2\}$ over $X$. 
Suppose moreover that $d(f_1,f_2)=1$ for all $f_1\in F_1$ and all $f_2\in F_2$. 
Then $d(y_1,y_2)=1$.
\end{lemma}
\begin{proof}
This follows from Proposition \ref{prop: metric amalgam}: assume $d(y_1,y_2)< 1$. Then there exists $f_1 \in F_1$ such that $d(y_1,y_2)=d(y_1,f_1)+ d(f_1,y_2)$, and by definition of a support there exists $f_2 \in F_2$ such that $d(f_1,y_2)=d(f_1,f_2)+d(f_2,y_2)$. Hence $d(f_1,f_2)<1$, contradicting our assumption.
\end{proof}

The construction of metric amalgams makes sense with an arbitrary number of factors: when $(X_i,d_i)_{i\in I}$ is a family of $S$-metric spaces which are finitely supported over $A$ and for all $i\neq j$ we have  $A=X_i\cap X_j$  and $d_{i\restriction A}=d_{j\restriction A}$ then we can form the metric amalgam of $(X_i,d_i)_{i\in I}$ over $A$ as the set $\bigcup_{i\in I} X_i$ equipped with the metric $d$ which coincides with $d_i$ when restricted to $X_i$, and such that for all $i\neq j$  and all $x_i\in X_i$ and $x_j\in X_j$ we have 
$$d(x_i,x_j)=\min(1,\inf_{a\in A}(d_i(x_i,a)+d_j(a,x_j))).$$
Observe that when restricted to $X_i\cup X_j$, the metric is the one of the metric amalgam of $X_i$ and $X_j$ over $A$. Proposition \ref{prop: metric amalgam} has then the following immediate corollary.

\begin{corollary}\label{cor: amalgam of finitely supported S metric is S metric}
Let $(X_i,d_i)_{i\in I}$ be a family of $S$-metric spaces  such that for all $i\neq j$ we have  $A=X_i\cap X_j$  and $d_{i\restriction A}=d_{j\restriction A}$. Suppose that each $X_i$ is finitely supported over $A$. Then the metric amalgam of $(X_i,d_i)_{i\in I}$ over $A$ is an $S$-metric space. \qed
\end{corollary}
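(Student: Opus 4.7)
The plan is to reduce the corollary to the two-factor statement of the preceding proposition. By the very definition of the multi-factor amalgam, for every pair $i\neq j$ the restriction of the amalgam metric $d$ to $X_i\cup X_j$ agrees with the two-factor amalgam metric of $(X_i,d_i)$ and $(X_j,d_j)$ over $A$. Hence, by the proposition, $d$ is already a well-defined $S$-valued function on every such pair, is symmetric, non-negative, and vanishes only on the diagonal. What remains is to verify the triangle inequality for three points living in three pairwise distinct factors $X_i,X_j,X_k$; all other configurations are directly covered by the two-factor proposition.

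So take $x_i\in X_i$, $x_j\in X_j$, $x_k\in X_k$ with $i,j,k$ pairwise distinct. If $d(x_i,x_j)+d(x_j,x_k)\geq 1$ there is nothing to check, since $d(x_i,x_k)\leq 1$. Otherwise the proposition guarantees that the infima defining $d(x_i,x_j)$ and $d(x_j,x_k)$ are actually attained at some $a\in A$ and $b\in A$ respectively, giving
\[
d(x_i,x_j)=d_i(x_i,a)+d_j(a,x_j),\qquad d(x_j,x_k)=d_j(x_j,b)+d_k(b,x_k).
\]
Since $a,b\in A$ and the three metrics coincide on $A$, the triangle inequality for $d_j$ applied to $a,x_j,b$ gives $d_j(a,x_j)+d_j(x_j,b)\geq d_j(a,b)=d_i(a,b)$, and then the triangle inequality for $d_i$ applied to $x_i,a,b$ yields $d_i(x_i,a)+d_i(a,b)\geq d_i(x_i,b)$. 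Plugging this chain of inequalities into the formula for $d(x_i,x_k)$ evaluated at $b\in A$ concludes.

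For the fact that $d$ takes values in $S$, one only needs to invoke what was already established in the proposition: on each pair $X_i\cup X_j$ the distance is of the form $\min(1,d_i(x_i,f)+d_j(f,x_j))$ for some $f$ in a finite support of $x_i$ or $x_j$, and the defining property \eqref{SetS} of $S$ forces this value to lie in $S$.

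I do not anticipate a serious obstacle: the whole argument is a bookkeeping exercise, since the only genuinely new content beyond the two-factor proposition is the three-factor triangle inequality, and that in turn is handled simply by routing the argument through the common subspace $A$ where all the metrics agree. This matches the fact that the authors close the statement with the end-of-proof symbol, treating the corollary as an immediate consequence of the preceding result.
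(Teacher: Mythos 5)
Your proposal is correct and follows essentially the same route as the paper, which states the corollary as immediate from the two-factor proposition together with the observation that the amalgam metric restricts on each $X_i\cup X_j$ to the two-factor amalgam. The only point the paper leaves implicit --- the triangle inequality for three points in three distinct factors --- is exactly what you verify, and you do it the intended way, using the attainment of the infima on supports (the ``Moreover'' clause of the proposition) and routing through $A$ where the metrics agree.
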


\subsection{$S$-Urysohn spaces and one-point extensions} \label{sec: S urysohn and one point extensions}


Let us now see how to build $\mathbb U_S$, i.e. how to build a countable $S$-metric space with the extension property using a slight variation of  Kat\v{e}tov's ideas \cite{katetov}.

Given an  $S$-metric space $(X,d)$ and another $S$-metric space $(Y,d)$ containing $X$, we say that $Y$ is a \textbf{one-point extension} of $X$ if $\abs{Y\setminus X}=1$. A straightforward induction yields that $\mathbb U_S$ is characterized by the following version of the extension property: for every one-point extension $(Y,d)$ of every finite $S$-metric space $(X,d)$, any isometric embedding $\rho \colon X\to \US$ extends to an isometric embedding $\tilde \rho\colon Y\to \US$.

One-point extensions are completely determined by the distance to the added point. Such distance functions can be  characterized as  the functions $f\colon X\to S$ satisfying 
\begin{equation}\label{Katetov condition}
\vert f(x_1)-f(x_2)\vert\leq d(x_1,x_2)\leq f(x_1)+f(x_2)\quad\text{for all }x_1,x_2\in X.
\end{equation}
The functions $f:X\to S$ satisfying the above condition are thus also called one-point extensions of $X$, and we will often switch between the two point of views. 

\begin{definition}\label{def: space of f.s. one point extensions}
Whenever $(X,d)$ is an $S$-metric space, we denote by $E_S(X)$ the space of all \emph{finitely supported} one-point extensions of $X$. 
By definition these are the functions $f\colon X\to S$ satisfying \eqref{Katetov condition} such that there is $F\Subset X$ satisfying for all $x\in X$,
\[
f(x)=\min\left(1,\min_{y\in F} (f(y)+d(y,x))\right).
\]
Such a subset $F$ is called a \textit{support} for $f$. Supports are not unique: any finite set containing a support of $f$ is a support of $f$.
\end{definition}

Note that if $F\Subset X$, any one-point extension $f$ of $(F,d)$ extends to a finitely supported one-point extension $\tilde f$ of $X$ defined by 
$$\tilde f(x)=\min\left(1,\min_{y\in F}\left( f(y)+d(y,x)\right)\right).$$
This extension is called the \textbf{Kat\v{e}tov extension} of $f$, and may be viewed as the metric amalgam of $F\cup\{f\}$ and $X$ over $F$. 

A key idea of the \emph{Kat\v{e}tov construction} of the $S$-Urysohn space is that $X$ embeds isometrically into $E_S(X)$ as the space of trivial one-point extensions, i.e. extensions of the form $\hat x=d(x,\cdot)$. Moreover we have $\norm{f-\hat x}_\infty=f(x)$, so $E_S(X)$ contains as a metric subspace every finitely supported one-point extension of $X$, and hence every one-point extension of every finite subset of $X$. Also note that $E_S(X)$ is countable if $X$ was countable. 

However the metric induced by $\norm{\cdot}_\infty$ on $E_S(X)$ is not the right one for us because we want the one point extensions to be as far from each other as possible. The reason for this will become apparent when we construct for every countable group an action on the $S$-Urysohn space which is as free as possible.

So for $f,g\in E_S(X)$ we define a new metric still denoted by $d$, by setting 
\[
d(f,g):=\left\{\begin{array}{lcl}\min\left(1,\displaystyle\inf_{x\in X}\left(f(x)+g(x)\right)\right)&\text{if}&f\neq g,\\0&\text{if}&f=g.\end{array}\right.
\]
Observe that $d$ is the metric obtained by amalgamating all the finitely supported one-point extensions of $(X,d)$ over $X$, which as a set is still $E_S(X)$. In particular, $d$ is indeed a metric and takes values in $S$ by Corollary \ref{cor: amalgam of finitely supported S metric is S metric}. By construction $(X,d)$ is a metric subspace of $(E_S(X),d)$. 

One way of constructing the $S$-Urysohn space is now to start with an arbitrary countable metric space $(X,d)$, let $X_0=X$ and then define by induction $X_{n+1}=E_S(X_n)$. The metric space $\bigcup_{n\in\N} X_n$ then satisfies the one-point extension property by construction, and hence it is isometric to the $S$-Urysohn space $\mathbb U_S$. Moreover the construction is equivariant in the sense that if $\Gamma$ acts on $X$ by isometries, then the action naturally extends to $E_S(X)$ so that in the end we get a $\Gamma$-action by isometries on $\mathbb U_S$. In order to make the action as free as possible, we will however need to modify further this construction. Let us start by making clear what we mean by as free as possible.

\subsection{Freeness notions for actions on metric spaces}
Let $\Gamma$ be a countable group. If $(X,d)$ is a metric space, we write $\Gamma\curvearrowright (X,d)$ if $\Gamma$ acts on $X$ by isometries. A $\Gamma$-action by isometries on $X$ is thus a group homomorphism $\Gamma\to \Iso(X,d)$.

 We will need the following three notions for an action $\Gamma\curvearrowright (X,d)$ where $X$ is an $S$-metric space.


\begin{definition} An action $\Gamma\curvearrowright (X,d)$ is \textbf{mixing} if for every $x,y\in X$ and for all but finitely many $\gamma\in\Gamma$ we have $d(\gamma x,y)=1$.
\end{definition}

Observe that if $\Gamma\curvearrowright (X,d)$ is mixing, then for any finite subset $F\subseteq X$, for all but finitely many $\gamma\in\Gamma$ we have $d(\gamma x,y)=1$ for all $x,y\in F$.
Also note that the action of any finite group is mixing. Finally, if $\Gamma\curvearrowright (X,d)$ is mixing then the restriction of the action to every subgroup $\Lambda\leq\Gamma$ is mixing. 

%


\begin{definition}
An action $\Gamma\curvearrowright (X,d)$ is \textbf{strongly free} if, for all $\gamma\in\Gamma\setminus\{1\}$ and all $x\in X$, one has $d(\gamma x,x)=1$.
\end{definition}
Note that any strongly free action is obviously free. 
\begin{definition} 
Let $\Sigma<\Gamma$ be a subgroup. We say that $\Sigma$ is \textbf{highy core-free} for the action $\Gamma\curvearrowright (X,d)$ if for all 
$F\Subset X$, there exists $g\in\Gamma$ such that $d(gx,u)=1=d(\sigma gx,gy)$ for all $x,y\in F$, $u\in\Sigma F$ and $\sigma\in\Sigma\setminus\{1\}$. 
\end{definition}

\begin{remark}
The fact that $d(\sigma gx,gy)=1$ for all $x,y\in F,\sigma\in\Sigma\setminus\{1\}$ implies that the $\Sigma$-action on $\Sigma gF$ is conjugate to the $\Sigma$-action on $\Sigma\times F$ given by $\sigma(\sigma',x)=(\sigma\sigma',x)$ where we put on $\Sigma\times F$ the metric $\tilde d$ given by 
\[
\tilde d((\sigma,x),(\sigma',y))=
\left\{\begin{array}{cc}
d(x,y) & \text{if } \sigma=\sigma' \\
1 & \text{else.}
\end{array}\right.
\]
\end{remark}

Note that if $\Sigma$ is highly core-free for an action $\Gamma\curvearrowright (X,d)$ then any subgroup of $\Sigma$ is highly core-free for $\Gamma\curvearrowright (X,d)$. 

Our terminology extends that from \cite{zbMATH06503094}: it is easy to check that, if $d$ is the discrete metric then $\Sigma$ is highly core-free for $\Gamma\curvearrowright (X,d)$ in the sense above if and only if $\Sigma$ is strongly highly core-free for the action $\Gamma\curvearrowright X$ on the \textit{set} $X$ in the sense of \cite[Definition 1.7]{zbMATH06503094}. If moreover the action $\Gamma\curvearrowright X$ is free, then the previous conditions are equivalent to $\Sigma$ being highly-core-free in $\Gamma$ (see \cite[Definition 1.1 and Lemma 1.6]{zbMATH06503094}). As mentioned in the introduction, easy examples of highly core-free subgroups are the trivial subgroup of an infinite group and any finite subgroup of an i.c.c. group. 
A less immediate example is the fact that in the free group over $2$ generators 
$\mathbb F_2=\la a,b\ra$, the cyclic subgroup generated by the commutator $[a,b]$ is highly core-free. This fact  will allow us to show that surface groups arise as dense subgroups in the isometry group of any bounded $S$-Urysohn space, following the same strategy as in \cite[Ex. 5.1]{zbMATH06503094}.
We refer to \cite[Ex. 1.11]{zbMATH06503094} for proofs and for more examples of highly core-free subgroups.

Let us now see how these notions behave with respect to $E_S(X)$ (see Definition \ref{def: space of f.s. one point extensions}). 
Note that we have an injective group homomorphism $\Iso(X,d)\rightarrow\Iso(E_S(X),d)$ defined by $\varphi\mapsto(f\mapsto\varphi(f))$, where $\varphi(f)(x)=f(\varphi^{-1}(x))$. In particular, from any action $\Gamma\curvearrowright X$ we get an action $\Gamma\curvearrowright E_S(X)$ defined by $(\gamma f)(x)=f(\gamma^{-1} x)$ for $\gamma\in\Gamma$ and $f\in E_S(X)$.

\begin{proposition}\label{inductionstep}
Let $\Gamma\curvearrowright(X,d)$ be an action by isometries. The following holds.
\begin{enumerate}
\item\label{inductionstep1} If $\Gamma\curvearrowright (X,d)$ is faithful then $\Gamma\curvearrowright E_S(X)$ is faithful.
\item\label{inductionstep2} If $\Gamma\curvearrowright (X,d)$ is mixing then $\Gamma\curvearrowright E_S(X)$ is mixing.
\item\label{inductionstep3} If $\Sigma<\Gamma$ is highly core-free for $\Gamma\curvearrowright (X,d)$ then it is also highly core-free for $\Gamma\curvearrowright E_S(X)$.
\end{enumerate}
\end{proposition}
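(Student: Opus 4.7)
The plan is to use the isometric embedding $X \hookrightarrow E_S(X)$ sending $x$ to $\hat x = d(x,\cdot)$, and to verify that it is $\Gamma$-equivariant. The computation $(\gamma\hat x)(y) = \hat x(\gamma^{-1}y) = d(x,\gamma^{-1}y) = d(\gamma x,y) = \widehat{\gamma x}(y)$ gives the equivariance, and using $S\subseteq[0,1]$ together with the triangle inequality one sees $d(\hat x,\hat y) = \min(1,d(x,y)) = d(x,y)$. Part \eqref{inductionstep1} is then immediate: any $\gamma\in\Gamma$ acting trivially on $E_S(X)$ fixes every $\hat x$, hence acts trivially on $X$.

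For parts \eqref{inductionstep2} and \eqref{inductionstep3}, the key technical observation will be that a support $F\Subset X$ of an element $f\in E_S(X)$ gives rise to the support $\hat F \Subset \hat X \subseteq E_S(X)$ of $f$ \emph{viewed inside $E_S(X)$}. Indeed, a direct calculation shows $d(f,\hat y)=f(y)$ for all $y\in X$, so the defining relation $f(z) = \min(1,\min_{y\in F}(f(y)+d(y,z)))$ translates into $d(f,\hat z) = \min(1,\min_{\hat y\in \hat F}(d(f,\hat y)+d(\hat y,\hat z)))$. Once this is established, Lemma \ref{lem: support  at distance 1 implies distance 1}, applied inside $E_S(X)$, reduces statements about distances between one-point extensions to statements about distances between points of their supports in $X$.

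For \eqref{inductionstep2}, given $f,g\in E_S(X)$ with supports $F_f, F_g\Subset X$, the mixing assumption on $\Gamma\curvearrowright X$ provides a cofinite set of $\gamma\in\Gamma$ such that $d(\gamma y_1,y_2)=1$ for every $y_1\in F_f$ and $y_2\in F_g$; since $\gamma F_f$ is a support of $\gamma f$, the preceding lemma yields $d(\gamma f, g)=1$.

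For \eqref{inductionstep3}, given a finite $F'\Subset E_S(X)$, I set $F = \bigcup_{f\in F'} F_f$ where each $F_f$ is a chosen support of $f$, and apply the highly core-free hypothesis for $\Gamma\curvearrowright X$ to produce $g\in\Gamma$ such that $d(gx,u')=1=d(\sigma gx,gy)$ for all $x,y\in F$, $u'\in\Sigma F$ and $\sigma\in\Sigma\setminus\{1\}$. Then for any $f,f'\in F'$ and $\sigma'\in\Sigma$, the supports of $gf$ and $\sigma' f'$ are $gF_f$ and $\sigma' F_{f'}\subseteq\Sigma F$ respectively, and the supports of $\sigma g f$ and $gf'$ are $\sigma g F_f$ and $gF_{f'}$; in each case pairwise distances between support points are all equal to $1$ by choice of $g$, and Lemma \ref{lem: support  at distance 1 implies distance 1} promotes this to $d(gf,u)=1=d(\sigma gf, gf')$ in $E_S(X)$. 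The only subtlety to be careful about is verifying that $\sigma' F_{f'}$ really is a support of $\sigma' f'$, which follows by straightforward transport along the isometry $\sigma'$; this, and the transfer of supports from $X$ to $\hat X$, is the main technical point, but both are routine.
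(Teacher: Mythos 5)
Your proposal is correct and follows essentially the same route as the paper: use the $\Gamma$-equivariant isometric embedding $X\subseteq E_S(X)$ for faithfulness, and for mixing and high core-freeness transfer supports of the Kat\v{e}tov functions to supports of the corresponding points of $E_S(X)$ over $X$ and invoke Lemma \ref{lem: support  at distance 1 implies distance 1}. The only difference is that you make explicit (via $d(f,\hat y)=f(y)$) the identification of the two notions of support, which the paper leaves implicit through the amalgam description of the metric on $E_S(X)$.
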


\begin{proof}

\eqref{inductionstep1} is obvious since we have a $\Gamma$-equivariant inclusion $X\subseteq E_S(X)$.

\eqref{inductionstep2}. Let $f,g\in E_S(X)$, let $F$ be a common finite support for $f$ and $g$. Since $\Gamma\curvearrowright X$ is mixing, for all but finitely many $\gamma\in \Gamma$ we have $d(\gamma x,y)=1$ for all $x,y\in F$. Note $\gamma F$ is a support  for  $\gamma f$. It now follows from Lemma \ref{lem: support  at distance 1 implies distance 1} that $d(\gamma f,g)=1$ for all but finitely many $\gamma\in \Gamma$. 

\eqref{inductionstep3}. Let $F\subseteq E_S(X)$ be a finite subset. For each $f\in F$ let $Y_f\Subset X$ be a support for $f$ and define $Y=\cup_{f\in F} Y_f\subseteq X$. If $\gamma\in\Gamma$ is such that $d(\gamma x,u)=1$ and $d(\sigma\gamma x,\gamma y)=1$ for all $x,y\in Y$, $u\in\Sigma Y$ and $\sigma\in\Sigma\setminus\{1\}$ then it follows from Lemma \ref{lem: support  at distance 1 implies distance 1} that $d(\gamma f,h)=1$ and $d(\sigma \gamma f,\gamma g)=1$ for all $f,g\in F$, $h\in\Sigma F$ and $\sigma\in\Sigma\setminus\{1\}$.
\end{proof}

Observe that we did not mention strong freeness, which indeed does not carry over to $E_S(X)$. For instance, if $X$ is finite and we let $f\in E_S(X)$ be defined by $f(x)=1$ for all $x\in X$, then $f$ is actually fixed by the $\Gamma$-action. One can tweak the above construction of $\U_S$ to get rid of this obstruction, as we will see in section \ref{sec: strongly free action on Urysohn}.

\subsection{Homogeneous actions on countable metric spaces}
When all the finite partial isometries of a metric space $(X,d)$ extend to isometries, we say that $X$ is \textbf{homogeneous}. By a back-and-forth argument,  $\U_S$ is a homogeneous metric space.  

The isometry group of the $S$-Urysohn space $\mathbb U_S$ is endowed with the topology of pointwise convergence for the discrete topology on $\mathbb U_S$. It then becomes a Polish group, and our main motivation is to understand which countable groups arise as dense subgroups of $\Iso(\mathbb U_S)$. Recall from the introduction that this is the same as asking which countable groups admit a \emph{faithful} action by isometries on $\mathbb U_S$ such that every finite partial isometry of $\mathbb U_S$ extends to an element of $\Gamma$. Such actions are called \textbf{homogeneous actions}, and we denote by $\HUS$ the class of countable groups which admit a faithful homogeneous action on $\US$.


One can use the methods of K. Tent and M. Ziegler \cite{MR3104993} (see also \cite{MR3022717} for considerations on the unbounded case) to show that the isometry group of any countable $S$-Uryshohn space is \emph{topologically simple} (every non-trivial normal subgroup is dense). This yields via the following proposition some restrictions on the class $\HUS$.
  
\begin{proposition}\label{prop: conditions dense subgroups of topo simple}
Let $G$ be a nonabelian infinite topologically simple topological group, and suppose $\Gamma$ is a dense subgroup. Then every nontrivial normal subgroup of $\Gamma$ is dense in $G$ and every finite index subgroup of $\Gamma$ is dense in $G$. Moreover,
$\Gamma$ is i.c.c. and non-solvable.
\end{proposition}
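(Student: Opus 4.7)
The plan splits into two main structural results about subgroups of $G$, from which the i.c.c. and non-solvability properties follow.

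Step 1 (normal subgroups): To show every nontrivial normal subgroup $N \triangleleft \Gamma$ is dense, I would look at the closure $\overline{N}$ in $G$ and prove it is normal in $G$ itself. Since $\Gamma$ normalizes $N$ and conjugation is a continuous operation, $\Gamma$ also normalizes the closed set $\overline{N}$. Then, given any $g \in G$ and any net $\gamma_i \in \Gamma$ with $\gamma_i \to g$, continuity of the conjugation map $x \mapsto gxg^{-1}$ together with $\gamma_i \overline{N} \gamma_i^{-1} = \overline{N}$ yields $g \overline{N} g^{-1} = \overline{N}$. So $\overline{N}$ is a nontrivial closed normal subgroup of the topologically simple group $G$, forcing $\overline{N} = G$.

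Step 2 (finite index subgroups): Given $H \leq \Gamma$ of finite index, I would form its normal core $N = \bigcap_{\gamma \in \Gamma} \gamma H \gamma^{-1}$, which is a finite intersection of distinct conjugates, hence normal in $\Gamma$ and of finite index. If $N$ were trivial then $\Gamma$ would be finite; but a finite subgroup of a Hausdorff topological group is closed, so density of $\Gamma$ would force $G$ itself to be finite, contradicting our hypothesis. Thus $N$ is nontrivial, hence dense by Step 1, and the inclusion $N \subseteq H$ makes $H$ dense as well.

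Step 3 (i.c.c. and non-solvability): Suppose some $\gamma \in \Gamma \setminus \{1\}$ had finite conjugacy class; then its $\Gamma$-centralizer $C_\Gamma(\gamma)$ has finite index in $\Gamma$ and is therefore dense in $G$ by Step 2. The $G$-centralizer $C_G(\gamma)$ is closed (being the preimage of $\{\gamma\}$ under the continuous map $g \mapsto g\gamma g^{-1}$), so it must equal $G$, placing $\gamma$ in the center $Z(G)$. Since $Z(G)$ is a closed normal subgroup of a nonabelian topologically simple $G$, we must have $Z(G) = \{1\}$, contradicting $\gamma \neq 1$. For non-solvability, if $\Gamma$ were solvable then the last nontrivial term $\Gamma^{(k)}$ of its derived series would be abelian and characteristic in $\Gamma$, hence a nontrivial normal subgroup, hence dense by Step 1; but the closure of an abelian subgroup in a Hausdorff topological group is abelian, so $G$ itself would be abelian, contradicting our hypothesis.

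The main technical point is Step 1, where one has to verify that closure preserves normality under the ambient $G$-action via a net-continuity argument; once this is in hand, the remaining steps are formal consequences together with standard facts about Hausdorff topological groups, and I do not foresee any significant obstacle.
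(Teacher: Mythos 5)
Your proof is correct and takes essentially the same route as the paper: the closure of a nontrivial normal subgroup of $\Gamma$ is normal in $G$ hence dense, finite-index subgroups contain a nontrivial finite-index normal core, the centralizer of an element with finite conjugacy class is dense, and solvability is ruled out because a dense subgroup of the nonabelian $G$ cannot be abelian. The only cosmetic differences are in the last two steps, where you conclude i.c.c.\ via the closed centralizer $C_G(\gamma)$ and triviality of $Z(G)$ (the paper instead uses that dense subgroups have trivial center), and you invoke the last nontrivial term of the derived series rather than the paper's induction along the derived series---both variants are equivalent.
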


\begin{proof}

The first statement follows from the fact that for any normal subgroup $N\leq \Gamma$ its closure $\overline{N}$ is, by density of $\Gamma$, normal in $G$. So either $N=\{1\}$ or $N$ is dense in $G$. Moreover, any finite index subgroup $\Sigma\leq\Gamma$ has a finite index subgroup $N\leq\Sigma$ which is normal in $\Gamma$ and non-trivial since it has to be infinite. It follows that any finite index subgroup of $\Gamma$ is again dense in $G$. 

To prove the second statement, first observe that the center $Z(\Gamma)$ of a dense subgroup $\Gamma\leq G$ must be trivial. Indeed, $Z(\Gamma)$ is a normal subgroup of $\Gamma$ which is abelian, but $G$ is not, so the center of any  dense subgroup of $G$ must be trivial. 

Now suppose that $\gamma\in \Gamma$ has a finite conjugacy class and denote by $C(\gamma):=\{\gamma'\in \Gamma \colon \gamma\gamma'=\gamma'\gamma\}$ its centraliser. By definition $C(\gamma)$ has finite index in $\Gamma$ so by the first statement it is dense and thus has trivial center, from which we deduce that $\gamma$ is trivial since $\gamma$ belongs to the center of $C(\gamma)$. We conclude $\Gamma$ is i.c.c.

Finally, for a group $\Lambda$, let us denote by $D(\Lambda)$ the subgroup of $\Lambda$ generated by all commutators. Recall that $D(\Lambda)$ is normal in $\Lambda$. Now define $\Gamma_0:=\Gamma$ and, for $n\geq 0$, $\Gamma_{n+1}:=D(\Gamma_n)$. We show by induction that for all $n\geq 0$ the subgroup $\Gamma_n$ is dense in $G$, which implies that $\Gamma$ is not solvable. The case $n=0$ is true by assumption. Suppose $\Gamma_n$ is dense in $G$, then $\Gamma_{n+1}$ is either trivial or dense in $G$ since it is normal in $\Gamma_n$. However, $\Gamma_{n+1}$ is not trivial since $\Gamma_n$, being dense in $G$, can not be abelian.
\end{proof}

\begin{corollary}\label{cor: restrictions HUS}
If $\Gamma\in\HUS$ for some distance set $S$  then $\Gamma$ is i.c.c. and non solvable. 
Furthermore, every finite index subgroup of $\Gamma$ belongs to $\HUS$. 
\qed
\end{corollary}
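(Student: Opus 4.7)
The plan is to deduce the corollary directly from Proposition \ref{prop: conditions dense subgroups of topo simple} applied to $G = \Iso(\US)$, using the translation between dense subgroups of $\Iso(\US)$ and faithful homogeneous actions on $\US$.

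First, I would observe that $G := \Iso(\US)$ satisfies the hypotheses of the proposition: $\US$ is infinite and has more than two points (assuming $|S| \geq 2$, which is built into the definition of a distance set), so $G$ is infinite; homogeneity of $\US$ yields abundant isometries that do not commute, so $G$ is nonabelian; and, as mentioned just before Proposition \ref{prop: conditions dense subgroups of topo simple}, the Tent--Ziegler technique applies to show that $G$ is topologically simple.

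Now suppose $\Gamma \in \HUS$, so by definition $\Gamma$ admits a faithful homogeneous isometric action on $\US$. Homogeneity of $\US$ combined with homogeneity of the action means that the image of $\Gamma$ in $G$ meets every basic open neighborhood in the topology of pointwise convergence, hence $\Gamma$ embeds as a dense subgroup of $G$. Proposition \ref{prop: conditions dense subgroups of topo simple} then immediately gives that $\Gamma$ is i.c.c.\ and non-solvable.

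For the finite index statement, let $\Sigma \leq \Gamma$ be a finite index subgroup. Proposition \ref{prop: conditions dense subgroups of topo simple} tells us that $\Sigma$ is dense in $G$. Faithfulness of the restricted action is automatic from faithfulness of the $\Gamma$-action. To check the homogeneity of the $\Sigma$-action, take any finite partial isometry $\varphi$ of $\US$: by homogeneity of $\US$ the set $U_\varphi := \{g \in G : g \text{ extends } \varphi\}$ is a nonempty basic open subset of $G$, and density of $\Sigma$ yields $\sigma \in \Sigma \cap U_\varphi$. Hence $\Sigma \in \HUS$.

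There is essentially no obstacle here beyond unpacking definitions; the real content sits in Proposition \ref{prop: conditions dense subgroups of topo simple} and in the topological simplicity of $\Iso(\US)$ via Tent--Ziegler, both of which are invoked as black boxes.
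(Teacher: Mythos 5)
Your proposal is correct and follows exactly the route the paper intends: the corollary is stated with no separate proof because it is an immediate consequence of Proposition \ref{prop: conditions dense subgroups of topo simple} applied to $G=\Iso(\US)$ (topologically simple by Tent--Ziegler, infinite and nonabelian), together with the identification of $\HUS$ with the class of countable dense subgroups of $\Iso(\US)$, and your handling of the finite index case (density from the proposition, faithfulness by restriction, homogeneity from density) is precisely the intended unpacking.
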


When $S=\{0,1\}$, recall that $\HUS$ is the class of highly transitive groups. Hull and Osin have furthermore shown the following dichotomy: a highly transitive group either contain a copy of the infinite alternating group, or it is \emph{mixed identity free}, which has stronger implications than those of the previous corollary (see \cite[sec. 5]{zbMATH06673802}).

\begin{remark}
We do not know whether in Proposition \ref{prop: conditions dense subgroups of topo simple}, for say a Polish group $G$, one can remove the hypothesis that $G$ is nonabelian. Indeed, there may exist a topologically simple  infinite Polish abelian group, in other words there may exist an infinite abelian Polish  group with no nontrivial proper closed subgroup (see the end of section 2 in \cite{Hooper_1976}, where it is also proved that there is an infinite Polish abelian group all whose locally compact subgroups are trivial). 
\end{remark}


\section{Strongly free actions on the $S$-Urysohn space}\label{sec: strongly free action on Urysohn}

\subsection{Equivariant extension of partial isometries}

\begin{definition}
A $\Gamma$-action by isometries on an $S$-metric space $X$ has the \textbf{extension property} if every  finitely supported one-point extension over a finite union of $\Gamma$-orbits is realized in $X$. 
\end{definition}
Let us make a few remarks about this definition. First, it is clear that the following assertions are equivalent.
\begin{itemize}
\item $X$ has the extension property.
\item The trivial action on $X$ of the trivial group has the extension property.
\item The action of any finite group on $X$ has the extension property.
\end{itemize}

Also, if $\Gamma\curvearrowright X$ has the extension property, then for every $\Lambda\leq\Gamma$, the $\Lambda$-action on $X$ also has the extension property. 
Indeed if $f$  is a finitely supported one-point extension over $\Lambda F$ where $F\Subset X$, then the Kat\v{e}tov extension of $f$ to $\Gamma F$ is also finitely supported (with same support), and hence realized in $X$. In particular, if there exists a group $\Gamma$ acting on $X$ such that the action $\Gamma\curvearrowright X$ has the extension property then $X$ has the extension property (since the action of the trivial subgroup of $\Gamma$ has the extension property).

Finally, note that any action with the extension property has infinitely many orbits at distance $1$ from each other because the one point extension $f(x)=1$ is finitely supported.

%
%

\begin{definition}
Let $\Sigma$ be a group and for $k=1,2$, let $\pi_k\colon \Sigma\curvearrowright(X_k,d_k)$. A $(\pi_1,\pi_2)$-partial isometry is an isometry between $\pi_1(\Sigma)F_1$ and $\pi_2(\Sigma)F_2$ where $F_1\Subset X_1$ and $F_2\Subset X_2$ which is $(\pi_1,\pi_2)$-equivariant. A $(\pi_1,\pi_2)$-isometry is a $(\pi_1,\pi_2)$-equivariant bijective isometry from $X_1$ to $X_2$.
\end{definition}

\begin{lemma}\label{lem: mixing brings finitely supported extensions}
Suppose $\Gamma\curvearrowright (X,d)$ is a mixing action and $F$ is a finite subset of $X$. Then for every $x\in X$, the function $d(x,\cdot)$ is a finitely supported one-point extension of $\Gamma F$. 
\end{lemma}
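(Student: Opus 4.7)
The plan is to use the mixing hypothesis to produce a finite subset $F'\Subset \Gamma F$ that supports the function $f=d(x,\cdot)$ restricted to $\Gamma F$, in the sense of Definition \ref{def: space of f.s. one point extensions}. That is, we need $F'\Subset\Gamma F$ such that for every $y\in\Gamma F$,
\[
d(x,y)=\min\Bigl(1,\min_{z\in F'}\bigl(d(x,z)+d(z,y)\bigr)\Bigr).
\]

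First, I would apply mixing to the finite set $F\cup\{x\}$: using the observation that immediately follows the definition of mixing, there exists a finite subset $E\Subset\Gamma$ such that for every $\gamma\in\Gamma\setminus E$, one has $d(\gamma p,q)=1$ for all $p,q\in F\cup\{x\}$; in particular $d(\gamma f,x)=1$ for every $f\in F$ and every $\gamma\notin E$. Then I would set $F':=EF\subseteq\Gamma F$, which is finite since $E$ and $F$ are. (If $F$ is empty there is nothing to prove, so we may assume $F\ne\emptyset$.)

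To check that $F'$ is a support, I would split into two cases for an arbitrary $y=\gamma f\in\Gamma F$ (with $\gamma\in\Gamma$, $f\in F$). If $\gamma\in E$, then $y\in F'$, so taking $z=y$ gives $d(x,z)+d(z,y)=d(x,y)$, and the triangle inequality shows this realizes the minimum; since $d(x,y)\le 1$ we obtain equality with $\min(1,\cdot)$. If instead $\gamma\notin E$, then by the choice of $E$ we have $d(x,y)=1$; and for every $z\in F'$ the triangle inequality gives $d(x,z)+d(z,y)\ge d(x,y)=1$, so the required minimum equals $1=d(x,y)$.

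I do not anticipate a serious obstacle here: the only real idea is to recognize that mixing applied to the finite set $F\cup\{x\}$ forces $d(x,\cdot)$ to be identically $1$ outside a finite portion of $\Gamma F$, at which point the ``saturated'' support $F'=EF$ trivially works by triangle inequality.
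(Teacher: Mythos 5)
Your proof is correct and follows essentially the same route as the paper: mixing forces $d(x,\cdot)$ to equal $1$ outside a finite portion of $\Gamma F$, and that finite portion (the paper takes exactly the points at distance $<1$ from $x$, you take the slightly larger saturated set $EF$) is a support by the triangle inequality. The only difference is cosmetic, so nothing further is needed.
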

\begin{proof}
Since the action is mixing, there are only finitely many points $y\in\Gamma F$ such that $d(x,y)<1$. These points form a support for $d(x,\cdot)$.
\end{proof}

The next proposition is the key building block of most of our constructions.

\begin{proposition}\label{Extension}
For $k=1,2$, let $\pi_k\colon \Sigma\curvearrowright \U_S$ be actions of a countable group $\Sigma$ which are strongly free, mixing and have the extension property.

Then, every $(\pi_1,\pi_2)$-partial isometry $\varphi$ extends to a $(\pi_1,\pi_2)$-isometry of $\U_S$. 
\end{proposition}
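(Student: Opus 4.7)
The natural approach is a back-and-forth argument that extends $\varphi$ one $\Sigma$-orbit at a time. Fix an enumeration $\U_S = \{x_n : n \in \N\}$ and build an increasing chain of $(\pi_1,\pi_2)$-partial isometries $\varphi_0 := \varphi \subseteq \varphi_1 \subseteq \varphi_2 \subseteq \cdots$, arranging along the way that every $x_n$ eventually belongs to both $\dom \varphi_k$ (at some even step) and $\rng \varphi_k$ (at some odd step). The union $\varphi_\infty := \bigcup_n \varphi_n$ will then be the desired $(\pi_1,\pi_2)$-equivariant global isometry of $\U_S$.

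The core of the argument is the forth step (the back step is symmetric since both $\pi_1$ and $\pi_2$ satisfy the same hypotheses). Suppose $\varphi_n \colon \pi_1(\Sigma) F_1 \to \pi_2(\Sigma) F_2$ is an equivariant partial isometry and let $x \in \U_S \setminus \pi_1(\Sigma) F_1$ (if $x$ is already in the domain there is nothing to do). Define $g \colon \pi_2(\Sigma) F_2 \to S$ by $g(\varphi_n(f)) := d(x, f)$; this is well-defined since $\varphi_n$ is bijective onto its image, and it satisfies the Kat\v{e}tov condition \eqref{Katetov condition} as it records actual distances in $\U_S$. Moreover, Lemma \ref{lem: mixing brings finitely supported extensions} applied to the mixing action $\pi_1$ tells us that $d(x,\cdot)$ has a finite support on $\pi_1(\Sigma) F_1$; since $\varphi_n$ is an isometry, transporting that support across $\varphi_n$ yields a finite support for $g$ on $\pi_2(\Sigma) F_2$. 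We may therefore invoke the extension property of $\pi_2$ to produce a point $y \in \U_S$ with $d(y, \varphi_n(f)) = d(x, f)$ for every $f \in \pi_1(\Sigma) F_1$, and tentatively set $\varphi_{n+1}(\pi_1(\sigma) x) := \pi_2(\sigma) y$ for each $\sigma \in \Sigma$.

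The main obstacle is to verify that this truly extends $\varphi_n$ to an honest equivariant partial isometry, and in particular that the new orbit $\pi_2(\Sigma) y$ is disjoint from $\pi_2(\Sigma) F_2$. If we had $y = \pi_2(\sigma_0) \varphi_n(f_0)$ for some $\sigma_0 \in \Sigma$ and $f_0 \in F_1$, then evaluating $g$ at $\pi_2(\sigma_0)\varphi_n(f_0) = \varphi_n(\pi_1(\sigma_0) f_0)$ would give
\[
d(x, \pi_1(\sigma_0) f_0) = g(\varphi_n(\pi_1(\sigma_0) f_0)) = d(y, \pi_2(\sigma_0)\varphi_n(f_0)) = 0,
\]
forcing $x = \pi_1(\sigma_0) f_0 \in \pi_1(\Sigma) F_1$, a contradiction. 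Strong freeness of $\pi_1$ and $\pi_2$ then makes the assignment $\pi_1(\sigma) x \mapsto \pi_2(\sigma) y$ well-defined and injective, equivariance is built in by construction, and the isometry condition on the enlarged domain reduces via equivariance to the defining property of $g$ together with the fact that, between distinct elements, distances within each of $\pi_1(\Sigma) x$ and $\pi_2(\Sigma) y$ are all equal to $1$ (strong freeness). The standard back-and-forth bookkeeping then concludes the proof.
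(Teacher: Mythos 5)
Your proof is correct and follows essentially the same route as the paper's: a back-and-forth reduction to a single equivariant one-orbit extension, where mixing plus Lemma \ref{lem: mixing brings finitely supported extensions} gives a finitely supported one-point extension, the extension property realizes it, and strong freeness together with equivariance yields the isometry check on the enlarged domain. The only difference is your explicit verification that the realized point avoids the old range, which the paper leaves implicit (it also follows from the distance computation); this does not change the argument.
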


\begin{proof}
A straightforward back-and-forth argument yields that we only need to show that for every $(\pi_1,\pi_2)$-partial isometry $\varphi$  and every
 $x\in \U_S\setminus\dom\varphi$, there is a $(\pi_1,\pi_2)$-partial isometry $\tilde \varphi$ whose domain contains $x$ and which extends $\varphi$. 
Let $f$ be the one-point extension of $\rng \varphi$ defined by $f(y)=d(\varphi^{-1}(y),x)$ for all $y\in \rng\varphi$.  

Because $\pi_1$ is mixing, Lemma \ref{lem: mixing brings finitely supported extensions} yields $f$ is finitely supported. 
By the extension property, there is $z\in \mathbb U_S$ such that for all $y\in\rng\varphi$ we have 
$d(\varphi\inv(y),x)=d(y,z)$.

Observe that the extension of $\varphi$ obtained by sending $x$ to $z$ is by construction a partial isometry. By freeness, we may extend $\varphi$ further to a $(\pi_1,\pi_2)$-equivariant partial isometry $\tilde \varphi\colon \dom\varphi\sqcup\pi_1(\Sigma)x\to\rng\varphi\sqcup\pi_2(\Sigma)z$ by letting $\tilde\varphi(\pi_1(\sigma)x)=\pi_2(\sigma) z$.
 
Let us check that $\tilde \varphi$ is indeed a partial isometry. First, by strong freeness its restriction to $\pi_1(\Sigma)x$ is isometric. Since $\varphi$ was also isometric, we only need to check that if $x_1\in \dom\varphi$ and $x_2\in \pi_1(\Sigma) x$ then $d(\varphi(x_1),\tilde\varphi(x_2))=d(x_1,x_2)$. 

Let $\sigma\in\Sigma$ be such that $x_2=\pi_1(\sigma)x$. Then 
\begin{align*}
d(x_1,x_2)&=d(\pi_1(\sigma\inv)x_1, x)\\
&=d(\varphi(\pi_1(\sigma\inv)x_1),z)\\
&=d(\pi_2(\sigma\inv)\varphi(x_1),z)\\
&=d(\varphi(x_1),\pi_2(\sigma)z)\\
&=d(\varphi(x_1),\tilde\varphi(x_2)).
\end{align*}
So $\tilde \varphi$ is indeed a $(\pi_1,\pi_2)$-partial isometry.
\end{proof}

\subsection{Extensions with parameters} \label{sec: ext with params}

Let $(Y,d_Y)$ be a $S$-metric space, suppose $X\subseteq Y$ and $A$ is a set. Then we can consider the space $Y\times A$ equipped with the pseudometric
$$d((y,a),(y',a'))=\left\{
\begin{array}{cl}
\min\left(1,\inf_{x\in X} \left(d_Y(y,x)+d_Y(x,y')\right)\right)&\text{ if }a'\neq a, \\
d_Y(y,y') &\text{ if }a'=a.
\end{array}
\right.
$$
Then the metric space obtained by identifying elements at distance $0$ is denoted by $Y\times_X A$, and the induced metric still by $d$. 
Observe that for $x,x'\in X$ and $a,a'\in A$,  
$$d((x,a),(x',a'))=\inf_{x''\in X}\left(d_Y(x,x'')+d_Y(x'',x')\right)=d_Y(x,x')$$
so $Y\times_XA$ is the metric amalgam of $\abs A$ copies of $Y$ over $X$ and thus $Y\times_XA$ is a $S$-metric space as soon as $Y$ is a finitely supported extension of $X$.

We have the following  inequality: for all $(y,a)$ and $(y',a')$ in $Y\times_X A$,
\begin{equation}\label{eqn: distances are dilated}
d((y,a),(y',a'))\geq d_Y(y,y')
\end{equation}
because by the triangle inequality for all $x$ we have $d_Y(y,x)+d_Y(x,y')\geq d_Y(y,y')$. 

We also have for each $a\in A$ an isometry $i_a\colon Y\to Y\times_XA$ which takes $y$ to $(y,a)$. In particular $Y$ embeds in  $Y\times_XA$ isometrically.

For all $a\neq a'$ and all $x\in X$ we have $i_a(x)=i_{a'}(x)$ and we thus let $i\colon X\to Y\times_XA$ be the common map.

\begin{remark}Every isometry $g$ of $Y$ which preserves $X$ naturally extends to $Y\times_X A$ by letting $g(y,a)=(gy,a)$. The construction we need is different, however.
\end{remark}
\begin{proposition}
Suppose that we have a group $\Gamma$ acting on $Y$ by isometries which preserve the set $X$, and suppose moreover that $\Gamma$ acts on $A$. Then the diagonal action on $Y\times A$ defined by $\gamma(y,a)=(\gamma y,\gamma a)$ induces an isometric action on $(Y\times_X A,d)$ which extends the $\Gamma$-action on $X$.
\end{proposition}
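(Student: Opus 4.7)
The plan is to verify in turn that the diagonal formula $\gamma\cdot(y,a) = (\gamma y, \gamma a)$ preserves the pseudometric on $Y\times A$, that it therefore descends to a well-defined isometric action on the quotient $Y\times_X A$, and finally that it is compatible with the natural map $i\colon X\to Y\times_X A$.

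First I would check that the action preserves the pseudometric. The only interesting case is when $a\neq a'$, since on pairs with equal second coordinate everything reduces to $d_Y(\gamma y,\gamma y')=d_Y(y,y')$ by assumption. For $a\neq a'$, bijectivity of the $\Gamma$-action on $A$ gives $\gamma a\neq \gamma a'$, so
\[
d(\gamma(y,a),\gamma(y',a')) = \min\Bigl(1,\inf_{x\in X}(d_Y(\gamma y,x)+d_Y(x,\gamma y'))\Bigr).
\]
Reparametrising the infimum via $x=\gamma x'$, using that $\gamma$ preserves $X$ and acts isometrically on $Y$, gives
\[
\inf_{x\in X}(d_Y(\gamma y,x)+d_Y(x,\gamma y'))=\inf_{x'\in X}(d_Y(y,x')+d_Y(x',y')),
\]
which is exactly $d((y,a),(y',a'))$ (after the $\min$ with $1$).

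Once the pseudometric is preserved, the diagonal action automatically descends to the quotient $Y\times_X A$: pairs at pseudodistance zero are sent to pairs at pseudodistance zero, hence the action is well defined on equivalence classes, and the induced action is isometric for $d$.

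It only remains to check compatibility with the embedding $i\colon X\to Y\times_X A$. By definition $i(x)$ is the common class of $(x,a)$ for $a\in A$, and $\gamma\cdot i(x)$ is represented by $(\gamma x,\gamma a)$, whose class is again independent of $a$ and equals $i(\gamma x)$, so $\gamma\cdot i(x)=i(\gamma x)$ as required. I do not expect any real obstacle here; the whole statement is essentially a bookkeeping verification, and the only point that needs a moment of care is the change of variable in the infimum, which relies crucially on the hypothesis that $\Gamma$ preserves $X$.
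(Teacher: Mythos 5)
Your proposal is correct and follows essentially the same route as the paper: the case distinction on whether the second coordinates agree, the change of variable $x=\gamma x'$ in the infimum using $\gamma X = X$ and the fact that $\gamma$ is an isometry of $Y$, and the $\Gamma$-equivariance of the embedding $i\colon X\to Y\times_X A$. The only difference is that you make explicit the (routine) remark that preserving the pseudometric guarantees the action descends to the quotient $Y\times_X A$, which the paper leaves implicit.
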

\begin{proof}
Let us check that every $\gamma\in \Gamma$ defines an isometry for the pseudometric $d$. Let $a,a'\in A$ and $y,y'\in Y$.
If $a=a'$, then $d(\gamma(y,a),\gamma(y',a'))=d_Y(\gamma y,\gamma y')=d((y,a),(y',a'))$. If $a\neq a'$ then $\gamma a\neq \gamma a'$ and thus
\begin{align*}
d(\gamma (y,a),\gamma (y',a'))&=\min\left(1,\left(\inf_{x\in X} d_Y(\gamma y,x)+d_Y(x,\gamma y')\right)\right)\\
&=\min\left(1,\left(\inf_{x\in X} d_Y(\gamma y,\gamma x)+d_Y(\gamma x,\gamma y')\right)\right)
\end{align*}
because $X=\gamma X$. Now using the fact that $\gamma$ is an isometry, we conclude
\begin{align*}
d(\gamma (y,a),\gamma (y',a'))
&=\min\left(1,\left(\inf_{x\in X} d_Y(y,x)+d_Y(x,y')\right)\right)\\
&=d((y,a),(y',a')).
\end{align*}

Finally observe that the embedding $i\colon X\to Y\times_X A$ is $\Gamma$-equivariant and thus conjugates the $\Gamma$-action on $X$ to the $\Gamma$-action on $i(X)$.
\end{proof}

We call the above action the \textbf{diagonal action} of $\Gamma$ on $Y\times_X A$. 
\begin{proposition}
Let $\Gamma$ be a group acting by isometries on $(Y,d_Y)$, let $X\subseteq Y$ be $\Gamma$-invariant subset and let $A$ be a set also acted upon by $\Gamma$. The following hold.
\begin{enumerate}
\item If the $\Gamma$-action on  $X$ is faithful, then the $\Gamma$-action on $Y\times_X A$ is faithful.
\item If the $\Gamma$-action on $Y$ is mixing then the $\Gamma$-action on $Y\times_X A$ is mixing.
\item If $\Sigma\leq\Lambda\leq\Gamma$ and $\Sigma$ is highly core-free for the $\Lambda$-action on $Y$ then $\Sigma$ is also highly core-free for the $\Lambda$-action on $Y\times_X A$.
\item If the $\Gamma$-action on $X$ is strongly free and the $\Gamma$-action on $A$ is free, then the $\Gamma$-action on $Y\times_X A$ is strongly free.
\end{enumerate}
\end{proposition}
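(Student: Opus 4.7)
The plan is to prove the four items in order, leveraging the key inequality $d((y,a),(y',a'))\geq d_Y(y,y')$ from \eqref{eqn: distances are dilated} together with the $\Gamma$-equivariant isometric embedding $i\colon X\hookrightarrow Y\times_X A$ established in the preceding proposition. Three of the four items are quick consequences of these two tools; the fourth requires a small trick to bridge the gap between strong freeness on $X$ and freeness on $Y$.

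For (1), if $\gamma\in\Gamma$ acts trivially on $Y\times_X A$, then by $\Gamma$-equivariance of $i$ it acts trivially on $i(X)$, hence on $X$, so $\gamma=1$ by faithfulness of $\Gamma\curvearrowright X$. For (2), given $(y,a),(y',a')\in Y\times_X A$, mixingness of $\Gamma\curvearrowright Y$ supplies $d_Y(\gamma y,y')=1$ for all but finitely many $\gamma$; combined with \eqref{eqn: distances are dilated} this gives $d(\gamma(y,a),(y',a'))\geq d_Y(\gamma y,y')=1$, so equality holds cofinitely.

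For (3), I would start with $F\Subset Y\times_X A$ and set $F_Y\Subset Y$ to be its projection onto $Y$. Apply the highly core-free hypothesis for $\Sigma\leq\Lambda\curvearrowright Y$ to $F_Y$ to obtain $g\in\Lambda$ with $d_Y(gy,v)=1=d_Y(\sigma gy,gy')$ for all $y,y'\in F_Y$, $v\in\Sigma F_Y$, $\sigma\in\Sigma\setminus\{1\}$. I would then verify that this same $g$ works for $F$: given $\xi=(y,a),\eta=(y',a')\in F$, $u=\sigma\cdot(y'',a'')\in\Sigma F$ and $\tau\in\Sigma\setminus\{1\}$, the distances $d(g\xi,u)$ and $d(\tau g\xi,g\eta)$ are either computed exactly as $d_Y(gy,\sigma y'')$ respectively $d_Y(\tau g y,gy')$ when the $A$-coordinates match, or are bounded below by those same quantities via \eqref{eqn: distances are dilated} when they do not. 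In either case the core-free property applied to $F_Y$ forces these $Y$-distances to equal $1$.

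The main obstacle is (4), where strong freeness on $X$ does not transfer directly to $Y$. Given $\gamma\in\Gamma\setminus\{1\}$ and $(y,a)\in Y\times_X A$, freeness of $\Gamma\curvearrowright A$ first ensures $\gamma a\neq a$, hence
\[
d(\gamma(y,a),(y,a))=\min\Bigl(1,\inf_{x\in X}\bigl(d_Y(\gamma y,x)+d_Y(x,y)\bigr)\Bigr).
\]
The trick is to invoke strong freeness on $X$ through the triangle inequality: for every $x\in X$, since $\gamma$ is an isometry of $Y$ and $d_Y(\gamma x,x)=1$, one has
\[
d_Y(\gamma y,x)\geq d_Y(\gamma x,x)-d_Y(\gamma x,\gamma y)=1-d_Y(x,y),
\]
so $d_Y(\gamma y,x)+d_Y(x,y)\geq 1$. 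Taking the infimum and then $\min(1,\cdot)$ yields $d(\gamma(y,a),(y,a))=1$, proving strong freeness of $\Gamma\curvearrowright Y\times_X A$.
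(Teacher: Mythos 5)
Your proof is correct and follows essentially the same route as the paper: items (1)--(3) via the equivariant embedding $i\colon X\to Y\times_X A$ and the dilation inequality \eqref{eqn: distances are dilated} applied to the $Y$-projection of $F$, and item (4) via a triangle-inequality estimate reducing to strong freeness on $X$ (the paper bounds $d_Y(y,x)+d_Y(y,\gamma^{-1}x)\geq d_Y(x,\gamma^{-1}x)=1$, which is the same computation as yours phrased at $\gamma^{-1}x$ instead of $\gamma x$).
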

\begin{proof}
(1) It is obvious since the isometry $ i\colon  X\rightarrow Y\times_X A$ is $\Gamma$-equivariant.

(2) Suppose the $\Gamma$-action on $Y$ is mixing. Let $(y_1,a_1), (y_2,a_2)\in Y\times_X A$.  By assumption for all but finitely many $\gamma\in \Gamma$ we have $d_Y(\gamma y_1,y_2)=1$, so by inequality \eqref{eqn: distances are dilated} for all but finitely $\gamma\in\Gamma$ we have $d(\gamma(y_1,a_1),(y_2,a_2))=1$. 

(3) Let $\Lambda\leq \Gamma$, let $\Sigma\leq \Lambda$. Suppose $\Sigma$ is highly core-free for the $\Lambda$-action on $Y$. Let $F\Subset Y\times_X A$  and write $F= F_1\times\{a_1\}\sqcup \ldots \sqcup F_n\times\{a_n\}$. Let $\tilde F=\bigcup_{i=1}^n F_i$, then there is $\lambda\in\Lambda$ such that $d_Y(\lambda y,u)=1=d_Y(\sigma \lambda y,\lambda y')$ for all $y,y'\in F$, $u\in\Sigma F$ and all $\sigma\in\Sigma\setminus\{1\}$. As before by inequation \eqref{eqn: distances are dilated} we are done.

(4) Suppose the $\Gamma$-action on $X$ is strongly free and the $\Gamma$-action on $A$ is free. Let $(y,a)\in Y\times_XA$ and let $\gamma\in \Gamma \setminus\{1\}$. Then $\gamma a\neq a$ so 
\begin{align*}
d((y,a),\gamma(y,a))&=\inf_{x\in X} \left(d_Y(y,x)+d_Y(x,\gamma y)\right)\\
&=\inf_{x\in X} \left(d_Y(y,x)+d_Y(y,\gamma^{-1} x)\right)\\&\geq\inf_{x\in X} d_Y(x,\gamma^{-1} x).
\end{align*}
By strong freeness for all $x\in X$ we have $d_Y(x,\gamma^{-1} x)=1$. So we have $d((y,a),\gamma(y,a))=1$ and the $\Gamma$-action on $Y\times_X A$ is thus strongly free.
\end{proof}

Suppose now $\Gamma$ is a countable group acting on a $S$-metric space $(X,d)$ by isometries. Then the $\Gamma$-action extends to $E_S(X)$, and we define the $S$-metric space $E_S^\Gamma(X)=E_S(X)\times_X\Gamma$. Then the $\Gamma$-action by left translation on itself provides us a $\Gamma$-action on $E_S^\Gamma(X)$ which extends the $\Gamma$-action on $X$. We have the following facts.

\begin{proposition}\label{relativeinductionstep}
The following hold.
\begin{enumerate}
\item If the $\Gamma$-action on $X$ is faithful so is the $\Gamma$-action on $E_S^\Gamma(X)$.
\item If the $\Gamma$-action on $X$ is mixing then so is the $\Gamma$-action on $E_S^\Gamma(X)$.
\item If  $\Sigma\leq\Lambda\leq\Gamma$ and $\Sigma$ is highly core-free for the $\Lambda$-action on $X$ then $\Sigma$ is also highly core-free for the $\Lambda$-action on $E_S^\Gamma(X)$.
\item If the $\Gamma$-action on $X$ is strongly free then the $\Gamma$-action on $E_S^\Gamma(X)$ is strongly free.
\end{enumerate}
\end{proposition}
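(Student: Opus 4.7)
The plan is to deduce this proposition as a direct combination of the two previous results, namely Proposition \ref{inductionstep} (which transfers properties from $X$ to $E_S(X)$) and the general proposition on $Y\times_X A$ just proved (which transfers properties from $Y$ to $Y\times_X A$). We take $Y=E_S(X)$ with the $\Gamma$-action from Proposition \ref{inductionstep} and $A=\Gamma$ with the left-translation $\Gamma$-action, so that $E_S^\Gamma(X)=E_S(X)\times_X\Gamma$ carries the corresponding diagonal $\Gamma$-action.

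For (1), we note that $X$ embeds $\Gamma$-equivariantly into $E_S^\Gamma(X)$ via the composition of the inclusion $X\hookrightarrow E_S(X)$ and the map $i$ from Section \ref{sec: ext with params}, so faithfulness of $\Gamma\curvearrowright X$ directly yields faithfulness of $\Gamma\curvearrowright E_S^\Gamma(X)$. Alternatively, one applies part (1) of the preceding proposition. For (2), Proposition \ref{inductionstep}\eqref{inductionstep2} gives that $\Gamma\curvearrowright E_S(X)$ is mixing, and then part (2) of the preceding proposition delivers mixing of $\Gamma\curvearrowright E_S^\Gamma(X)$. For (3), Proposition \ref{inductionstep}\eqref{inductionstep3} gives that $\Sigma$ is highly core-free for $\Lambda\curvearrowright E_S(X)$, and then part (3) of the preceding proposition yields that $\Sigma$ is highly core-free for $\Lambda\curvearrowright E_S^\Gamma(X)$.

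For (4), strong freeness is the only item where one cannot directly appeal to an analogue on $E_S(X)$, since, as noted in the remark after Proposition \ref{inductionstep}, strong freeness does not pass to $E_S(X)$ (the constant extension $f\equiv 1$ is fixed). This is precisely why the extension-with-parameters construction is needed. Here one uses that $\Gamma$ acts on itself by left translation freely: for any nontrivial $\gamma\in\Gamma$ one has $\gamma g\neq g$ for every $g\in\Gamma$. Combined with the assumed strong freeness of $\Gamma\curvearrowright X$, part (4) of the preceding proposition applies directly and gives strong freeness of $\Gamma\curvearrowright E_S^\Gamma(X)$. No serious obstacle is to be expected, as the bookkeeping is done once and for all in the two previous propositions; the point of the present proposition is simply to package both into a single statement tailored to the iteration that will build a strongly free, mixing, highly core-free, and faithful $\Gamma$-action on $\U_S$ with the extension property.
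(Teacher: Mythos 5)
Your proof is correct and follows essentially the same route as the paper, which also deduces the proposition as a direct combination of Proposition \ref{inductionstep} and the preceding proposition on $Y\times_X A$ (applied with $Y=E_S(X)$ and $A=\Gamma$). Your item-by-item bookkeeping, including the observation that strong freeness must be fed in from $X$ together with freeness of the left-translation action rather than through $E_S(X)$, is exactly what the paper's one-line proof leaves implicit.
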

\begin{proof}
This is a straightforward application of the previous proposition along with Proposition \ref{inductionstep}.\end{proof}

\subsection{Induced actions on the $S$-Urysohn space}
Let $(X,d)$ be an $S$-metric space on which $\Gamma$ acts by isometries.
Define the sequence of $S$-metric spaces $X_0^\Gamma=X$ and, for $n\geq 0$, $X_{n+1}^\Gamma=E_S^\Gamma(X^\Gamma_n)$. Consider the inductive limit $\displaystyle X_\infty^\Gamma=\lim_{\rightarrow} X_n^\Gamma$ along with its natural $\Gamma$-action.

\begin{proposition}\label{RelativeInducedAction}
With the notations above, the following holds.
\begin{enumerate}
\item\label{induce1} $X^\Gamma_\infty\simeq\U_S$.
\item\label{induce2} If the $\Gamma$-action on $X$ is mixing then so is the $\Gamma$-action on $X^\Gamma_\infty$.
\item\label{induce3} If $\Sigma\leq\Lambda\leq \Gamma$ is highly core-free for the $\Lambda$-action on $X$ then it  is also highly core-free for the $\Lambda$-action on $X^\Gamma_\infty$.
\item\label{induce4} If the $\Gamma$-action on $X$ is strongly free then so is the $\Gamma$-action on $X^\Gamma_\infty$.
\item\label{induce5} The $\Gamma$-action on $X^\Gamma_\infty$ has the extension property. 
\end{enumerate}
\end{proposition}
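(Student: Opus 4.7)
My plan is to handle parts (2), (3), and (4) by a common inductive argument combined with a passage to the inductive limit, then prove (5) by an explicit realization using the added $\Gamma$-factor, and finally deduce (1) from (5) together with the countability of $X^\Gamma_\infty$.

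For parts (2), (3), and (4), I would apply Proposition \ref{relativeinductionstep} inductively to show that mixing, highly core-freeness of $\Sigma$ for the $\Lambda$-action, and strong freeness each persist from $X^\Gamma_n$ to $X^\Gamma_{n+1} = E_S^\Gamma(X^\Gamma_n)$. The passage from each $X^\Gamma_n$ to $X^\Gamma_\infty$ is essentially automatic: any finite set of points used to test the property lies in some $X^\Gamma_n$, and the $S$-metric (as well as the $\Gamma$-action) on $X^\Gamma_\infty$ restricts to the one on $X^\Gamma_n$, so a witness at level $n$ remains a witness at the limit.

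The substantive part of the proposition is (5). Fix a finite set $F \Subset X^\Gamma_\infty$ and a finitely supported one-point extension $f$ over $\Gamma F$, with support $F_0 \Subset \Gamma F$. Choose $n$ with $F \subseteq X^\Gamma_n$; since each $X^\Gamma_n$ is $\Gamma$-invariant we have $\Gamma F \subseteq X^\Gamma_n$ and hence $F_0 \subseteq X^\Gamma_n$. The Kat\v{e}tov extension
\[
\tilde f(y) = \min\left(1,\min_{z \in F_0}\left(f(z) + d(z,y)\right)\right), \qquad y \in X^\Gamma_n,
\]
belongs to $E_S(X^\Gamma_n)$, and the point $(\tilde f, 1_\Gamma) \in E_S^\Gamma(X^\Gamma_n) = X^\Gamma_{n+1} \subseteq X^\Gamma_\infty$ is the desired realization: for any $y \in \Gamma F$, using the embedding $i\colon X^\Gamma_n \hookrightarrow X^\Gamma_{n+1}$ (so that $i(y) = (y,1_\Gamma)$) and the definition of the amalgamated metric, one gets
\[
d((\tilde f, 1_\Gamma), i(y)) = d_{E_S(X^\Gamma_n)}(\tilde f, y) = \tilde f(y) = f(y),
\]
where the last equality uses that $F_0$ is a support of $f$ over $\Gamma F$.

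Finally, (1) follows from (5) and countability. Any one-point extension of a finite $F \Subset X^\Gamma_\infty$ is automatically finitely supported (with support $F$), and its Kat\v{e}tov extension to $\Gamma F$ is a finitely supported one-point extension in the sense of (5), hence realized; so $X^\Gamma_\infty$ satisfies the Urysohn one-point extension property. Since it is also countable (each $X^\Gamma_n$ is), uniqueness of $\U_S$ yields $X^\Gamma_\infty \simeq \U_S$. The main obstacle in this whole argument is verifying (5): the construction of $X^\Gamma_{n+1}$ uses the amalgamated product $E_S(X^\Gamma_n) \times_{X^\Gamma_n} \Gamma$ rather than just $E_S(X^\Gamma_n)$ (this was needed to preserve strong freeness), and one must check that this extra amalgamation factor does not disturb distances to points already in $X^\Gamma_n$, which is precisely the content of the identity $d((\tilde f, 1_\Gamma), (y, 1_\Gamma)) = d_{E_S(X^\Gamma_n)}(\tilde f, y)$ for $y \in X^\Gamma_n$.
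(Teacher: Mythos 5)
Your proposal is correct and follows essentially the same route as the paper: parts (2)--(4) via an inductive application of Proposition \ref{relativeinductionstep} together with the (routine) passage to the limit, and parts (1) and (5) by realizing the Kat\v{e}tov extension of $f$ to $X^\Gamma_n$ as a point of $E_S^\Gamma(X^\Gamma_n)=X^\Gamma_{n+1}$, using that this space embeds isometrically into $X^\Gamma_\infty$ fixing $X^\Gamma_n$. The only cosmetic difference is that you deduce (1) from (5) rather than proving it by the same direct argument, which is fine.
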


\begin{proof}
Assertions \eqref{induce2} to \eqref{induce4} follow from Propositions \ref{relativeinductionstep} and the fact that these conditions hold on $X_\infty^\Gamma$ if and only if they hold on $X_n^\Gamma$ for each $n\in\N$.

Let us prove \eqref{induce5}. Let $F$ be a finite subset of $X^\Gamma_\infty$ and let $f$ be a finitely supported one-point extension of $\Gamma F$. Pick $N$ large enough that $F\subseteq X_N^\Gamma$. Since $X_N^\Gamma$ is $\Gamma$-invariant we also have  $\Gamma F\subseteq X_N^\Gamma$. Let $\tilde f$ be the Kat\v{e}tov extension of $f$ to $X_N^\Gamma$, observe that $\tilde f$ is still finitely supported with the same support as $f$ and thus $\tilde f\in E_S(X_N^\Gamma$). For all $x\in X_N^\Gamma$ we have $d(x,\tilde f)=\tilde f(x)$ where $d$ is the metric on $E_S(X_N^\Gamma)$, in particular $d(x,\tilde f)=f(x)$ for all $x\in \Gamma F$. Since $E_S(X_N^\Gamma)$ embeds isometrically in $X^\Gamma_\infty$ via a $\Gamma$-equivariant map which fixes $X_N^ \Gamma$, the proof of $(5)$ is complete. It now follows from $(5)$ that $X^\Gamma_\infty$ itself has the extension property. Hence, $X^\Gamma_\infty\simeq\U_S$.\end{proof}

\begin{corollary}
Every finite group admits a unique strongly free action on $\mathbb U_S$ up to conjugacy.
\end{corollary}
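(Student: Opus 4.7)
The plan is to split the claim into existence and uniqueness, both of which should follow almost directly from the machinery already established, exploiting the fact that for a finite group several hypotheses simplify dramatically.

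For existence, I would start with the simplest possible strongly free action: take $X=\Gamma$ equipped with the $\{0,1\}$-valued discrete metric (this is an $S$-metric space since $0,1\in S$), on which $\Gamma$ acts by left translation. This action is strongly free because for $\gamma\neq 1$ and any $h\in\Gamma$, $\gamma h\neq h$, hence $d(\gamma h,h)=1$. It is also trivially mixing, since ``for all but finitely many $\gamma\in\Gamma$'' is a vacuous quantifier over a finite group. Then I would feed this action into the construction from Proposition \ref{RelativeInducedAction}, forming $X_\infty^\Gamma$: by assertions \eqref{induce1} and \eqref{induce4} of that proposition, $X_\infty^\Gamma\simeq \U_S$ and the induced $\Gamma$-action is strongly free, which is all that is required.

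For uniqueness, let $\pi_1,\pi_2\colon \Gamma\curvearrowright \U_S$ be two strongly free actions. Both are mixing (again vacuously) and both have the extension property, since $\U_S$ itself has the extension property and, as remarked after the definition, any action of a finite group on a space with the extension property automatically has the extension property. Pick any $x,y\in \U_S$ and define $\varphi_0\colon \pi_1(\Gamma)x\to \pi_2(\Gamma)y$ by $\pi_1(\gamma)x\mapsto \pi_2(\gamma)y$. Strong freeness of both actions ensures that both orbits consist of $|\Gamma|$ distinct points at pairwise distance $1$ from one another, so $\varphi_0$ is a genuine $(\pi_1,\pi_2)$-partial isometry on a single orbit. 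By Proposition \ref{Extension}, $\varphi_0$ extends to a $(\pi_1,\pi_2)$-equivariant isometry $\varphi$ of $\U_S$, which is exactly a conjugacy $\varphi\pi_1(\cdot)\varphi^{-1}=\pi_2(\cdot)$.

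The only place where one must be slightly careful is the verification that $\varphi_0$ is actually a partial isometry: one needs both $\pi_1(\gamma)x\neq \pi_1(\gamma')x$ whenever $\gamma\neq\gamma'$ (so that the map is well-defined and injective) and the symmetric distance-$1$ computation, both of which follow immediately from strong freeness via $d(\pi_i(\gamma)x,\pi_i(\gamma')x)=d(x,\pi_i(\gamma^{-1}\gamma')x)=1$. I do not anticipate a real obstacle: all the work was absorbed in Proposition \ref{RelativeInducedAction} (for existence) and Proposition \ref{Extension} (for uniqueness), and the finiteness of $\Gamma$ exactly kills the two auxiliary hypotheses (mixing, extension property) that one would otherwise need to arrange by hand.
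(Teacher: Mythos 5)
Your proposal is correct and follows essentially the same route as the paper: existence via the left-translation action on $\Gamma$ with the discrete metric fed into Proposition \ref{RelativeInducedAction}, and uniqueness via Proposition \ref{Extension}, with finiteness making mixing and the extension property automatic. The only (inessential) difference is that the paper applies Proposition \ref{Extension} to the empty partial isometry, whereas you extend the equivariant map between two orbits — which works just as well, and your verification that it is a partial isometry is the correct one.
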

\begin{proof}
Let $\Gamma$ be a finite group, consider the left-action of $\Gamma$ on itself as an action by isometries where we put on $\Gamma$ the discrete metric. This action is clearly strongly free. By Proposition \ref{RelativeInducedAction} we can extend this action to a strongly free action of $\Gamma$ on $(\mathbb U_S,d)$. 
Now observe that any $\Gamma$-action on $\mathbb U_S$ is automatically mixing and has the extension property. Applying Proposition \ref{Extension} to the empty map, we see that our strongly free action is unique up to conjugacy.  
\end{proof}

\begin{corollary}\label{ActionU}
Any infinite countable group $\Gamma$ admits a unique (up to conjugacy) strongly free mixing action with the extension property on $\U_S$. This action has the property that, for any subgroups $\Sigma\leq\Lambda\leq\Gamma$ such that $\Sigma\leq \Lambda$ is a highly core-free subgroup,  $\Sigma$ is highly core-free for $\Lambda\curvearrowright\U_S$.
\end{corollary}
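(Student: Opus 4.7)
The plan is to prove existence and uniqueness separately, using Propositions \ref{RelativeInducedAction} and \ref{Extension} respectively, starting from the most natural ``seed'' action: the left-translation action of $\Gamma$ on itself equipped with the discrete metric.

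\textbf{Existence.} I would take as initial data the set $\Gamma$ equipped with the $\{0,1\}$-valued discrete metric $d(g,h) = 1$ for $g\neq h$, which is $S$-valued since $0, 1 \in S$. The left-translation action $\Gamma \curvearrowright (\Gamma, d)$ is faithful and strongly free (any $\gamma \neq 1$ moves every point), and it is mixing because for any fixed $x, y \in \Gamma$ the equation $\gamma x = y$ has the unique solution $\gamma = y x^{-1}$, so $d(\gamma x, y) = 1$ for all but at most one $\gamma$. Given $\Sigma \leq \Lambda \leq \Gamma$ with $\Sigma$ highly core-free in $\Lambda$ in the subgroup sense from the introduction, the remark following the action-based definition of high core-freeness identifies this with high core-freeness of $\Sigma$ for the $\Lambda$-action on $\Gamma$ (the action is free and the metric is discrete, so the equivalence cited from \cite{zbMATH06503094} applies). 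Proposition \ref{RelativeInducedAction}, applied to $(\Gamma, d)$ with this action, then produces an inductive limit $\Gamma^\Gamma_\infty$ isometric to $\U_S$ by assertion (1), carrying a $\Gamma$-action that is strongly free by (4), mixing by (2), has the extension property by (5), and preserves high core-freeness for every relevant pair $(\Sigma, \Lambda)$ by (3). That action on $\U_S$ is the one sought.

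\textbf{Uniqueness.} Given any two actions $\pi_1, \pi_2 \colon \Gamma \curvearrowright \U_S$ which are strongly free, mixing, and have the extension property, I observe that the empty map vacuously qualifies as a $(\pi_1, \pi_2)$-partial isometry (take $F_1 = F_2 = \emptyset$). Proposition \ref{Extension} then supplies a $(\pi_1, \pi_2)$-equivariant isometry of $\U_S$, which is by definition a conjugacy between the two actions.

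\textbf{Main obstacle.} The only step requiring real care is the high core-freeness claim for the seed action. Unwinding the definition, one must find, for every finite $F \subseteq \Gamma$, some $\lambda \in \Lambda$ with $\lambda F \cap \Sigma F = \emptyset$ and $\sigma \lambda x \neq \lambda y$ for all $x, y \in F$ and $\sigma \in \Sigma \setminus \{1\}$. The second condition is exactly injectivity of $(\sigma, x) \mapsto \sigma \lambda x$ on $\Sigma \times F$, while the first is a disjointness condition. The remark in the preliminaries packages these into the known equivalence with the subgroup-theoretic notion of highly core-free (for a free action in the discrete metric), and this equivalence is exactly what is attributed to \cite{zbMATH06503094}. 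Once that translation is in hand, the remainder of the proof reduces to a direct invocation of the two main propositions developed in Section \ref{sec: strongly free action on Urysohn}, so no further technical obstacle is expected.
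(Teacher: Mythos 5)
Your proposal is correct and follows essentially the same route as the paper: existence via the left-translation action of $\Gamma$ on itself with the discrete metric, fed into Proposition \ref{RelativeInducedAction} (with item (3) handling high core-freeness, using the discrete-metric/free-action equivalence from \cite{zbMATH06503094}), and uniqueness via Proposition \ref{Extension} applied to the empty map. No gaps to report.
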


\begin{proof}
Uniqueness follows from Proposition \ref{Extension} applied to the empty map. To prove existence of such an action, and that it has the property mentioned above,
start from $\Gamma$ acting on $X=\Gamma$ by left translation where we equip $X$ with the discrete metric $d(x,y)=1$ for $x\neq y$ and $d(x,x)=0$. It is clear that $\Gamma\curvearrowright X$ is strongly free and mixing. Consider $\Gamma\curvearrowright X^\Gamma_\infty$, then by the previous proposition this action is a strongly free mixing action on the $S$-Urysohn space with the extension property. Now fix subgroups $\Sigma\leq\Lambda\leq\Gamma$ and suppose that $\Sigma\leq\Lambda$ is a highly core-free subgroup. It is clear that $\Sigma$ is highly core-free for $\Lambda\curvearrowright X$, since $d$ is the discrete metric and $\Lambda\curvearrowright X$ is free.  It follows from item (3) of the above proposition that $\Sigma$ is highly core-free for $\Lambda\curvearrowright X^\Gamma_\infty$.
\end{proof}


\section{Actions of amalgamated free products on bounded Urysohn spaces}
Let $\Gamma_1,\Gamma_2$ be two countable groups with a common subgroup $\Sigma$ and define $\Gamma=\Gamma_1\underset{\Sigma}{*}\Gamma_2$. Suppose that we have a faithful action $\Gamma\curvearrowright\U_S$ and view $\Gamma$ as a subgroup of $\Iso(\U_S)$.

Let $Z:=\{\alpha\in\autUQone \colon  \forall\sigma\in\Sigma \  \alpha\sigma=\sigma\alpha \}$. Note that $Z$ is a closed subgroup of $\autUQone$, hence a Polish group. Moreover, for all $\alpha\in Z$, there exists a unique group homomorphism $\pi_{\alpha} \colon  \Gamma \to \autUQone$ such that
$$\pi_{\alpha}(g)=\left\{\begin{array}{lcl}g&\text{if}&g\in\Gamma_1,\\
\alpha^{-1}g\alpha&\text{if}&g\in\Gamma_2.\end{array}\right.$$

 When $\Sigma$ is trivial, we have $Z=\autUQone$. In this section we prove the following result.

\begin{theorem}\label{ThmMain} If $\Gamma\curvearrowright\UQone$ is free with the extension property, $\Sigma\curvearrowright\UQone$ is strongly free, mixing and $\Sigma$ is highly core-free for $\Gamma_1,\Gamma_2\curvearrowright\U_S$ then the set:
$$O=\{\alpha\in Z\colon \pi_{\alpha}\text{ is homogeneous and faithful}\}$$
is dense $G_\delta$ in $Z$.
\end{theorem}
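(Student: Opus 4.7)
The plan is to write $O=O_1\cap O_2$ where $O_1=\{\alpha\in Z:\pi_\alpha\text{ is faithful}\}$ and $O_2=\{\alpha\in Z:\pi_\alpha(\Gamma)\text{ is dense in }\Iso(\U_S)\}$, and to show each piece is dense $G_\delta$ in the Polish group $Z$. One has
\[
O_1=\bigcap_{g\in\Gamma\setminus\{1\}}U_g,\quad U_g=\bigcup_{x\in\U_S}\{\alpha\in Z:\pi_\alpha(g)(x)\neq x\},
\]
\[
O_2=\bigcap_{\varphi\in P_f(\U_S)}V_\varphi,\quad V_\varphi=\bigcup_{g\in\Gamma}\{\alpha\in Z:\pi_\alpha(g)\supseteq\varphi\},
\]
and both are $G_\delta$: $\Gamma$ and $P_f(\U_S)$ are countable, and each $U_g$, $V_\varphi$ is open since its defining condition on $\alpha$ involves only finitely many values of $\alpha$. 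The crucial tool for the density arguments is Proposition \ref{Extension}, applied with $\pi_1=\pi_2$ equal to the $\Sigma$-action on $\U_S$: this action is strongly free and mixing by hypothesis and inherits the extension property from $\Gamma\curvearrowright\U_S$, so every $\Sigma$-equivariant finite partial isometry of $\U_S$ extends to a global $\Sigma$-equivariant isometry, i.e.\ an element of $Z$. Each density statement thus reduces to building a $\Sigma$-equivariant finite partial isometry $\tilde\alpha$ agreeing with $\alpha_0$ on a prescribed finite set $F_0$ and achieving the desired combinatorial property.

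For the density of $U_g$, if $g\in\Gamma_1\cup\Gamma_2$ then $\pi_\alpha(g)$ is either $g$ or $\alpha^{-1}g\alpha$, hence nontrivial by faithfulness of $\Gamma\curvearrowright\U_S$, so $U_g=Z$. Otherwise $g$ admits a reduced normal form $\sigma\gamma_1\cdots\gamma_n$ in $\Gamma_1\ast_\Sigma\Gamma_2$ with the $\gamma_i$ alternating between nontrivial cosets of $\Sigma$ in $\Gamma_1$ and $\Gamma_2$. Using mixing of $\Sigma\curvearrowright\U_S$, one chooses a test point $x_0$ whose $\Sigma$-orbit lies at distance $1$ from $F_0\cup\alpha_0(F_0)$. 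We then build $\tilde\alpha$ iteratively, simulating the action of $\pi_{\tilde\alpha}(g)$ on $x_0$ letter by letter: each time a new value of $\tilde\alpha$ or $\tilde\alpha^{-1}$ must be prescribed at some point $y$, we select it from a fresh $\Sigma$-orbit furnished by the highly core-free property of $\Sigma$ in the relevant factor $\Gamma_i$ applied to the finite set of previously fixed data. This ensures the partial isometry property is preserved at each step and forces the final output $\pi_{\tilde\alpha}(g)(x_0)$ to lie at distance $1$ from $x_0$, so $\pi_\alpha(g)(x_0)\neq x_0$ for any extension $\alpha$ of $\tilde\alpha$, which we obtain via Proposition \ref{Extension}.

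For the density of $V_\varphi$, given $\varphi\colon F_1\to F_2$ a finite partial isometry of $\U_S$, the aim is to produce $\alpha$ close to $\alpha_0$ and $g\in\Gamma$ with $\pi_\alpha(g)\supseteq\varphi$. One selects a short word $g$ in alternating letters from $\Gamma_1\setminus\Sigma$ and $\Gamma_2\setminus\Sigma$ (the simplest candidate being a single $h\in\Gamma_2\setminus\Sigma$, for which the extension condition reads $h\alpha(x)=\alpha\varphi(x)$ for $x\in F_1$), and then prescribes $\tilde\alpha$ on the finite set of points dictated by this condition, choosing new images inside fresh $\Sigma$-orbits supplied by the highly core-free property of $\Sigma$ in $\Gamma_2$. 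When $\varphi$ has fixed points, cycles, or other overlaps between $F_1$, $F_2$, and $F_0$ that prevent a one-letter word from working, the construction instead uses a longer alternating word whose induced constraints on $\tilde\alpha$ can be solved by a combination of fresh choices, using highly core-freeness of $\Sigma$ in both $\Gamma_1$ and $\Gamma_2$. Once $\tilde\alpha$ is built, Proposition \ref{Extension} extends it to the desired $\alpha\in V_\varphi$. The main technical difficulty throughout is the bookkeeping in these iterative constructions: at each step one must verify that the $\Sigma$-equivariant extension remains a partial isometry and that the distance-$1$ property propagates, which ultimately rests on the interplay between strong freeness of $\Sigma$ (avoiding collisions inside $\Sigma$-orbits) and highly core-freeness of $\Sigma$ in $\Gamma_1,\Gamma_2$ (producing fresh orbits for the cross-factor steps).
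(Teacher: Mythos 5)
Your overall architecture is the same as the paper's (split $O$ into the faithful part and the homogeneous part, check both are $G_\delta$, and reduce each density statement to producing a $\Sigma$-equivariant finite partial isometry which is globalized by Proposition \ref{Extension}); the gaps are in the two density arguments themselves. For faithfulness, your plan is to simulate $\pi_{\tilde\alpha}(g)x_0$ letter by letter and to choose the successive values of $\tilde\alpha^{\pm1}$ ``fresh'' so that the final output ends at distance $1$ from $x_0$. This step fails as stated: whenever you must define $\tilde\alpha$ or $\tilde\alpha^{-1}$ at a point that is \emph{not} fresh --- e.g.\ at $a x_0$ for a letter $a\in\Gamma_1$ of $g$, or at $cz$ for $c\in\Gamma_2$ and $z$ a previously chosen value --- all distances from the new value to the already defined configuration are forced (pulled back through the partial isometry), and since the $\Gamma$-action is only assumed free, not strongly free, quantities like $d(ax_0,x_0)$ or $d(cz,z)$ can be arbitrarily small; one checks that the Kat\v{e}tov compatibility condition then prohibits prescribing distance $1$ to $x_0$ at the last step, and even getting the weaker conclusion ``$\neq x_0$'' requires an inductive bookkeeping you do not supply. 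Note also that you never use two hypotheses of the theorem, namely that $\Gamma\curvearrowright\UQone$ is free and has the extension property; this is exactly what the paper's Lemma \ref{LemFaithful} exploits to avoid the above problem: one finds a single point $x$ with $d(x,u)=1$ for all $u\in\Gamma(F\cup\alpha(F))$ (extension property of the $\Gamma$-action applied to the constant-$1$ finitely supported extension), defines $\gamma_0$ to be $\alpha$ on $\Sigma(F\cup\alpha(F))$ and the identity on the $\Sigma$-orbits of the trajectory points $g_{i_l}\cdots g_{i_1}x$ (all cross-distances are $1$ because these points lie in $\Gamma x$), extends by Proposition \ref{Extension}, and gets $\pi_\gamma(g)x=gx\neq x$ by freeness of the $\Gamma$-action. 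Two smaller slips in the same paragraph: mixing of $\Sigma$ does not give a point whose whole $\Sigma$-orbit is at distance $1$ from a finite set (that is again the extension property of the $\Sigma$-action), and fresh target points are not ``furnished by the highly core-free property'', which produces group elements and is not used in the faithfulness lemma at all.

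For homogeneity your text is a plan rather than a proof: the content of Lemma \ref{LemHomogeneous} is precisely the explicit word and the explicit $\Sigma$-equivariant partial isometry, and that is what is missing. Moreover your ``simplest candidate'' $g=h\in\Gamma_2\setminus\Sigma$, with condition $h\alpha(x)=\alpha(\varphi(x))$, fails in general even when $F_0$, $\dom\varphi$, $\rng\varphi$ are pairwise disjoint: the values $\alpha(\varphi(x))$ are then forced, and one needs $d(\alpha(x),h\alpha(y))=d(x,\varphi(y))$ for all $x,y\in\dom\varphi$, a metric constraint on the displacement geometry of $h$ that the hypotheses give no control over; the obstruction is metric, not merely the presence of fixed points, cycles or overlaps. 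The paper resolves this by using the three-letter word $g=h_1^{-1}g_2g_1$ where $g_1,h_1\in\Gamma_1$ (which are not conjugated by $\alpha$, hence cost nothing) are produced by highly core-freeness of $\Sigma$ in $\Gamma_1$ so that $g_1\dom\varphi$ and $h_1\rng\varphi$ sit in fresh $\Sigma$-orbits at distance $1$ from all relevant data, and $g_2\in\Gamma_2$ is produced by highly core-freeness in $\Gamma_2$; the resulting $\beta_0$ (equal to $\alpha$ on $\Sigma F'$ and sending $\sigma h_1\varphi(x)$ to $\sigma g_2\alpha(g_1x)$) only involves distance-$1$ configurations, so isometricity and $\Sigma$-equivariance are immediate. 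Saying that the constraints of ``a longer alternating word can be solved by a combination of fresh choices'' is exactly the assertion that needs proof, so as it stands your argument for both lemmas has genuine gaps.
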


\noindent 
\begin{corollary}\label{cor: amalgamated examples}
Assume $\Gamma_1,\Gamma_2$ are two countable groups with a common subgroup $\Sigma$, highly core-free in both $\Gamma_1,\Gamma_2$. Then $\Gamma_1\underset{\Sigma}{*}\Gamma_2$ belongs to $\HUS$. In particular, the following results hold.
\begin{itemize}
\item For any infinite countable groups $\Gamma_1,\Gamma_2$ we have $\Gamma_1*\Gamma_2\in\HUS$.
\item For any i.c.c. groups $\Gamma_1,\Gamma_2$ with a common finite subgroup $\Sigma$ we have $\Gamma_1\underset{\Sigma}{*}\Gamma_2\in\HUS$.
\item If $\Sigma_g$ is a closed, orientable surface of genus $g>1$, then $\pi_1(\Sigma_g)\in\HUS$.
\end{itemize}
\end{corollary}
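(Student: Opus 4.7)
The plan is to derive this corollary as a nearly formal consequence of Theorem \ref{ThmMain} together with Corollary \ref{ActionU}, and then verify the highly core-free hypothesis in each of the three specializations.

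First I would set $\Gamma = \Gamma_1 \underset{\Sigma}{*} \Gamma_2$ and apply Corollary \ref{ActionU} to produce a strongly free, mixing action $\Gamma \curvearrowright \U_S$ with the extension property. Since $\Sigma$ is highly core-free in each $\Gamma_i$ (viewed as a subgroup of $\Gamma$) by hypothesis, the same corollary also guarantees that $\Sigma$ is highly core-free for each restricted action $\Gamma_i \curvearrowright \U_S$. The restriction of the $\Gamma$-action to $\Sigma$ inherits strong freeness and mixing, and a strongly free action is in particular free, so every hypothesis of Theorem \ref{ThmMain} is in place. That theorem then supplies a dense $G_\delta$ subset $O \subseteq Z$ such that for every $\alpha \in O$ the homomorphism $\pi_\alpha$ is faithful and yields a homogeneous action. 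By the Baire category theorem $O$ is nonempty, and any $\alpha \in O$ witnesses $\Gamma \in \HUS$.

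It remains to check the highly core-free hypothesis in each special case. For the first, with $\Gamma_1, \Gamma_2$ infinite, take $\Sigma = \{1\}$: the trivial subgroup of any infinite group is highly core-free, as recorded right after Definition \ref{def: HCF intro}. For the second, a finite subgroup of an i.c.c. group is highly core-free, from the same discussion. For the third, I would realize $\pi_1(\mathcal{S}_g)$ with $g > 1$ as an amalgamated free product of two free groups over a cyclic subgroup by using the standard presentation
\[
\pi_1(\mathcal{S}_g) = \langle a_1, b_1, \ldots, a_g, b_g \mid [a_1,b_1]\cdots[a_g,b_g]\rangle
\]
and splitting the generators into two blocks: for $1 \le h \le g-1$,
\[
\pi_1(\mathcal{S}_g) \cong \mathbb{F}_{2h} \underset{\langle w \rangle}{*} \mathbb{F}_{2(g-h)},
\]
where $w$ is identified with $[a_1,b_1]\cdots[a_h,b_h]$ on one side and with $([a_{h+1},b_{h+1}]\cdots[a_g,b_g])^{-1}$ on the other. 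In the simplest case $g=2$, $h=1$ this becomes $\mathbb{F}_2 \underset{\langle [a,b]\rangle}{*} \mathbb{F}_2$, and the paper already records, with reference to \cite[Ex.~1.11]{zbMATH06503094}, that $\langle [a,b]\rangle$ is highly core-free in $\mathbb{F}_2$. Higher genus is handled by taking $h=1$ on one side and invoking the parallel fact that the cyclic subgroup generated by a nontrivial product of commutators is highly core-free in the ambient free group.

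Since the bulk of the work is already carried by Theorem \ref{ThmMain} and Corollary \ref{ActionU}, the only real obstacle is the surface-group specialization, where one must produce an explicit amalgam decomposition and invoke the (external) highly core-free property of the appropriate cyclic subgroups of free groups. For the other two specializations the verification is immediate from the definition of a highly core-free subgroup.
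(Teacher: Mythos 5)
For the main statement and the first two bullet points your argument coincides with the paper's: combine Corollary \ref{ActionU} (which produces a strongly free, mixing action of $\Gamma_1\underset{\Sigma}{*}\Gamma_2$ on $\U_S$ with the extension property, for which $\Sigma$ is highly core-free for each $\Gamma_i\curvearrowright\U_S$), apply Theorem \ref{ThmMain} and Baire, and quote the standard examples of highly core-free subgroups (trivial subgroup of an infinite group, finite subgroup of an i.c.c.\ group). That part is fine and is essentially the paper's proof.

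The surface-group case is where you diverge, and where there is a gap. Your decomposition $\pi_1(\mathcal S_g)\cong\mathbb F_{2h}\underset{\langle w\rangle}{*}\mathbb F_{2(g-h)}$ is correct as a splitting, but to feed it into the corollary you need $\langle w\rangle$ to be highly core-free in \emph{both} factors; for $g\geq 3$ this requires knowing that the cyclic subgroup generated by a product of at least two commutators is highly core-free in $\mathbb F_{2(g-1)}$. You invoke this as a ``parallel fact,'' but neither this paper nor the reference it uses records it: the only fact available is that $\langle[a,b]\rangle$ is highly core-free in $\mathbb F_2$ (\cite[Ex.~1.11]{zbMATH06503094}). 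The claim is plausible (such an element is not a proper power, so it generates a maximal, hence malnormal, cyclic subgroup), but as written it is an unproven step, so your argument only establishes the genus~$2$ case. The paper avoids this issue entirely: it proves $\pi_1(\mathcal S_2)\in\HUS$ from the $\mathbb F_2\underset{\langle[a,b]\rangle}{*}\mathbb F_2$ splitting, then notes by covering theory that every $\pi_1(\mathcal S_g)$, $g\geq 2$, is a finite-index subgroup of $\pi_1(\mathcal S_2)$, and concludes with Corollary \ref{cor: restrictions HUS}, which guarantees that finite-index subgroups of groups in $\HUS$ again lie in $\HUS$. Either fill in a proof of the highly core-free property for products of commutators, or switch to the covering-theory argument, which needs no new input.
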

\begin{proof}
The first two results are a direct consequence of Theorem \ref{ThmMain} Corollary \ref{ActionU} and the examples of highly core-free subgroups given in \cite[Lem. 1.10]{zbMATH06503094}. Moreover, since in the free group over two generators $\mathbb F_2=\la a,b\ra$, the cyclic group generated by $[a,b]$ is highly core-free \cite[Ex. 1.11]{zbMATH06503094}, we also get that $\pi_1(\Sigma_2)$ belongs to $\HUS$. By covering theory, the latter admits every $\pi_1(\Sigma_g)$ for $g\geq 2$ as a finite index subgroup, so by Corollary \ref{cor: restrictions HUS} all such groups belong to $\HUS$.
\end{proof}

To prove the theorem, we establish two lemmas; the theorem follows by applying the Baire category theorem.

\begin{lemma}\label{LemHomogeneous}
Assume $\Sigma\curvearrowright\UQone$ is strongly free, mixing, has the extension property and $\Sigma$ is highly core-free for $\Gamma_1,\Gamma_2\curvearrowright\UQone$. Then the set $U=\{\alpha\in Z\colon \pi_{\alpha}\text{ is homogeneous}\}$ is dense $G_\delta$ in $Z$.
\end{lemma}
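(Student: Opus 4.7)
The plan is to enumerate all finite partial isometries $(\varphi_n)_{n\in\N}$ of the countable space $\US$ and, for each such $\varphi$ with $F:=\dom\varphi$, set
$V_{\varphi} := \{\alpha \in Z : \exists \gamma \in \Gamma,\ \pi_\alpha(\gamma) \supseteq \varphi\}$, so that $U = \bigcap_n V_{\varphi_n}$. Openness of $V_{\varphi}$ in $Z$ is immediate: for a fixed word $\gamma = h_1 \cdots h_k$ in $\Gamma_1 \cup \Gamma_2$, the map $\pi_\alpha(\gamma)|_F$ depends only on the values of $\alpha$ and $\alpha^{-1}$ at finitely many points, so the condition $\pi_\alpha(\gamma) \supseteq \varphi$ is preserved when $\alpha$ is perturbed on a sufficiently small basic neighborhood in $Z$. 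Once density of each $V_\varphi$ is proved, the Baire category theorem closes the argument.

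To establish density, fix $\alpha_0 \in Z$ and a basic open neighborhood of $\alpha_0$, which, after enlarging the distinguished finite set so as to absorb the inverse, can be written as $\mathcal W = \{\alpha \in Z : \alpha|_{A_0} = \alpha_0|_{A_0}\}$ for some $A_0 \Subset \US$. I look for $\gamma$ of the shape $g_1 g_2 g_3$ with $g_1, g_3 \in \Gamma_1$ and $g_2 \in \Gamma_2$; then $\pi_\alpha(\gamma)=g_1\alpha^{-1}g_2\alpha g_3$ and the condition $\pi_\alpha(\gamma)|_F=\varphi$ rewrites as $\alpha(g_1^{-1}\varphi(x))=g_2\alpha(g_3 x)$ for all $x\in F$. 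Choosing a length-$3$ word rather than length $2$ is what allows us to place both the \emph{source} $g_3 F$ and the \emph{target} $g_1^{-1}\varphi(F)$ into generic position via highly core-freeness in $\Gamma_1$ used twice.

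Concretely, I first apply the high core-freeness of $\Sigma$ for $\Gamma_1\curvearrowright\US$ to $A_0\cup F\cup \varphi(F)$ to pick $g_3\in\Gamma_1$ such that $P:=g_3 F$ has free $\Sigma$-action and sits at distance $1$ from $\Sigma(A_0\cup F\cup \varphi(F))$; then apply it again to $A_0\cup F\cup\varphi(F)\cup P$ to pick $g_1\in \Gamma_1$ so that $Q:=g_1^{-1}\varphi(F)$ has free $\Sigma$-action and lies at distance $1$ from $\Sigma(A_0\cup P)$. On the image side, using that $\Sigma\curvearrowright\US$ is mixing with the extension property inherited from the $\Gamma$-action, iteratively construct points $(y_x)_{x\in F}$ in $\US$ such that $\Sigma\{y_x:x\in F\}$ has free $\Sigma$-action with all pairwise cross-distances equal to $1$ and sits at distance $1$ from $\alpha_0(\Sigma A_0)$; finally, use high core-freeness of $\Sigma$ for $\Gamma_2\curvearrowright \US$ to pick $g_2\in\Gamma_2$ sending $\{y_x\}$ to a free $\Sigma$-orbit configuration at distance $1$ from $\Sigma\alpha_0(A_0)\cup\Sigma\{y_x\}$.

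Define a $\Sigma$-equivariant partial isometry $\alpha'$ on $\Sigma A_0\cup \Sigma P\cup \Sigma Q$ by $\alpha'=\alpha_0$ on $\Sigma A_0$, $\alpha'(\sigma g_3 x)=\sigma y_x$, and $\alpha'(\sigma g_1^{-1}\varphi(x))=\sigma g_2 y_x$. A short case check---each of the three pieces is an isometric copy on both sides, while every cross-piece distance equals $1$ on both sides---shows this is indeed a $\Sigma$-equivariant partial isometry. Proposition \ref{Extension} (applied with $\pi_1=\pi_2$ equal to $\Sigma\curvearrowright\US$) then extends $\alpha'$ to an isometry $\alpha\in Z$. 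By construction $\alpha|_{A_0}=\alpha_0|_{A_0}$, and the computation $\pi_\alpha(g_1g_2g_3)(x)=g_1\alpha^{-1}g_2\alpha(g_3 x)=g_1\alpha^{-1}(g_2 y_x)=g_1(g_1^{-1}\varphi(x))=\varphi(x)$ for $x\in F$ yields $\alpha\in V_\varphi\cap\mathcal W$. The main obstacle is essentially combinatorial bookkeeping: invoking high core-freeness and the extension property in the right order so that $P$, $Q$, $\{y_x\}$ and $g_2\{y_x\}$ sit in mutually ``free and at distance $1$'' configurations, enabling the three pieces defining $\alpha'$ to be glued into a single $\Sigma$-equivariant partial isometry.
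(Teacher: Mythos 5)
Your overall architecture (Baire category over the open sets $V_\varphi$, a word of shape $\Gamma_1\cdot\Gamma_2\cdot\Gamma_1$, two uses of high core-freeness in $\Gamma_1$, one in $\Gamma_2$, then gluing a $\Sigma$-equivariant partial isometry and extending it via Proposition \ref{Extension}) is exactly the paper's strategy. The genuine gap is in your construction of the middle configuration $\{y_x\}_{x\in F}$. For the map $\sigma g_3 x\mapsto \sigma y_x$ (and likewise $\sigma g_1^{-1}\varphi(x)\mapsto\sigma g_2 y_x$) to be isometric, you need, besides the ``free $\Sigma$-configuration'' and distance-$1$ separation conditions, the within-copy condition $d(y_x,y_{x'})=d(x,x')$ for all $x,x'\in\dom\varphi$. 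Your stated requirement that $\Sigma\{y_x\}$ have ``all pairwise cross-distances equal to $1$'' either says nothing about $d(y_x,y_{x'})$ (in which case the claim that ``each of the three pieces is an isometric copy on both sides'' is unjustified) or, if read as making all distances inside $\Sigma\{y_x\}$ equal to $1$, actively contradicts what is needed, since the source piece $g_3\dom\varphi$ carries the distances of $\dom\varphi$, which are in general not all $1$. A second, related problem is that building such points $y_x$ requires realizing a prescribed extension over the infinite $\Sigma$-invariant set $\alpha_0(\Sigma A_0)$, and you justify this by ``the extension property inherited from the $\Gamma$-action'', which is not among the hypotheses of this lemma (it is a hypothesis of Theorem \ref{ThmMain}, not of Lemma \ref{LemHomogeneous}).

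Both issues disappear if you simply take $y_x:=\alpha_0(g_3 x)$, i.e.\ declare $\alpha'=\alpha_0$ on all of $\Sigma(A_0\cup g_3\dom\varphi)$: since $\alpha_0$ is an isometry commuting with $\Sigma$, the set $\alpha_0(g_3\dom\varphi)$ is automatically an isometric copy of $\dom\varphi$, its $\Sigma$-translates are pairwise at distance $1$, and it lies at distance $1$ from $\alpha_0(\Sigma A_0)$, all by the choice of $g_3$ alone; then apply high core-freeness of $\Sigma$ for $\Gamma_2$ to $\alpha_0(A_0\cup g_3\dom\varphi)$ to get $g_2$. This is precisely what the paper does (with $F'=F\cup g_1\dom\varphi$ and the middle configuration $\alpha(g_1\dom\varphi)$), and the rest of your argument — the cross-distance checks, the application of Proposition \ref{Extension}, and the final computation $\pi_\alpha(g_1g_2g_3)|_{\dom\varphi}=\varphi$ — then goes through as you wrote it.
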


\begin{proof} For every finite partial isometry $\varphi$, consider the following open set
\[
U_\varphi:=\{\alpha\in Z\colon \exists g\in\Gamma\text{ such that }\pi_{\alpha}(g)_{\restriction \dom(\varphi)}=\varphi\}
.\]
Then $U=\bigcap_{\varphi\in P_f(\U_S)}U_\varphi$ so by the Baire category theorem it suffices to show that $U_\varphi$ is dense for every finite partial isometry $\varphi$. Let $\varphi\in P_f(\U_S)$, $\alpha\in Z$ and $F\Subset\U_S$. 
We need to prove that there are $\beta\in Z$ and $g\in\Gamma$ such that $\beta_{\restriction_F}=\alpha_{\restriction F}$ and ${\pi_\beta(g)}_{\restriction \dom(\varphi)}=\varphi$. 

Since $\Sigma$ is highly core-free for $\Gamma_1\curvearrowright\UQone$, there is $g_1\in\Gamma_1$ such that 
$$\forall u \in \Sigma F \ \forall x,x' \in \dom\varphi \ \forall \sigma\in\Sigma\setminus\{1\} \quad d(g_1x,u)=1=d(\sigma g_1x, g_1x'). $$
Define $F'= F \cup g_1\dom(\varphi)$. There exists $h_1\in\Gamma_1$ such that 
$$\forall u\in \Sigma F' \ \forall y,y'\in \rng(\varphi) \ \forall \sigma\in\Sigma \setminus \{1\} \quad d(h_1y,u)=1=d(\sigma h_1y,h_1y'). $$ 

Since $\Sigma$ is highly core-free for $\Gamma_2\curvearrowright\UQone$, there is $g_2\in\Gamma_2$ such that
$$ \forall u,v \in \alpha F' \ \forall w \in \Sigma \alpha F' \ \forall \sigma\in\Sigma \setminus \{1\} \quad d(\sigma g_2u, g_2v)=1=d(g_2u,w).$$
Define
$ A=\Sigma F' \sqcup \Sigma h_1\rng(\varphi)$ and $B=\alpha(\Sigma F')  \sqcup \Sigma g_2\alpha(g_1\dom(\varphi))$.

Note that $\Sigma A= A$ and, since $\alpha\in Z$, $\Sigma B=B$. 
Since the $\Sigma$-action is free, we can define a $\Sigma$-equivariant map $\beta_0\colon A\rightarrow B$ by setting
\begin{itemize}
\item $\forall u\in\Sigma F'  \quad \beta_0(u)=\alpha(u)$. 
\item $\forall x \in \dom(\varphi) \ \forall \sigma \in \Sigma \quad \beta_0(\sigma h_1\varphi(x))=\sigma g_2\alpha(g_1x)$. 
\end{itemize}
By the careful choices of the elements $g_1,g_2,h_1$ the map $\beta_0$ is a $\Sigma$-equivariant partial isometry. Since the $\Sigma$ action is strongly free and mixing with the extension property, we may apply Proposition \ref{Extension} to get an extension $\beta\in Z$ of $\beta_0$. Note that $\beta_{\restriction F}=\alpha_{\restriction F}$. Letting $g=h_1^{-1}g_2g_1\in \Gamma$ we have, for all $x\in \dom(\varphi)$, 
\[
\pi_\beta(g)x = h_1^{-1}\beta^{-1}g_2\beta g_1x
		    = h_1^{-1}\beta^{-1}g_2\alpha(g_1x)
		    = h_1^{-1}h_1 \varphi(x)
		    =\varphi(x)
\]
as wanted.
\end{proof}

\begin{lemma}\label{LemFaithful}
If $\Gamma\curvearrowright\UQone$ is free with the extension property, $\Sigma\curvearrowright\mathbb{U}_S$ is strongly free and mixing then the set $V=\{\alpha\in Z\colon \pi_{\alpha}\text{ is faithful}\}$ is a dense $G_\delta$ in $Z$.
\end{lemma}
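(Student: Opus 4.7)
The overall plan is to mirror the Baire category strategy used for Lemma \ref{LemHomogeneous}. Write $V = \bigcap_{g \in \Gamma \setminus \{1\}} V_g$ where $V_g := \{\alpha \in Z : \pi_\alpha(g) \neq \id_{\U_S}\}$. Each $V_g$ is open: for fixed $g$ and $x \in \U_S$, the point $\pi_\alpha(g)x$ depends on $\alpha$ through only finitely many evaluations of $\alpha$ and $\alpha^{-1}$, so $\{\alpha : \pi_\alpha(g)x \neq x\}$ is open in $Z$, and $V_g$ is the union of these sets over $x \in \U_S$. So the substance is to prove density of each $V_g$; combined with countability of $\Gamma$, the Baire category theorem then yields that $V$ is dense $G_\delta$.

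Given $g \in \Gamma \setminus \{1\}$, $\alpha_0 \in Z$, and $F \Subset \U_S$, I would produce $\beta \in Z$ with $\beta|_F = \alpha_0|_F$ and $\pi_\beta(g) \neq \id$. First put $g$ in amalgamated reduced form $g = h_1 \cdots h_n$ with $h_i \in \Gamma_{\epsilon_i}$, alternating parities, and $h_i \notin \Sigma$ for $i \geq 2$ (a leading $\Sigma$-factor is absorbed into $h_1$). The length-one case $n = 1$ is immediate: $\pi_\beta(g)$ equals either $h_1$ (if $h_1 \in \Gamma_1$) or $\beta^{-1} h_1 \beta$ (if $h_1 \in \Gamma_2$), and both are nontrivial since the original $\Gamma$-action is faithful.

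For the main case $n \geq 2$, I would build $\beta$ via a trajectory construction, tracing the orbit of a test point under $\pi_\beta(g)$. Pick a point $x \in \U_S$ whose $\Sigma$-orbit lies at distance $1$ from $\Sigma F \cup \alpha_0(\Sigma F)$, which is available because the $\Sigma$-action inherits the extension property from $\Gamma$ and is strongly free. Then define iteratively $x = x_n, x_{n-1}, \ldots, x_0$ together with auxiliary points $u_i$ (for each $i$ with $h_i \in \Gamma_2$, intended as $\beta(x_i)$) by the rule: if $h_i \in \Gamma_1$, set $x_{i-1} := h_i x_i$; if $h_i \in \Gamma_2$, pick $u_i$ fresh and declare $\beta(x_i) := u_i$, then pick $x_{i-1}$ fresh and declare $\beta(x_{i-1}) := h_i u_i$. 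Here ``fresh'' means that the $\Sigma$-orbit of the new point lies at distance $1$ from a finite, growing collection of already-involved $\Sigma$-orbits---namely those containing $\Sigma F$, $\alpha_0(\Sigma F)$, $x_n = x$, and every previously chosen $x_j, u_j, h_j u_j$ together with their $h_i^{\pm 1}$-translates for the $\Gamma_1$-indices. Each such choice exists by the extension property of $\Gamma \actson \U_S$, and in particular the construction forces $x_0 \neq x_n$.

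The resulting partial map $\beta_0$---agreeing with $\alpha_0$ on $\Sigma F$, sending $\Sigma x_i$ to $\Sigma u_i$, and $\Sigma x_{i-1}$ to $\Sigma h_i u_i$ for every $i$ with $h_i \in \Gamma_2$, all extended $\Sigma$-equivariantly---is a well-defined $(\Sigma,\Sigma)$-partial isometry: freshness makes every cross-orbit distance equal to $1$ on both sides, and strong freeness handles distances inside a single $\Sigma$-orbit. Proposition \ref{Extension} applied with $\pi_1 = \pi_2$ being the strongly free, mixing $\Sigma$-action (which has the extension property) then extends $\beta_0$ to a $\Sigma$-equivariant isometry $\beta \in Z$; by construction $\pi_\beta(g)x = x_0 \neq x$, so $\beta \in V_g$ and $\beta|_F = \alpha_0|_F$. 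The main obstacle is the bookkeeping in the trajectory construction: at each inductive step one must enlarge the list of orbits to avoid so that no two $\beta$-prescriptions collide and so that the partial isometry conditions remain consistent; the distance-$1$ feature granted by the extension property combined with the mixing hypothesis on $\Sigma$ is exactly what makes this bookkeeping go through routinely.
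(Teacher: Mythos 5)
Your global strategy coincides with the paper's (write $V=\bigcap_g V_g$, dispatch single letters by faithfulness, build a $\Sigma$-commuting partial isometry between finite unions of $\Sigma$-orbits and extend it with Proposition \ref{Extension}), but the partial map $\beta_0$ you construct is not an isometry, so the density argument breaks down. The culprit is the claim that ``freshness makes every cross-orbit distance equal to $1$ on both sides'': on the domain side this is false, because your rules force $\Gamma_1$-translates of already chosen points into the domain with independently prescribed images. Concretely, suppose $h_i,h_{i-2}\in\Gamma_2\setminus\Sigma$ and $h_{i-1}\in\Gamma_1\setminus\Sigma$ (this happens as soon as the reduced word has two $\Gamma_2$-letters). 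Then $x_{i-1}$ is in the domain with image $h_iu_i$, and $x_{i-2}=h_{i-1}x_{i-1}$ is in the domain with image the fresh point $u_{i-2}$; your freshness requirement forces $d(u_{i-2},h_iu_i)=1$, whereas the domain distance $d(x_{i-2},x_{i-1})=d(h_{i-1}x_{i-1},x_{i-1})$ is completely uncontrolled: the $\Gamma$-action is only assumed free (not strongly free), and this quantity cannot be prescribed when you choose $x_{i-1}$, since it is not a Kat\v{e}tov condition on already existing points ($h_{i-1}x_{i-1}$ only comes into existence together with $x_{i-1}$). The same mismatch already appears for $n=2$ when the rightmost letter lies in $\Gamma_1$: then $h_2x$ enters the domain with fresh image $u_1$, but you only chose $x$ far from $\Sigma F\cup\alpha_0(\Sigma F)$, so $d(h_2x,\Sigma F)=d(x,h_2^{-1}\Sigma F)$ is uncontrolled while the corresponding image distance is forced to be $1$. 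Adding $h_i^{\pm1}$-translates to the avoidance list does not help, because the problem lives entirely inside the domain.

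The paper sidesteps all of this with one device you are missing: using the extension property of the \emph{full} $\Gamma$-action (this is exactly where that hypothesis is used), it picks $x$ with $d(x,u)=1$ for every $u$ in the $\Gamma$-orbit of $F\cup\alpha(F)$, takes the domain to be $\Sigma(F\cup\alpha(F))$ together with $\Sigma x$ and the genuine trajectory orbits $\Sigma g_{i_l}\cdots g_{i_1}x$, and defines $\gamma_0$ to be $\alpha$ on $\Sigma F$ and the \emph{identity} on everything else. Since the trajectory points are actual $\Gamma$-orbit points fixed by $\gamma_0$, there are no image points to invent and no distances to match (all cross distances to $\Sigma F$ and $\alpha(\Sigma F)$ are $1$ because $x$ is far from the whole $\Gamma$-orbit of $F\cup\alpha(F)$); $\gamma_0$ commutes with $\Sigma$, Proposition \ref{Extension} yields $\gamma\in Z$, and then $\pi_\gamma(g)x=gx\neq x$ by freeness of $\Gamma\curvearrowright\US$. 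Your scheme could in principle be repaired by choosing each $u_i$ and $x_{i-1}$ to realize the pulled-back distance functions (finitely supported thanks to mixing) instead of constant distance $1$, but that requires a genuinely more delicate consistency check; the identity-on-the-trajectory trick avoids it entirely.
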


\begin{proof}
We can write $V$ as a countable intersection of open sets $V=\cap_{g\in\Gamma\setminus \{1\}}V_g$, where $V_g=\{\alpha\in Z\colon \pi_{\alpha}(g)\neq\id\}$, so by the Baire category theorem it suffices to show that $V_g$ is dense for all $g\in\Gamma\setminus \{1\}$. Since the action $\Gamma\curvearrowright\UQone$ is faithful we have $V_g=Z$ for all $g\in (\Gamma_1\cup\Gamma_2)\setminus\{1\}$ and it suffices to show that $V_g$ is dense for all $g$ reduced of length $\geq 2$.

Let $g=g_{i_n}\dots g_{i_1}$, where $n\geq 2$ and $g_{i_k}\in\Gamma_{i_k} \setminus \Sigma$. Fix $\alpha\in Z$ and $F\Subset\UQone$ and define $F'=F\cup\alpha(F)\Subset\UQone$. Since $\Gamma\curvearrowright\UQone$ has the extension property, there exists $x\in\UQone$ such that $d(x,u)=1$ for all $u\in\Gamma F'$. Define: 
\[
\begin{array}{ccccccccc}
A&:=&\Sigma F&\sqcup&(\bigsqcup _{l=1}^n\Sigma g_{i_l}\dots g_{i_1}x)&\sqcup&\Sigma x&\text{ and }\\B&:=&\alpha(\Sigma F)&\sqcup&(\bigsqcup _{l=1}^n\Sigma g_{i_l}\dots g_{i_1}x)&\sqcup&\Sigma x&
\end{array}
\]
Note that $\Sigma A=A$ and, since $\alpha\in Z$, $\Sigma B=B$. Define the bijection $\gamma_0 \colon  A\rightarrow B$ by ${\gamma_0}_{\restriction \Sigma F}=\alpha_{\restriction \Sigma F}$ and ${\gamma_0}_{\restriction A\setminus\Sigma F}=\id$. Note that by definition of $x$, any element of $A\setminus(\Sigma F)$ (resp. $B\setminus\alpha(\Sigma F)$) is at distance $1$ of any element of $\Sigma F$ (resp. $\alpha(\Sigma F)$). It follows that $\gamma_0$ is an isometry and, since $\alpha\in Z$, we have $\gamma_0\sigma=\sigma\gamma_0$ for all $\sigma\in\Sigma$. 

Our assumptions on the action of $\Sigma$ enable us to apply Proposition \ref{Extension}, and find an extension $\gamma\in Z$ of $\gamma_0$. Then $\gamma_{\restriction F}=\alpha_{\restriction F}$ and $\pi_\gamma(g_{i_n}\dots g_{i_1})x=g_{i_n}\dots g_{i_1}x\neq x$, since $g=g_{i_n}\dots g_{i_1}\neq 1$ and the $\Gamma$-action is free. Hence $\gamma\in V_g$.
\end{proof}

When one of the factors is finite, say $\Gamma_2$, then, as explained in \cite{zbMATH06503094}, $\Gamma_2$ has no highly core-free subgroups. However, we have a similar result in that case.

\begin{theorem}\label{ThmOneFiniteFactor}
Suppose that $\Gamma_2$ is finite and $[\Gamma_2:\Sigma]\geq 2$. If $\Gamma\curvearrowright\UQone$ is strongly free with the extension property and $\Sigma$ is highly core-free for $\Gamma_1\curvearrowright\UQone$ then the set $$O:=\{\alpha\in Z\colon \pi_{\alpha}\text{ is homogeneous and faithful}\}$$
is a dense $G_\delta$ in $Z$.
\end{theorem}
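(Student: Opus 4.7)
The plan is to mirror the structure of the proof of Theorem~\ref{ThmMain}: write $O = U \cap V$, where $U = \{\alpha \in Z : \pi_\alpha \text{ is homogeneous}\}$ and $V = \{\alpha \in Z : \pi_\alpha \text{ is faithful}\}$, and show each is dense $G_\delta$ in the Polish group $Z$; Baire then gives the result. For $V$, since $\Sigma \leq \Gamma_2$ is finite, the $\Sigma$-action on $\U_S$ is automatically mixing, and strong freeness of $\Gamma \curvearrowright \U_S$ passes to $\Sigma$. Thus Lemma~\ref{LemFaithful} applies verbatim and yields that $V$ is a dense $G_\delta$ in $Z$.

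The real work is to adapt Lemma~\ref{LemHomogeneous} to prove density of the open set $U_\varphi = \{\alpha \in Z : \exists g \in \Gamma,\ \pi_\alpha(g)|_{\dom\varphi} = \varphi\}$ for each $\varphi \in P_f(\U_S)$. Fix $\alpha \in Z$ and $F \Subset \U_S$, and set $\tilde F = F \cup \alpha(F)$. Exactly as in the original proof, use highly core-freeness of $\Sigma$ for $\Gamma_1 \curvearrowright \U_S$ to pick first $g_1 \in \Gamma_1$ with $d(g_1 x, u) = 1 = d(\sigma g_1 x, g_1 x')$ for $u \in \Sigma \tilde F$, $x,x' \in \dom\varphi$, $\sigma \in \Sigma\setminus\{1\}$; then, after enlarging to $F' = \tilde F \cup g_1 \dom\varphi$, pick $h_1 \in \Gamma_1$ with $d(h_1 y,u) = 1 = d(\sigma h_1 y, h_1 y')$ for $u \in \Sigma F'$, $y,y' \in \rng\varphi$, $\sigma \in \Sigma \setminus \{1\}$. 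Finally, since $[\Gamma_2:\Sigma] \geq 2$, fix any $g_2 \in \Gamma_2 \setminus \Sigma$ and set $g = h_1^{-1} g_2 g_1$.

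The central new ingredient, substituting for the (now unavailable) highly core-freeness of $\Sigma \leq \Gamma_2$, is the extension property of $\Gamma \curvearrowright \U_S$: it allows one to build points $(y_x)_{x\in\dom\varphi}$ in $\U_S$ such that (i) $d(y_x, y_{x'}) = d(x,x')$ for all $x,x' \in \dom\varphi$, and (ii) for every $x \in \dom\varphi$, the point $y_x$ lies at distance exactly $1$ from every element (other than $y_x$ itself) of the finite set
\[
\bigcup_{x'\in\dom\varphi}\bigl(\Sigma y_{x'} \cup \Sigma g_2 y_{x'} \cup g_2^{-1}\Sigma g_2\, y_{x'}\bigr) \cup \bigl(\Sigma\alpha(F)\cup g_2^{-1}\Sigma\alpha(F)\bigr).
\]
This is done inductively: enumerating $\dom\varphi=\{x_1,\dots,x_n\}$, the one-point extension to be realised at step $k$ takes value $d(x_k,x_j)$ on each already-chosen $y_{x_j}$ (with $j<k$) and value $1$ on the finite list of forbidden points built from $y_{x_1},\dots,y_{x_{k-1}}$ and $\alpha(F)$. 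The Kat\v{e}tov inequality is automatic since all distances in $S$ lie in $[0,1]$, and the triangle inequalities between previously chosen $y_{x_j}$'s reproduce the metric of $\dom\varphi$; the ``self-interactions'' of $y_{x_k}$ with $\Gamma_2 y_{x_k}\setminus\{y_{x_k}\}$ are automatically at distance $1$ by strong freeness of $\Gamma \curvearrowright \U_S$. The resulting distance function is finitely supported, hence realised in $\U_S$.

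With the $y_x$ in hand, define
\[
\beta_0\colon \Sigma F \sqcup \Sigma g_1 \dom\varphi \sqcup \Sigma h_1 \rng\varphi \to \U_S,\quad \sigma u\mapsto\sigma\alpha(u),\ \sigma g_1 x \mapsto \sigma y_x,\ \sigma h_1 \varphi(x) \mapsto \sigma g_2 y_x.
\]
Disjointness of the three pieces of both domain and range, and the fact that $\beta_0$ is a $\Sigma$-equivariant isometry, follow by a direct translation of the core-free distance-$1$ identities on the domain side to the distance-$1$ conditions on the $y_x$'s on the range side, together with $d(y_x,y_{x'})=d(x,x')$. Since $\Sigma$ is finite, the $\Sigma$-action on $\U_S$ is strongly free, mixing, and (as a subgroup action of $\Gamma$'s) has the extension property; hence Proposition~\ref{Extension} extends $\beta_0$ to some $\beta \in Z$. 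Then $\beta|_F=\alpha|_F$ and for every $x\in \dom\varphi$,
\[
\pi_\beta(g)(x) = h_1^{-1}\beta^{-1}g_2\beta(g_1 x) = h_1^{-1}\beta^{-1}(g_2 y_x) = h_1^{-1}h_1\varphi(x) = \varphi(x),
\]
so $\beta \in U_\varphi$. The main technical obstacle is the bookkeeping required to list the forbidden points at each inductive step and to verify the Kat\v{e}tov condition transparently; strong freeness of $\Gamma$ together with the $\Gamma$-extension property are precisely what make the joint choice of $(y_x)$ possible.
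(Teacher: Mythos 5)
Your proposal is correct and follows essentially the same route as the paper: the faithfulness half is Lemma \ref{LemFaithful} applied verbatim (using that $\Sigma$ is finite, hence mixing, and inherits strong freeness), and your family $(y_x)_{x\in\dom\varphi}$ is exactly the paper's inductively built isometric copy $\psi\colon\dom\varphi\to A$ placed at distance $1$ from the relevant $\Gamma_2$-translates and from $\Gamma_2\alpha(F)$, followed by the same map $\beta_0$, the same appeal to Proposition \ref{Extension}, and the same element $g=h_1^{-1}g_2g_1$. The only blemish is that your condition (ii) as literally written conflicts with (i), since the displayed set contains the points $y_{x'}$ themselves (take $\sigma=1$ in $\Sigma y_{x'}$); only the nontrivial translates and the points of $\Sigma\alpha(F)\cup g_2^{-1}\Sigma\alpha(F)$ should be forced to distance $1$, and your description of the inductive step (value $d(x_k,x_j)$ on $y_{x_j}$, value $1$ on the forbidden points) makes the intended, correct meaning clear.
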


As before, the following corollary follows from Theorem \ref{ThmOneFiniteFactor}, Corollary \ref{ActionU}, as well as the examples of highly core-free subgroups given in \cite{zbMATH06503094}.
\begin{corollary}
For every infinite countable group $\Gamma_1$, every finite group $\Gamma_2$ with common subgroup $\Sigma<\Gamma_1,\Gamma_2$ such that $\Sigma$ is highly core-free in $\Gamma_1$ and $[\Gamma_2:\Sigma]\geq 2$, we have $\Gamma_1\underset{\Sigma}{*}\Gamma_2\in\mathcal{H}_{\UQone}$. In particular, the following facts hold.
\begin{itemize}
\item If $\Gamma_1$ is countably infinite and $\Gamma_2$ is a finite non-trivial then $\Gamma_1*\Gamma_2\in\mathcal{H}_{\UQone}$.
\item If $\Gamma_1$ is i.c.c., $\Gamma_2$ is finite and $[\Gamma_2:\Sigma]\geq 2$ then $\Gamma_1\underset{\Sigma}{*}\Gamma_2\in\mathcal{H}_{\UQone}$.
\end{itemize}
\end{corollary}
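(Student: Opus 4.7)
My plan is to mirror the proof of Theorem~\ref{ThmMain}, writing $O = U\cap V$ with $V := \{\alpha\in Z : \pi_\alpha\text{ is faithful}\}$ and $U := \{\alpha\in Z : \pi_\alpha\text{ is homogeneous}\}$, and showing each is a dense $G_\delta$ in $Z$ before invoking the Baire category theorem. For $V$, the hypotheses of Lemma~\ref{LemFaithful} hold with no modification: strong freeness of $\Gamma\actson\U_S$ implies freeness, and since $\Sigma\leq\Gamma_2$ is finite, the restricted action $\Sigma\actson\U_S$ is strongly free and automatically mixing. So $V$ is already dense $G_\delta$ with no additional work.

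The core of the proof is an analog of Lemma~\ref{LemHomogeneous} for $U$. Fix $\varphi\in P_f(\U_S)$, $\alpha\in Z$ and $F\Subset\U_S$; I want to produce $\beta\in Z$ with $\beta_{\restriction F}=\alpha_{\restriction F}$ together with $g\in\Gamma$ such that $\pi_\beta(g)_{\restriction\dom\varphi}=\varphi$. Exactly as in the original argument, I use the high core-freeness of $\Sigma$ in $\Gamma_1$ to pick $g_1,h_1\in\Gamma_1$ so that $\Sigma g_1\dom\varphi$ and $\Sigma h_1\rng\varphi$ are at $\Sigma$-equivariant distance $1$ from $\Sigma F$ and from each other; and since $[\Gamma_2:\Sigma]\geq 2$ I pick any $g_2\in\Gamma_2\setminus\Sigma$. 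The key novelty, replacing the use of high core-freeness of $\Sigma$ in $\Gamma_2$, is to exploit the extension property of $\Gamma\actson\U_S$ to manufacture by hand a family of auxiliary points $(y_x)_{x\in\dom\varphi}$ in $\U_S$ satisfying $d(y_x,y_{x'})=d(x,x')$ for all $x,x'\in\dom\varphi$, $d(y_x,h\,y_{x'})=1$ for all $h\in\Gamma_2\setminus\{1\}$ and all $x,x'$, and $d(y_x,v)=1$ for every $v\in\Gamma_2\bigl(\alpha(\Sigma F)\cup g_2^{-1}\alpha(\Sigma F)\bigr)$. These $y_x$ are produced inductively: at each step the function prescribed by the claimed distances defines a finitely supported one-point extension over the finite union of $\Gamma_2$-orbits $\Gamma_2(\alpha(\Sigma F)\cup g_2^{-1}\alpha(\Sigma F))\cup\Gamma_2 y_1\cup\ldots\cup\Gamma_2 y_{k-1}$, with $\{y_1,\ldots,y_{k-1}\}$ as a support, and is then realized in $\U_S$ by the extension property. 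Strong freeness gives $d(y_j,h y_j)=1$ for every $h\neq 1$, which together with the triangle inequality in $\dom\varphi$ makes the Kat\v{e}tov conditions trivial to verify.

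With these data in hand I define $\beta_0$ on $A := \Sigma F\sqcup\Sigma g_1\dom\varphi\sqcup\Sigma h_1\rng\varphi$ by $\beta_0|_{\Sigma F}=\alpha|_{\Sigma F}$, $\beta_0(\sigma g_1 x)=\sigma y_x$, and $\beta_0(\sigma h_1\varphi(x))=\sigma g_2 y_x$. A case-by-case check of distances, using the chosen properties of $g_1$, $h_1$ and of the $(y_x)$, confirms that $\beta_0$ is a $\Sigma$-equivariant partial isometry; Proposition~\ref{Extension} (which applies since $\Sigma\actson\U_S$ is strongly free, mixing, and inherits the extension property from $\Gamma$) then extends $\beta_0$ to some $\beta\in Z$. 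Setting $g := h_1^{-1}g_2 g_1$, one computes for every $x\in\dom\varphi$
\[
\pi_\beta(g)\,x = h_1^{-1}\beta^{-1}g_2\beta g_1 x = h_1^{-1}\beta^{-1}(g_2 y_x) = h_1^{-1}(h_1\varphi(x)) = \varphi(x),
\]
which is what is required. The main obstacle is really the inductive construction of the $(y_x)$: one must simultaneously prescribe their internal metric, their separation from every $\Gamma_2$-translate of one another, and their distance to a prescribed finite set, all while producing a genuinely finitely supported one-point extension. The combination of strong freeness of $\Gamma\actson\U_S$ (which disposes of all $\Gamma_2$-translate separations for free) together with the finiteness of $\Gamma_2$ (which keeps the $\Gamma_2$-orbits finite and allows direct use of the extension property) is exactly what makes this feasible.
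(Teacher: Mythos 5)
Your proposal is correct and is essentially the paper's own route: the corollary is deduced there from Theorem \ref{ThmOneFiniteFactor}, whose proof (Lemma \ref{LemHomogeneousOneFiniteFactor} combined with Lemma \ref{LemFaithful} and Baire) is exactly what you reconstruct --- your auxiliary family $(y_x)_{x\in\dom\varphi}$, built inductively from the extension property and strong freeness of the finite factor, is the paper's isometry $\psi\colon\dom\varphi\to A$, and your $\beta_0$ and $g=h_1^{-1}g_2g_1$ coincide with the paper's $\gamma_0$ and $g$. The only point to make explicit is the starting action: one should first invoke Corollary \ref{ActionU} for $\Gamma=\Gamma_1\underset{\Sigma}{*}\Gamma_2$ (infinite since $\Gamma_1$ is) to obtain a strongly free, mixing action with the extension property for which $\Sigma$, being highly core-free \emph{in} $\Gamma_1$, is highly core-free \emph{for} $\Gamma_1\curvearrowright\U_S$ --- this metric version is what your choice of $g_1,h_1$ actually uses --- and the two bullet items then follow from the standard examples of highly core-free subgroups (the trivial subgroup of an infinite group, a finite subgroup of an i.c.c.\ group).
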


In order to prove Theorem \ref{ThmOneFiniteFactor}, we prove the analogue of Lemma \ref{LemHomogeneous}. The theorem then follows as before from the combination of Lemmas \ref{LemHomogeneousOneFiniteFactor} and \ref{LemFaithful}.

\begin{lemma}\label{LemHomogeneousOneFiniteFactor}
Assume $\Gamma \curvearrowright\UQone$ is strongly free with the extension property, $\Sigma$ is highly core-free for $\Gamma_1\curvearrowright\UQone$, $\Gamma_2$ is finite and  $ [\Gamma_2:\Sigma ] \geq 2$. Then the set $U:=\{\alpha\in Z\colon \pi_{\alpha}\text{ is homogeneous}\}$ is dense $G_\delta$ in $Z$.
\end{lemma}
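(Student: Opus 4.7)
The proof follows the Baire-theoretic strategy of Lemma \ref{LemHomogeneous}. Writing $U=\bigcap_{\varphi\in P_f(\U_S)}U_\varphi$ with each $U_\varphi$ open, Baire's theorem reduces us to proving each $U_\varphi$ is dense. Fix $\varphi\in P_f(\U_S)$, $\alpha\in Z$, and $F\Subset\U_S$; we look for $\beta\in Z$ and $g\in\Gamma$ with $\beta_{|F}=\alpha_{|F}$ and $\pi_\beta(g)_{|\dom(\varphi)}=\varphi$. Exactly as in Lemma \ref{LemHomogeneous}, using high core-freeness of $\Sigma$ in $\Gamma_1\curvearrowright\U_S$, pick $g_1\in\Gamma_1$ placing $\dom(\varphi)$ $\Sigma$-generically at distance $1$ from $\Sigma F$; setting $F':=F\cup g_1\dom(\varphi)$, pick $h_1\in\Gamma_1$ similarly placing $\rng(\varphi)$ at distance $1$ from $\Sigma F'$. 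Since $[\Gamma_2:\Sigma]\ge 2$, choose any $g_2\in\Gamma_2\setminus\Sigma$ and set $g:=h_1^{-1}g_2g_1$; the requirement $\pi_\beta(g)(x)=\varphi(x)$ then rewrites as $\beta(h_1\varphi(x))=g_2\,\beta(g_1 x)$ for each $x\in\dom(\varphi)$.

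The key novelty, replacing the now-unavailable high core-freeness of $\Sigma$ in $\Gamma_2$, is that we no longer force $\beta(g_1 x)=\alpha(g_1 x)$ but treat each $\beta(g_1 x)=:y_x$ as a free parameter in $\U_S$. We aim to build a $\Sigma$-equivariant partial isometry
\[
\beta_0\colon\Sigma F\sqcup\Sigma g_1\dom(\varphi)\sqcup\Sigma h_1\rng(\varphi)\to\U_S,
\]
by setting $\beta_0=\alpha$ on $\Sigma F$, $\beta_0(\sigma g_1 x)=\sigma y_x$, and $\beta_0(\sigma h_1\varphi(x))=\sigma g_2 y_x$. A direct computation using the defining properties of $g_1$ and $h_1$ shows that $\beta_0$ is well-defined and isometric if and only if the points $y_x$ satisfy three families of conditions: (a) $d(y_x,y_{x'})=d(x,x')$ for all $x,x'\in\dom(\varphi)$; (b) $d(y_x,\tau y_{x'})=1$ for all $x,x'\in\dom(\varphi)$ and every $\tau$ in the finite set $T:=(\Sigma\cup g_2^{-1}\Sigma g_2\cup\Sigma g_2)\setminus\{1\}\subseteq\Gamma_2\setminus\{1\}$; and (c) $d(y_x,p)=1$ for every $p$ in the finite set $(\Sigma\cup g_2^{-1}\Sigma)\alpha(F)$. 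Finiteness of $T$ relies on $\Sigma\le\Gamma_2$ being finite, and the diagonal cases $x=x'$ of (b) are automatic from strong freeness of $\Gamma\curvearrowright\U_S$.

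We produce $y_{x_1},\dots,y_{x_k}$ successively using the extension property of $\Gamma\curvearrowright\U_S$: at step $n$, define a finitely supported one-point extension $f$ over the finite union of $\Gamma$-orbits $O_n:=\Gamma\alpha(F)\cup\bigcup_{j<n}\Gamma y_{x_j}$, with support $S_n:=(\Sigma\cup g_2^{-1}\Sigma)\alpha(F)\cup\bigcup_{j<n}(\{1\}\cup T)y_{x_j}$, prescribing on $S_n$ exactly the values demanded by (a)--(c) with $x=x_n$ and extending to $O_n$ by the Kat\v{e}tov formula. Verifying the Kat\v{e}tov inequalities on $S_n$ is routine: pairs of points both assigned the value $1$ are trivial (since distances lie in $[0,1]$), pairs of the form $(y_{x_j},y_{x_{j'}})$ reduce to the triangle inequality inside $(\dom(\varphi),d)$, and mixed pairs are handled using strong freeness together with the conditions (a)--(c) secured at previous steps. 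The extension property then yields $y_{x_n}\in\U_S$ realizing $f$. Once all $y_x$'s are chosen, $\beta_0$ is a $\Sigma$-equivariant partial isometry; since $\Sigma$ is finite, the $\Sigma$-action on $\U_S$ is automatically mixing, while strong freeness and the extension property pass to $\Sigma$ from $\Gamma$, so Proposition \ref{Extension} applied to $\pi_1=\pi_2=\Sigma\curvearrowright\U_S$ yields $\beta\in Z$ extending $\beta_0$. By construction $\beta_{|F}=\alpha_{|F}$ and $\pi_\beta(g)_{|\dom(\varphi)}=\varphi$, so $\beta\in U_\varphi$ lies arbitrarily close to $\alpha$. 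The main technical obstacle is the careful bookkeeping of conditions (a)--(c) and the inductive verification of the Kat\v{e}tov inequalities, but this is elementary once the setup is in place.
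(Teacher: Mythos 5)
Your proof is correct and follows essentially the same route as the paper: your points $y_x$ are exactly the paper's auxiliary isometric copy $\psi\colon\dom(\varphi)\to A$ (there built inductively, using finiteness of $\Gamma_2$, strong freeness and the extension property, with $d(ha,a')=1$ for all $h\in\Gamma_2\setminus\{1\}$ and $d(a,u)=1$ for $u\in\Gamma_2\alpha(F)$, of which your conditions (a)--(c) are a minimal sufficient subset), followed by the same application of Proposition \ref{Extension} to the $\Sigma$-equivariant map $\gamma_0$ and the same element $g=h_1^{-1}g_2g_1$.
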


\begin{proof}
We again have to prove that, given $\varphi\in P_f(\UQone)$, $\alpha\in Z$ and $F\Subset \UQone$, we can find $\gamma\in Z$ and $g\in\Gamma$ such that $\gamma_{\restriction F}=\alpha_{\restriction F}$ and $\pi_\gamma(g)_{\restriction \dom(\varphi)}=\varphi$. 


Since $\Sigma$ is highly core-free for $\Gamma_1\curvearrowright\UQone$ and $\Gamma_2$ is finite, we may find $g_1,h_1\in\Gamma_1$ such that
\begin{itemize}
\item $\forall u\in \Gamma_2 F \ \forall x \in \dom(\varphi) \quad d(g_1x,u)=1$.
\item $\forall \sigma \in \Sigma  \ \setminus \{1\} \ \forall x,y \in \dom(\varphi) \quad d(\sigma g_1x,g_1y)=1$.
\item  $\forall u \in \Gamma_2 F \ \sqcup \Gamma_2 g_1 \dom(\varphi) \ \forall  y \in \rng(\varphi) \quad d(h_1y,u)=1$
\item $\forall \sigma \in \Sigma \setminus \{1\} \ \forall x,y \in \dom(\varphi) \quad d(\sigma h_1 \varphi(x),h_1 \varphi(y))=1$
\end{itemize}

Using the fact that $\Gamma_2$ is finite, acts strongly freely and $\U_S$ has the extension property, it is straightforward to build inductively a finite set $A$ and an isometry $\psi \colon \dom(\varphi) \to A$ such that
\begin{itemize}
\item $\forall h\in \Gamma_2\setminus \{1\} \ \forall a,a' \in A \ d(ha,a')=1$
\item $\forall a \in A \ \forall u\in \Gamma_2 \alpha(F) \quad d(u,a)=1$
\end{itemize}

Take $g_2\in \Gamma_2\setminus\Sigma$. Then the sets $\alpha(\Sigma F)$, $\Sigma A$, and $\Sigma g_2 A$ are pairwise disjoint. Define 
$$B=\Sigma F\sqcup \Sigma g_1 \dom(\varphi)\sqcup \Sigma h_1\rng(\varphi) \text{ and }C=\alpha(\Sigma F)\sqcup \Sigma A \sqcup \Sigma g_2A.$$ 
Note that $\Sigma B= B$, $\Sigma C=C$ and the bijection $\gamma_0\colon B\rightarrow C$ defined by $\gamma_0(u)=\alpha(u)$ for $u\in\Sigma F$, $\gamma_0(\sigma g_1 x)=\sigma \psi(x)$ and $\gamma_0(\sigma h_1 \varphi(x))=\sigma g_2 \psi(x)$, for all $\sigma\in\Sigma$ and for all $x \in \dom(\varphi)$ is, by construction, an isometry such that 
$\gamma_0\sigma=\sigma\gamma_0$ for all $\sigma\in\Sigma$. Since $\Sigma$ is finite, $\Sigma\curvearrowright\mathbb{U}_S$ is mixing so we may apply Proposition \ref{Extension} to get an extension $\gamma\in Z$ of $\gamma_0$. Note that $\gamma_{\restriction F}=\alpha_{\restriction F}$. Moreover, with $g=h_1^{-1}g_2g_1\in \Gamma$ and $x \in \dom(\varphi)$ we have 
$$\pi_\gamma(g)(x)=h_1^{-1}\gamma^{-1}g_2\gamma g_1x=h_1^{-1}\gamma^{-1}g_2\psi(x)=h_1^{-1}\gamma^{-1} \gamma h_1 \varphi(x)=\varphi(x) .$$ 
\end{proof}

\section{Actions of HNN extensions on bounded Urysohn spaces}

Let $\Sigma<H$ be a finite subgroup of a countable group $H$ and $\theta\colon \Sigma\rightarrow H$ be an injective group homomorphism. Define $\Gamma={\rm HNN}(H,\Sigma,\theta)$ the HNN-extension and let $t\in\Gamma$ be the ``stable letter'' i.e. $\Gamma$ is the universal group generated by $H$ and $t$ with the relations $t\sigma t^{-1}=\theta(\sigma)$ for all $\sigma\in\Sigma$. For $\epsilon\in\{-1,1\}$, we write
$$\Sigma_\epsilon:=\left\{\begin{array}{lcl}\Sigma&\text{if}&\epsilon=1,\\ \theta(\Sigma)&\text{if}&\epsilon=-1.\end{array}\right.$$

 Suppose that we have a faithful action $\Gamma\curvearrowright\UQone$ and view $\Gamma<\autUQone$. Define the closed (hence Polish space) subset $Z=\{\alpha\in\autUQone\colon \theta(\sigma)=\alpha\sigma\alpha^{-1}\text{ for all }\sigma\in\Sigma\}\subseteq\autUQone$ and note that it is non-empty (since $t\in Z$). By the universal property of $\Gamma$, for each $\alpha\in Z$ there exists a unique group homomorphism $\pi_\alpha\colon \Gamma\rightarrow\autUQone$ such that
$${\pi_\alpha}_{\restriction H}=\id_H\text{ and }\pi_\alpha(t)=\alpha.$$

In this section we prove the following result.

\begin{theorem}\label{ThmHNN}
Assume that $\Gamma\curvearrowright\UQone$ is free with the extension property,  $\Sigma_\epsilon\curvearrowright\UQone$ is strongly free, mixing and $\Sigma_\epsilon$ is highly core-free for $H\curvearrowright\UQone$ for all $\epsilon\in\{-1,1\}$. 
Then the set
$$O=\{\alpha\in\autUQone\colon \pi_\alpha\text{ is faithful and homogeneous}\}$$
is a dense $G_\delta$ in $Z$.
\end{theorem}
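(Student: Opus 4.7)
The plan is to follow the blueprint of Theorem~\ref{ThmMain}: decompose $O = U \cap V$, where
$$U = \bigcap_{\varphi \in P_f(\UQone)} U_\varphi, \qquad V = \bigcap_{g \in \Gamma \setminus \{1\}} V_g,$$
with $U_\varphi = \{\alpha \in Z \colon \exists g \in \Gamma,\ \pi_\alpha(g)\vert_{\dom\varphi} = \varphi\}$ and $V_g = \{\alpha \in Z \colon \pi_\alpha(g) \neq \id\}$. Each $U_\varphi$ and $V_g$ is open in the Polish space $Z$, so by the Baire category theorem it suffices to prove their density. The key extension step in both proofs will invoke Proposition~\ref{Extension} applied to the pair of $\Sigma$-actions $\pi_1, \pi_2 \colon \Sigma \to \Iso(\UQone)$ defined by $\pi_1(\sigma) = \sigma$ and $\pi_2(\sigma) = \theta(\sigma)$. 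Both satisfy the hypotheses of Proposition~\ref{Extension} (strong freeness, mixing, and the extension property, the latter inherited from $\Gamma \curvearrowright \UQone$), and the condition $\beta \in Z$ is precisely $(\pi_1, \pi_2)$-equivariance of $\beta$.

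For density of $V_g$, the case $g \in H \setminus \{1\}$ is handled by the freeness of $\Gamma \curvearrowright \UQone$. Otherwise, Britton's Lemma gives a reduced HNN normal form $g = h_n t^{\epsilon_n} h_{n-1} \cdots t^{\epsilon_1} h_0$ with $n \geq 1$. Given $\alpha \in Z$ and $F \Subset \UQone$, apply the extension property to the constant-$1$ one-point extension (which is finitely supported with empty support) over the finite union of $\Gamma$-orbits $\Gamma(F \cup \alpha(F))$ to produce $x \in \UQone$ at distance $1$ from that whole set. Following Lemma~\ref{LemFaithful}, define a partial bijection $\gamma_0$ on a $\pi_1(\Sigma)$-saturated finite set containing $\Sigma F$ together with all intermediate points of the computation of $gx$ in $\Gamma \cdot x$, by setting $\gamma_0\vert_{\Sigma F} = \alpha\vert_{\Sigma F}$ and $\gamma_0 = t$ on the remaining points (automatically $(\pi_1,\pi_2)$-equivariant there since $t \in Z$). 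Because every point of $\Gamma \cdot x$ is at distance $1$ from $\Gamma(F \cup \alpha(F))$, all cross-distances collapse to $1$ and $\gamma_0$ is a $(\pi_1, \pi_2)$-partial isometry. Proposition~\ref{Extension} then extends $\gamma_0$ to $\gamma \in Z$ with $\gamma\vert_F = \alpha\vert_F$, and $\pi_\gamma(g) x = g x \neq x$ by freeness.

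For density of $U_\varphi$, fix $\alpha \in Z$, $F \Subset \UQone$ and $\varphi \in P_f(\UQone)$; we seek $\beta \in Z$ with $\beta\vert_F = \alpha\vert_F$ and $g \in \Gamma$ with $\pi_\beta(g)\vert_{\dom\varphi} = \varphi$. The natural candidate is $g = h_2^{-1} t h_1$ for some $h_1, h_2 \in H$, since then $\pi_\beta(g) x = h_2^{-1} \beta(h_1 x)$, which equals $\varphi(x)$ exactly when $\beta(h_1 x) = h_2 \varphi(x)$. Use high core-freeness of $\Sigma_1 = \Sigma$ for $H \curvearrowright \UQone$ to choose $h_1 \in H$ such that $\Sigma h_1 \dom\varphi$ is at distance $1$ from $\Sigma F$ and the $\Sigma$-action on $\Sigma h_1 \dom\varphi$ is free; symmetrically, use high core-freeness of $\Sigma_{-1} = \theta(\Sigma)$ to choose $h_2 \in H$ such that $\theta(\Sigma) h_2 \rng\varphi$ is at distance $1$ from $\alpha(\Sigma F) = \theta(\Sigma) \alpha(F)$ and the $\theta(\Sigma)$-action there is free. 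Define
$$\beta_0 \colon \Sigma F \sqcup \Sigma h_1 \dom\varphi \;\longrightarrow\; \alpha(\Sigma F) \sqcup \theta(\Sigma) h_2 \rng\varphi$$
by $\beta_0\vert_{\Sigma F} = \alpha\vert_{\Sigma F}$ and $\beta_0(\sigma h_1 x) = \theta(\sigma) h_2 \varphi(x)$. The core-freeness selections ensure that $\beta_0$ is a well-defined $(\pi_1,\pi_2)$-equivariant partial isometry, and Proposition~\ref{Extension} extends it to the required $\beta \in Z$.

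The main obstacle is the bookkeeping in the homogeneity step: unlike in Lemma~\ref{LemHomogeneous}, where $\alpha$ conjugates $\Sigma$ to itself, here $\beta$ must twist the $\Sigma$-action on one side into the $\theta(\Sigma)$-action on the other, so two genuinely distinct highly core-free subgroups of $H$ govern the two sides of $\beta$. The choices of $h_1$ and $h_2$ must therefore be coordinated so that no accidental coincidences arise on the disjoint union of orbits, and one must verify that every cross-distance between the blocks collapses to $1$, so that $\beta_0$ is genuinely well-defined and isometric.
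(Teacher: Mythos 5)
Your proposal is correct and follows essentially the same route as the paper: the same Baire-category decomposition into the open sets $U_\varphi$ and $V_g$, the same witness $g=h_2^{-1}th_1$ produced via high core-freeness of $\Sigma$ and $\theta(\Sigma)$, the same distinguished point $x$ at distance $1$ from $\Gamma(F\cup\alpha(F))$ with $\gamma_0=\alpha$ on $\Sigma F$ and $\gamma_0=t$ on the orbit part for faithfulness, and the same use of Proposition \ref{Extension} (the intertwining condition $\beta\sigma=\theta(\sigma)\beta$ being exactly membership in $Z$). The only deviation is cosmetic: in the homogeneity step your $\beta_0$ omits the paper's third block $\Sigma\alpha^{-1}(h_2\rng\varphi)\to\theta(\Sigma)\alpha(h_1\dom\varphi)$, which is harmless since only the value $\beta(h_1x)=h_2\varphi(x)$ enters the computation of $\pi_\beta(g)x$, and the cross-distance verifications you flag do go through with your correspondingly weaker requirement on $h_2$.
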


\begin{corollary}\label{cor: examples HNN}
For any subgroup $\Sigma$ of a countable group $H$ and any injective group homomorphism $\theta\,:\,\Sigma\rightarrow H$ such that $\Sigma_\epsilon<H$ is highly core-free for all $\epsilon\in\{-1,1\}$, we have $\text{HNN}(H,\Sigma,\theta)\in\mathcal{H}_{\UQone}$.
\end{corollary}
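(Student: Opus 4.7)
The plan is to mimic the proof of Theorem \ref{ThmMain}: write $O = U \cap V$ with $U = \{\alpha \in Z : \pi_\alpha \text{ is homogeneous}\}$ and $V = \{\alpha \in Z : \pi_\alpha \text{ is faithful}\}$, and establish that each is a dense $G_\delta$ subset of the Polish space $Z$. Then the Baire category theorem yields that $O$ is itself dense $G_\delta$, in particular nonempty. As in Lemmas \ref{LemHomogeneous} and \ref{LemFaithful}, we decompose $U = \bigcap_\varphi U_\varphi$ and $V = \bigcap_{g \ne 1} V_g$ as intersections of open sets indexed by $\varphi \in P_f(\mathbb{U}_S)$ and $g \in \Gamma \setminus \{1\}$, so it suffices to prove density of each $U_\varphi$ and $V_g$. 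The main engine throughout is Proposition \ref{Extension} applied to the two actions $\pi_1, \pi_2$ of $\Sigma$ on $\mathbb{U}_S$ given respectively by inclusion $\Sigma \hookrightarrow \Gamma \leq \Iso(\mathbb{U}_S)$ and by $\theta : \Sigma \to \theta(\Sigma) \leq \Iso(\mathbb{U}_S)$: both are strongly free and mixing by hypothesis, and inherit the extension property from $\Gamma \curvearrowright \mathbb{U}_S$. Observe that a $(\pi_1,\pi_2)$-isometry of $\mathbb{U}_S$ is exactly an element of $Z$.

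For density of $U_\varphi$, fix $\alpha \in Z$ and $F \Subset \mathbb{U}_S$; we will produce $\beta \in Z$ agreeing with $\alpha$ on $F$ and an element $g = h_2 t h_1 \in \Gamma$ (with $h_1, h_2 \in H$) such that $\pi_\beta(g)|_{\dom\varphi} = \varphi$. Since $\pi_\beta(g) = h_2 \beta h_1$, this amounts to requiring $\beta(h_1 x) = h_2^{-1} \varphi(x)$ for every $x \in \dom\varphi$. Using high core-freeness of $\Sigma$ for $H \curvearrowright \mathbb{U}_S$, choose $h_1 \in H$ so that the set $\Sigma h_1 \dom\varphi$ is in the correct ``general position'' relative to $\Sigma F$; using high core-freeness of $\theta(\Sigma)$ for $H \curvearrowright \mathbb{U}_S$, choose $h_2 \in H$ analogously for $\theta(\Sigma) h_2^{-1} \rng\varphi$ relative to $\theta(\Sigma) \alpha(F)$. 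The prescription $\sigma h_1 x \mapsto \theta(\sigma) h_2^{-1} \varphi(x)$, together with $\alpha$ on $\Sigma F$, then defines a $(\pi_1,\pi_2)$-partial isometry, which by Proposition \ref{Extension} extends to the desired $\beta \in Z$.

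For density of $V_g$, put $g$ in reduced Britton normal form $g = h_0 t^{\epsilon_1} h_1 \cdots t^{\epsilon_n} h_n$. If $n = 0$ then $g \in H \setminus \{1\}$ and $V_g = Z$ by faithfulness of the original $\Gamma$-action. Otherwise, given $\alpha \in Z$ and $F \Subset \mathbb{U}_S$, use the extension property of $\Gamma \curvearrowright \mathbb{U}_S$ to find $x \in \mathbb{U}_S$ at distance $1$ from a sufficiently large $\Gamma$-translate of $F \cup \alpha(F)$. Set $x_j = h_j t^{\epsilon_{j+1}} \cdots t^{\epsilon_n} h_n x$ and $y_j = t^{\epsilon_j} x_j$ for $j = 1, \ldots, n$, and let $(s_j, t_j) = (x_j, y_j)$ if $\epsilon_j = 1$ and $(s_j, t_j) = (y_j, x_j)$ if $\epsilon_j = -1$, so that imposing $\gamma(s_j) = t_j$ is equivalent to $\gamma^{\epsilon_j}(x_j) = y_j$; together, these relations force $\pi_\gamma(g) x = gx \ne x$ (by freeness of the $\Gamma$-action). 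The crucial combinatorial point, which is essentially Britton's lemma, is that reducedness (namely $h_j \notin \Sigma_{\epsilon_j}$ whenever $\epsilon_j$ and $\epsilon_{j+1}$ have opposite signs) combined with freeness of the $\Gamma$-action guarantees that the $\Sigma$-orbits $\Sigma s_j$ are pairwise disjoint and disjoint from $\Sigma F$, and similarly that the $\theta(\Sigma)$-orbits $\theta(\Sigma) t_j$ are pairwise disjoint and disjoint from $\theta(\Sigma) \alpha(F)$. This compatibility produces a $(\pi_1,\pi_2)$-partial isometry $\gamma_0$ extending $\alpha$ on $\Sigma F$ and encoding the required relations, to which Proposition \ref{Extension} applies to yield $\gamma \in Z \cap V_g$.

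The main obstacle is the combinatorial check in the faithfulness step: the mutual consistency of the relations $\gamma^{\epsilon_j}(x_j) = y_j$ must be verified by a case-by-case analysis on consecutive indices, where each of the four sign patterns $(\epsilon_j, \epsilon_{j+1}) \in \{\pm 1\}^2$ is ruled out using either $t \notin H$ or the reducedness condition $h_j \notin \Sigma_{\epsilon_j}$. Once this is settled, the rest of the argument is a direct transposition of the amalgamated-free-product case, the $\theta$-twist in the definition of $Z$ playing the role that conjugation by $\alpha$ on the second factor $\Gamma_2$ played in Theorem \ref{ThmMain}.
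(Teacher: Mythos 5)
There is a genuine gap: your argument proves (a rewording of) Theorem \ref{ThmHNN}, but not the corollary, because you never produce the action of $\Gamma={\rm HNN}(H,\Sigma,\theta)$ on $\UQone$ to which that theorem (or your re-proof of it) applies. The corollary's hypothesis is purely group-theoretic --- $\Sigma_\epsilon$ is highly core-free \emph{in} $H$ in the sense of Definition \ref{def: HCF intro} --- whereas you write that the two $\Sigma$-actions ``are strongly free and mixing by hypothesis, and inherit the extension property from $\Gamma\curvearrowright\U_S$''. No such action is given by the hypothesis; you are silently importing the dynamical assumptions of Theorem \ref{ThmHNN} (free $\Gamma$-action with the extension property, strongly free mixing $\Sigma_\epsilon$-actions, and $\Sigma_\epsilon$ highly core-free \emph{for} $H\curvearrowright\UQone$). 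The missing step is exactly Corollary \ref{ActionU}: since $\Gamma$ is countably infinite, it admits a strongly free, mixing action on $\U_S$ with the extension property, built by the tower construction of Proposition \ref{RelativeInducedAction} starting from the left translation action with the discrete metric, and this action has the crucial feature that the group-theoretic high core-freeness of $\Sigma_\epsilon\leq H$ upgrades to high core-freeness of $\Sigma_\epsilon$ for $H\curvearrowright\U_S$. Restricting it to $\Sigma_\epsilon$ gives strong freeness and mixing, so all hypotheses of Theorem \ref{ThmHNN} hold; one should also note that $Z$ is nonempty (it contains the image of the stable letter $t$), so the dense $G_\delta$ set $O$ is nonempty and any $\alpha\in O$ exhibits $\Gamma\in\HUS$. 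Without this bridge from the abstract hypothesis to an action with the required properties, the Baire-category machinery has nothing to run on.

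A secondary remark: once the hypotheses are in place, there is no need to redo the two density lemmas --- Theorem \ref{ThmHNN} (via Lemmas \ref{LemHNNHomogeneous} and \ref{LemHNNfaithful}) can simply be cited, and indeed that is how the paper intends the corollary to be read. Your sketch of the faithfulness lemma is also looser than the paper's: the mutual consistency of the relations $\gamma^{\epsilon_j}(x_j)=y_j$ is handled there not by a case analysis on sign patterns but by choosing $x$ at distance $1$ from all of $\Gamma F'$ (extension property) and using freeness of the $\Gamma$-action together with Britton's lemma to see that the sets $H_l x$ are pairwise disjoint and $\Sigma$-invariant, so that $\gamma_0$ is a well-defined equivariant isometry; if you do want a self-contained proof you would need to write this out rather than assert it.
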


As is by now customary, we separate the proof in two lemmas.

\begin{lemma}\label{LemHNNHomogeneous}
Assume that for $\epsilon \in \{-1,1\}$ the action $\Sigma_\epsilon\curvearrowright\UQone$ is strongly free, mixing with the extension property and $\Sigma_\epsilon$ is highly core-free for $H\curvearrowright\UQone$. Then the set $U=\{\alpha\in Z \colon \pi_\alpha\text{ is homogeneous}\}$ is a dense $G_\delta$ in $Z$.
\end{lemma}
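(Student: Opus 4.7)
The plan is to mirror the strategy of Lemma \ref{LemHomogeneous}, writing
\[
U=\bigcap_{\varphi\in P_f(\U_S)} U_\varphi,\qquad U_\varphi:=\{\alpha\in Z\colon \exists g\in\Gamma,\ \pi_\alpha(g)_{\restriction\dom\varphi}=\varphi\}.
\]
Each $U_\varphi$ is open in $Z$, so $U$ is $G_\delta$; since $Z$ is Polish, the Baire category theorem reduces matters to showing that each $U_\varphi$ is dense. So fix $\varphi\in P_f(\U_S)$, $\alpha\in Z$, and $F\Subset\U_S$; the goal is to produce $\beta\in Z$ with $\beta_{\restriction F}=\alpha_{\restriction F}$ together with some $g\in\Gamma$ with $\pi_\beta(g)_{\restriction\dom\varphi}=\varphi$.

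Where the amalgamated case uses a word of the form $h_1^{-1}g_2g_1$, here the natural choice is $g:=h_1^{-1}t h_2$ with $h_1,h_2\in H$ to be chosen, since $\pi_\beta(t)=\beta$ and $\pi_\beta(h)=h$ for $h\in H$. Then $\pi_\beta(g)(x)=h_1^{-1}\beta(h_2x)$, so I must construct $\beta\in Z$ satisfying $\beta(h_2x)=h_1\varphi(x)$ for every $x\in\dom\varphi$. Since $\Sigma$ is highly core-free for $H\curvearrowright\U_S$, pick $h_2\in H$ such that
\[
d(h_2x,u)=1=d(\sigma h_2x,h_2y)\qquad\forall\,x,y\in\dom\varphi,\ u\in\Sigma F,\ \sigma\in\Sigma\setminus\{1\}.
\]
Using $\alpha\in Z$ to get $\alpha(\Sigma F)=\theta(\Sigma)\alpha(F)$, and using that $\theta(\Sigma)$ is highly core-free for $H\curvearrowright\U_S$, pick $h_1\in H$ such that
\[
d(h_1y,u)=1=d(\tau h_1y,h_1y')\qquad\forall\,y,y'\in\rng\varphi,\ u\in\theta(\Sigma)\alpha(F),\ \tau\in\theta(\Sigma)\setminus\{1\}.
\]

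Set $A:=\Sigma F\sqcup\Sigma h_2\dom\varphi$ and $B:=\theta(\Sigma)\alpha(F)\sqcup\theta(\Sigma)h_1\rng\varphi$, and define a bijection $\beta_0\colon A\to B$ by $\beta_0(\sigma u)=\theta(\sigma)\alpha(u)$ for $\sigma\in\Sigma$, $u\in F$ and $\beta_0(\sigma h_2x)=\theta(\sigma)h_1\varphi(x)$ for $\sigma\in\Sigma$, $x\in\dom\varphi$. Well-definedness of the first piece follows from $\alpha\in Z$ and strong freeness of $\Sigma$, and the disjointness of the two pieces in $A$ (resp. $B$) follows from the choice of $h_2$ (resp. $h_1$). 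A straightforward case analysis shows that $\beta_0$ is an isometry intertwining the two $\Sigma$-actions $\pi_1\colon \Sigma\hookrightarrow\Iso(\U_S)$ and $\pi_2:=\theta\colon \Sigma\to\theta(\Sigma)\hookrightarrow\Iso(\U_S)$: within each piece of $A$ one uses the two defining inequalities, and the cross distances are all equal to $1$ since $d(\sigma'^{-1}\sigma u,h_2x)=1$ in $A$ and similarly in $B$. Both $\pi_1$ and $\pi_2$ are strongly free and mixing by hypothesis, and both inherit the extension property from the ambient $\Gamma\curvearrowright\U_S$, so Proposition \ref{Extension} produces a $(\pi_1,\pi_2)$-equivariant isometry $\beta$ of $\U_S$ extending $\beta_0$. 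The equivariance $\beta\sigma=\theta(\sigma)\beta$ is precisely $\beta\in Z$; by construction $\beta_{\restriction F}=\alpha_{\restriction F}$ and
\[
\pi_\beta(g)(x)=h_1^{-1}\beta(h_2x)=h_1^{-1}\cdot h_1\varphi(x)=\varphi(x)\qquad(x\in\dom\varphi),
\]
which shows $\beta\in U_\varphi$ and finishes the density argument.

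The only technical step, exactly as in the amalgamated case, is the bookkeeping: one must choose $h_2$ and $h_1$ in the right order, so that the separation data for $h_1$ already takes $\alpha(F)$ (not just $F$) into account, and then verify carefully that $\beta_0$ is a genuine partial isometry before invoking Proposition \ref{Extension}. Unlike in the analogue of Lemma \ref{LemFaithful}, we do not need to worry about $g=h_1^{-1}th_2$ being Britton-reduced, since here we are only asking $\pi_\beta(g)$ to extend $\varphi$, not to act nontrivially on a prescribed point.
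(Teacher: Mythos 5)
Your proof is correct and takes essentially the same route as the paper: Baire category reduces the statement to density of each $U_\varphi$, and one then uses high core-freeness of $\Sigma$ and of $\theta(\Sigma)$ for $H\curvearrowright\U_S$ to build a $\Sigma$-to-$\theta(\Sigma)$ equivariant finite-type partial isometry extending $\alpha_{\restriction F}$, extends it via Proposition \ref{Extension} to get $\beta\in Z$, and checks $\pi_\beta(h_1^{-1}th_2)$ extends $\varphi$. The only (harmless) deviation is that your $\beta_0$ has two pieces where the paper's $\gamma_0$ has a third, redundant one ($\Sigma\alpha^{-1}(h_2\rng\varphi)\to\theta(\Sigma)\alpha(h_1\dom\varphi)$), which is not needed for the final computation.
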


\begin{proof}
We apply the Baire category theorem as before: given $\varphi\in P_f(\UQone)$, $\alpha\in Z$ and $F\Subset \UQone$, we show that there exists $\gamma\in Z$ and $g\in\Gamma$ such that $\gamma_{\restriction F}=\alpha_{\restriction F}$ and $\pi_\gamma(g)_{\restriction \dom(\varphi)}=\varphi$. 

Since $\Sigma$ and $\theta(\Sigma)$ are highly core-free for $H\curvearrowright\UQone$, we produce $h_1,h_2\in H$ such that 
\begin{itemize}
\item $\forall u \in \Sigma F \ \forall x,x' \in \dom(\varphi) \  \forall \sigma \in \Sigma \setminus \{1\} \quad d(h_1x, u)=1=d(\sigma h_1x, h_1x') $.
\item $\forall u\in\alpha(\Sigma F\sqcup \Sigma h_1\dom(\varphi)) \ \forall y \in \rng(\varphi) \ d(h_2y, u)=1$.
\item $\forall y,y' \in \rng(\varphi) \ \forall \sigma \in \Sigma \setminus \{1\} \ d(\theta(\sigma) h_2y, h_2y')=1$.
\end{itemize}

Define $Y=\Sigma h_1 \dom(\varphi)\sqcup\Sigma\alpha^{-1}(h_2\rng(\varphi))\sqcup \Sigma F$ and observe that 
 $$\alpha(Y)=\theta(\Sigma) \alpha(h_1 \dom(\varphi))\sqcup\theta(\Sigma)(h_2\rng(\varphi))\sqcup(\theta(\Sigma)\alpha( F))$$
 Note that $\Sigma Y=Y$ and $\theta(\Sigma)\alpha(Y)=\alpha(Y)$. Define the bijection $\gamma_0\colon  Y\rightarrow\alpha(Y)$ by setting ${\gamma_0}_{\restriction \Sigma F}=\alpha_{\restriction \Sigma F}$ and 
$$\forall x \in \dom(\varphi) \ \forall \sigma \in \Sigma \quad \gamma_0(\sigma h_1x)=\theta(\sigma) h_2\varphi(x) \text{ and } \gamma_0(\sigma\alpha^{-1}(h_2\varphi(x)))=\theta(\sigma)\alpha(h_1x).$$
By the careful choices of elements $h_1,h_2$, the map $\gamma_0$ is an isometry such that $\gamma_0\sigma=\theta(\sigma)\gamma_0$ for all $\sigma\in\Sigma$. By Proposition \ref{Extension} there exists an extension $\gamma\in Z$ of $\gamma_0$. Note that $\gamma_{\restriction F}=\alpha_{\restriction F}$ moreover, with $g=h_2^{-1}th_1\in \Gamma$ we have, for all $x\in \dom(\varphi)$, 
$\pi_\gamma(g)x=h_2^{-1}\gamma h_1x=h_2^{-1}\gamma_0(h_1x)=h_2^{-1}h_2\varphi(x)=\varphi(x)$.
\end{proof}

\begin{lemma}\label{LemHNNfaithful}
Assume $\Gamma\curvearrowright\UQone$ is free, has the extension property and $\Sigma_\epsilon\curvearrowright\UQone$ is strongly free and mixing for all $\epsilon\in\{-1;1\}$. Then the set $V=\{\alpha \in Z \colon \pi_\alpha\text{ is faithful}\}$ is a dense $G_\delta$ in $Z$.
\end{lemma}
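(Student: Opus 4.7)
The plan is to adapt the Baire-category strategy of Lemma \ref{LemFaithful} to the HNN setting, where the new isometry $\gamma$ must itself impersonate the stable letter $t$ on a suitable orbit. Write $V = \bigcap_{g \in \Gamma \setminus \{1\}} V_g$ with $V_g = \{\alpha \in Z : \pi_\alpha(g) \neq \id\}$ open in $Z$; by the Baire category theorem it suffices to prove density of each $V_g$. If $g \in H \setminus \{1\}$ then $\pi_\alpha(g) = g \neq \id$ by faithfulness of $\Gamma \curvearrowright \U_S$, hence $V_g = Z$. Otherwise, put $g$ in Britton-reduced normal form $g = h_0 t^{\epsilon_1} h_1 \cdots t^{\epsilon_n} h_n$ with $n \geq 1$; reducedness (namely $h_k \notin \Sigma_{\epsilon_k}$ whenever $\epsilon_{k+1} = -\epsilon_k$) guarantees via Britton's lemma that $g \neq 1$ in $\Gamma$, so $gx \neq x$ for every $x \in \U_S$ by freeness.

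Fix $\alpha \in Z$ and $F \Subset \U_S$. Set $F' = F \cup \alpha(F)$ and, using the extension property of $\Gamma \curvearrowright \U_S$, pick $x \in \U_S$ satisfying $d(x, u) = 1$ for every $u \in \Gamma F'$. Define iteratively $y_{n+1} = x$ and $y_k = t^{\epsilon_k} h_k y_{k+1}$, so that every $y_k$ lies in $\Gamma x$. The aim is to produce $\gamma \in Z$ with $\gamma|_F = \alpha|_F$ and $\gamma^{\epsilon_k}(h_k y_{k+1}) = y_k$ for every $k$, because then a direct telescoping gives $\pi_\gamma(g) x = gx \neq x$. Accordingly, set $(a_k, b_k) = (h_k y_{k+1}, y_k)$ if $\epsilon_k = 1$ and $(a_k, b_k) = (y_k, h_k y_{k+1})$ if $\epsilon_k = -1$; the target condition becomes $\gamma(a_k) = b_k$, and in both sign cases a direct computation yields the uniform identity $b_k = t \, a_k$.

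Now define $A = \Sigma F \cup \bigcup_{k=1}^n \Sigma a_k$, $B = \theta(\Sigma) \alpha(F) \cup \bigcup_{k=1}^n t \Sigma a_k$, and $\gamma_0 \colon A \to B$ by $\gamma_0 = \alpha$ on $\Sigma F$ and $\gamma_0 = t$ on each $\Sigma a_k$. Because both $\alpha$ and $t$ lie in $Z$, each piece intertwines the inclusion action of $\Sigma$ with the action via $\theta$, so $\gamma_0$ is well-defined even when orbits $\Sigma a_k$ coincide (the second piece simply sends $p \mapsto tp$). Since $\alpha$ and $t$ are global isometries, the only non-trivial isometry check concerns pairs with one element in $\Sigma F$ and the other in some $\Sigma a_k$. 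But $\Sigma a_k, \, t\Sigma a_k \subseteq \Gamma x$ while $\Sigma F, \, \theta(\Sigma)\alpha(F) \subseteq \Gamma F'$, and $\Gamma F' \cap \Gamma x = \emptyset$ by the choice of $x$; hence our choice of $x$ forces distance $1$ on both sides. Thus $\gamma_0$ is a $(\pi_1, \pi_2)$-partial isometry, where $\pi_1$ is the inclusion $\Sigma \hookrightarrow \Iso(\U_S)$ and $\pi_2 = \theta$.

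Both $\pi_1$ and $\pi_2$ are strongly free and mixing by the hypothesis on $\Sigma_\epsilon$, and both inherit the extension property from $\Gamma \curvearrowright \U_S$, so Proposition \ref{Extension} extends $\gamma_0$ to a full isometry $\gamma \in Z$. By construction $\gamma|_F = \alpha|_F$ and $\gamma(a_k) = b_k$ for every $k$, whence $\pi_\gamma(g) x = gx \neq x$ and $\gamma \in V_g$. The main technical obstacle is arranging matters so that the partial isometry $\gamma_0$ admits a transparent description on both components of $A$: the key observation $b_k = t a_k$, valid in both sign cases by the defining relation $t \sigma t^{-1} = \theta(\sigma)$ encoded in $Z$, is what collapses the construction into ``$\alpha$ on $\Sigma F$ and $t$ everywhere else,'' which in turn trivializes the isometry verification through the disjointness of $\Gamma x$ and $\Gamma F'$.
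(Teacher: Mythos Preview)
Your proof is correct and follows essentially the same approach as the paper: reduce via Baire category to density of each $V_g$, pick a witness $x$ at maximal distance from $\Gamma F'$, and build a partial isometry equal to $\alpha$ on $\Sigma F$ and to the stable letter $t$ on the relevant $\Sigma$-orbits inside $\Gamma x$, then extend via Proposition~\ref{Extension}. Your observation $b_k = t a_k$ is exactly the paper's $H'_l = t H_l$, and your telescoping $\pi_\gamma(g)x = gx$ matches the paper's computation; if anything, your use of unions rather than disjoint unions for $A$ and $B$, together with the remark that $\gamma_0$ is globally $p\mapsto tp$ on the second piece, is slightly more careful about potential overlaps among the $\Sigma a_k$.
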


\begin{proof}
Once again we prove that $V_g=\{\alpha\in Z\colon \pi_\alpha(g)\neq\id\}$ is dense for all $g\in\Gamma\setminus \{1\}$, and we only need to prove it for $g\notin H$. 
Write $g=h_nt^{\epsilon_n}\dots t^{\epsilon_1}h_0$ a reduced expression for $g$, where $n\geq 1$, $h_k\in H$ and $\epsilon_k\in\{-1,1\}$. Fix $\alpha\in Z$ and $F\Subset \UQone$. 
For $l \in \{1,\ldots,n\}$ we defined subsets $H_l\subset\Gamma$:
$$H_1=\left\{\begin{array}{lcl}\Sigma h_0&\text{if}&\epsilon_1=1\\ \Sigma t^{-1} h_0&\text{if}&\epsilon_1=-1\end{array}\right.\text{ and, for }l\geq2,\,H_l=\left\{\begin{array}{lcl}\Sigma h_{l-1}t^{\epsilon_{l-1}}\dots t^{\epsilon_1}h_0&\text{if}&\epsilon_l=1\\ \Sigma t^{-1}h_{l-1}t^{\epsilon_{l-1}}\dots t^{\epsilon_1}h_0&\text{if}&\epsilon_l=-1\end{array}\right.$$
Observe that $\Sigma H_l=H_l$ for all $l$. Let $F':=F\cup\alpha(F)\Subset \UQone$ and, for $l \in \{1,\ldots,n\}$, $H'_l:=t H_l\subset\Gamma$. Using the extension property, we find $x\in \UQone$ such that $d(x, u)=1$ for all $u\in\Gamma F'$ and define
$$Y:=\Sigma F\sqcup(\sqcup_{l=1}^nH_lx)\quad\text{and}\quad Y':=\theta(\Sigma)\alpha(F)\sqcup(\sqcup_{l=1}^n H'_lx).$$
We may then define an isometry $\gamma_0\colon Y\rightarrow Y'$ by setting ${\gamma_0}_{\restriction \Sigma F}=\alpha_{\restriction \Sigma F}$ and, for all $l \in \{1,\ldots,n\}$, 
${\gamma_0}_{\restriction H_lx}=t_{\restriction H_lx}$. 

We have by construction $\Sigma Y=Y$, $\theta(\Sigma)Y'=Y'$ and $\gamma_0\sigma=\theta(\sigma)\gamma_0$ for all $\sigma\in\Sigma$. Thus there exists, by Proposition \ref{Extension}, an extension $\gamma\in Z$. Then $\gamma$ satisfies $\gamma_{\restriction F}=\alpha_{\restriction F}$ and $\pi_\gamma(g)x=h_n\gamma^{\epsilon_n}\dots\gamma^{\epsilon_1}h_0x=h_nt^{\epsilon_n}\dots t^{\epsilon_1}h_0x=gx\neq x$ since $g\neq 1$ and the $\Gamma$-action is free. It follows that $\gamma\in V_g$.
\end{proof}

\section{Actions of groups acting on trees on bounded Urysohn spaces}

We record here that, just as in \cite{zbMATH06503094}, the previous results apply to groups acting on trees. The reasoning is exactly the same as in \cite{zbMATH06503094}, but since the argument is short we kept it for the reader's convenience.

Let $\Gamma$ be a group acting without inversion on a non-trivial tree. By \cite{MR0476875}, the quotient graph $\mathcal{G}$ can be equipped with the structure of a graph of groups $(\mathcal{G},\{\Gamma_p\}_{p\in\VG},\{\Sigma_e\}_{e\in\EG})$ where each $\Sigma_e=\Sigma_{\overline{e}}$ is isomorphic to an edge stabilizer, each $\Gamma_p$ is isomorphic to a vertex stabilizer and such that $\Gamma$ is isomorphic to the fundamental group $\pi_1(\Gamma, \mathcal{G})$ of this graph of groups i.e., given a fixed maximal subtree $\mathcal{T}\subseteq\mathcal{G}$, the group $\Gamma$ is generated by the groups $\Gamma_p$ for $p\in\VG$ and the edges $e\in\EG$ with the relations
$$\overline{e}=e^{-1},\quad s_{e}(x)=er_{e}(x)e^{-1}\quad \forall x\in\Sigma_e\quad\text{and}\quad e=1 \quad\forall e\in {\rm E}(\mathcal{T}),$$
where $s_e\colon \Sigma_e\rightarrow \Gamma_{s(e)}$ and $r_e=s_{\overline{e}}\colon \Sigma_e\rightarrow\Gamma_{r(e)}$ are respectively the source and range group monomorphisms.

\begin{theorem}\label{Trees}
Assume $\Gamma_p$ is countably infinite for all $p\in\VG$, and $s_e(\Sigma_e)$ is highly core-free in $\Gamma_{s(e)}$ for all $e\in\EG$. Then $\Gamma\in\mathcal{H}_{\UQone}$.
\end{theorem}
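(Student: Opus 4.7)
The plan is to induct on the number of geometric edges of $\mathcal{G}$, invoking Corollaries \ref{cor: amalgamated examples} and \ref{cor: examples HNN} at each step. For the base case, suppose $\mathcal{G}$ has a single edge. If this edge joins two distinct vertices then $\Gamma \cong \Gamma_{s(e)} *_{\Sigma_e} \Gamma_{r(e)}$ where the amalgamating subgroup is highly core-free in both factors by hypothesis, and $\Gamma_{s(e)}, \Gamma_{r(e)}$ are countably infinite; so Corollary \ref{cor: amalgamated examples} applies. If instead the edge is a loop at a vertex $p$, then $\Gamma \cong \HNN(\Gamma_p, \Sigma_e, \theta_e)$ with $\theta_e = r_e \circ s_e^{-1}$, and the two copies of $\Sigma_e$ in $\Gamma_p$ are both highly core-free by hypothesis, so Corollary \ref{cor: examples HNN} gives the conclusion.

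For the inductive step, choose a geometric edge $\{e,\bar e\}$ and let $\mathcal{G}'$ be the graph of groups obtained by removing it (keeping all remaining vertex and edge data). Bass--Serre theory then presents $\Gamma$ in one of two ways: if $\{e,\bar e\}$ is a bridge in the underlying graph, write $\mathcal{G}' = \mathcal{G}_1 \sqcup \mathcal{G}_2$ and obtain $\Gamma \cong \pi_1(\mathcal{G}_1) \underset{\Sigma_e}{*} \pi_1(\mathcal{G}_2)$; otherwise $\mathcal{G}'$ is still connected and $\Gamma \cong \mathrm{HNN}(\pi_1(\mathcal{G}'), \Sigma_e, \theta_e)$. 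Each of the smaller graphs of groups still satisfies the hypotheses of the theorem and has strictly fewer edges, so the induction hypothesis gives that the smaller fundamental groups lie in $\mathcal{H}_{\UQone}$ (or are countably infinite when they reduce to a single vertex group). It then remains to invoke Corollary \ref{cor: amalgamated examples} or Corollary \ref{cor: examples HNN} respectively.

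The main obstacle is that these corollaries require $\Sigma_e$ (more precisely, its images under the source and range maps) to be highly core-free in the \emph{entire} intermediate fundamental group that appears in the inductive step, whereas the hypothesis only provides this property inside the original vertex group $\Gamma_{s(e)}$. I therefore need to establish a preservation lemma, stated informally as follows: if $H \le \Gamma_p$ is highly core-free and $\widetilde{\Lambda}$ is the fundamental group of a graph of groups having $\Gamma_p$ as one vertex group (all other vertex and edge groups being arbitrary), then $H$ remains highly core-free in $\widetilde{\Lambda}$. The proof should proceed via the Bass--Serre normal form: given a finite subset $F \subseteq \widetilde{\Lambda}$, each element of $F$ is represented by a reduced word whose $\Gamma_p$-syllables form a finite subset $F' \subseteq \Gamma_p$; applying the highly-core-free property of $H$ in $\Gamma_p$ to a suitable enlargement of $F'$ produces an element $\gamma \in \Gamma_p$ such that $H \times F \to \widetilde{\Lambda}$, $(\sigma, f) \mapsto \sigma\gamma f$, is injective with range disjoint from $F$, the uniqueness of normal forms guaranteeing that no accidental cancellations or coincidences can occur.

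This preservation statement, once in place, allows the inductive construction to go through, and the strategy is essentially the same as that already used in \cite{zbMATH06503094} to treat the analogous problem for the random graph, making the present adaptation essentially routine.
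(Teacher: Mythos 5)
Your overall route is the same as the paper's: remove one geometric edge, present $\Gamma$ either as an amalgam of the fundamental groups of the two components over $\Sigma_{e_0}$ or as an HNN extension of the fundamental group of the connected complement, and feed this into the amalgam/HNN results. One remark on efficiency: the induction is superfluous. Theorems \ref{ThmMain} and \ref{ThmHNN} (via Corollaries \ref{cor: amalgamated examples} and \ref{cor: examples HNN}) do not ask the factors to lie in $\HUS$, only that they be countably infinite and that the edge subgroup be highly core-free in them; since every sub-fundamental group contains an infinite vertex group, a single edge removal already finishes the proof, which is exactly what the paper does.

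The real crux is your ``preservation lemma'', and here you have correctly isolated the point the paper itself leans on (it asserts it outright, with the reasoning attributed to \cite{zbMATH06503094}). The statement is true, but your sketch via Bass--Serre normal forms is not the right mechanism: normal forms are only unique up to shuffling by edge-group elements, and neither the failure of injectivity (which amounts to $\gamma f f'^{-1}\gamma^{-1}\in H$ for distinct $f,f'\in F$) nor a relation $\sigma\gamma f=f'$ is controlled by the syllable set of the elements of $F$, because products such as $ff'^{-1}$ can collapse into $\Gamma_p$ after cancellation. No normal forms are needed, and in fact more is true: if $H$ is highly core-free in a subgroup $\Lambda$ of a countable group $G$, then $H$ is highly core-free in $G$. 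Indeed, given $F\Subset G$, apply the hypothesis inside $\Lambda$ to $D:=\{1\}\cup (FF^{-1}\cap\Lambda)$ to obtain $\gamma\in\Lambda$. If $f\neq f'$ in $F$ and $\gamma ff'^{-1}\gamma^{-1}=\tau\in H\subseteq\Lambda$, then $ff'^{-1}\in\gamma^{-1}\Lambda\gamma=\Lambda$, so $d:=ff'^{-1}\in D\setminus\{1\}$ and $\tau^{-1}\gamma d=\gamma$, i.e.\ $(\tau^{-1},d)$ and $(1,1)$ have the same image, contradicting injectivity of $(\sigma,x)\mapsto\sigma\gamma x$ on $H\times D$. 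Similarly, if $\sigma\gamma f=f'$ with $\sigma\in H$ and $f,f'\in F$, then $f'f^{-1}=\sigma\gamma\in\Lambda$, hence $f'f^{-1}\in D$, and since $1\in D$ this element lies in $H\gamma D\cap D$, contradicting the disjointness part of the definition. Applying this with $\Lambda=\Gamma_{s(e_0)}$ or $\Gamma_{r(e_0)}$ inside the relevant sub-fundamental group, your argument reduces exactly to the paper's; with the lemma proved this way (rather than by the syllable argument) the proposal is complete.
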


\begin{proof}

Let $e_0$ be one edge of $\mathcal{G}$ and $\mathcal{G}'$ be the graph obtained from $\mathcal{G}$ by removing the edges $e_0$ and $\overline{e_0}$.

\textbf{Case 1: $\mathcal{G}'$ is connected.} It follows from Bass-Serre theory that $\Gamma={\rm HNN}(H,\Sigma,\theta)$ where $H$ is the fundamental group of our graph of groups restricted to $\mathcal{G}'$, $\Sigma=r_{e_0}(\Sigma_{e_0})<H$ is a subgroup and $\theta\colon \Sigma\rightarrow H$ is given by $\theta=s_{e_0}\circ r_{e_0}^{-1}$. By hypothesis $H$ is countably infinite and, since $\Sigma<\Gamma_{r(e_0)}$ (resp. $\theta(\Sigma)<\Gamma_{s(e_0)}$) is a highly core-free subgroup, $\Sigma<H$ (resp. $\theta(\Sigma)<H$) is also a highly core-free subgroup. Thus we may apply Theorem \ref{ThmHNN} to conclude that $\Gamma\in\mathcal{H}_{\UQone}$.

 \textbf{Case 2: $\mathcal{G}'$ is not connected.} Let $\mathcal{G}_1$ and $\mathcal{G}_2$ be the two connected components of $\mathcal{G}'$ such that $s(e_0)\in{\rm V}(\mathcal{G}_1)$ and $r(e_0)\in{\rm V}(\mathcal{G}_2)$. Bass-Serre theory implies that $\Gamma=\Gamma_1*_{\Sigma{e_0}}\Gamma_2$, where $\Gamma_i$ is the fundamental group of our graph of groups restricted to $\mathcal{G}_i$, $i=1,2$, and $\Sigma_{e_0}$ is viewed as a highly core-free subgroup of $\Gamma_1$ via the map $s_{e_0}$ and as a highly core-free subgroup of $\Gamma_2$ via the map $r_{e_0}$ since $s_{e_0}(\Sigma_{e_0})$ is highly core-free in $\Gamma_{s(e_0)}$ and $r_{e_0}(\Sigma_{e_0})$ is highly core-free in $\Gamma_{r(e_0)}$ by hypothesis. Since $\Gamma_1$ and $\Gamma_2$ are countably infinite and $\Sigma_{e_0}$ is highly core-free, we may apply Theorem \ref{ThmMain} to conclude that $\Gamma\in\mathcal{H}_{\UQone}$.
\end{proof}

\section{The unbounded case}\label{sec: unbounded}

In this section we explain how to extend some of the above results to the case of the Urysohn space $\mathbb U_\Q$. While the methods would apply to some other unbounded Urysohn spaces, (for instance they work also for $\U_\N$), we chose to focus on the case of the rational Urysohn space in order to improve the clarity of the exposition.

\subsection{Strong disconnection}

\begin{definition}
Let $D$ be an unbounded subset of $[0,+\infty[$.
An isometric action of $\Gamma$ on an unbounded metric space $(X,d)$ \textbf{strongly $D$-disconnects finite sets} if 
\begin{center}
$\forall F\Subset X$, $\exists N\in\N$, $\forall K\in D \cap [N,+\infty[$, $\exists \gamma\in\Gamma$ satisfying $d(x,\gamma y)=K$ for every $x,y\in F$.
\end{center}
\end{definition}

This definition might be a bit hard to grasp at first, as it involves many quantifiers. It is meant to capture the idea that one can find $\gamma \in \Gamma$ so that $\gamma F$ and $F$ are ``independent enough'', and in the unbounded case a good notion of independence is that all elements of $F$ and $\gamma F$ are at the same distance, and arbitrarily far away. We also need some freedom for the choice of distance, and this is what the set $D$ provides us (we can pick any distance in $D$, provided it is large enough).


As in the bounded case, we need to produce sufficiently free actions of a group $G$ on the Urysohn space; and these actions are built by starting from the left action of $G$ on itself and then applying a Kat\v{e}tov-type tower construction. A new feature here is that we first need to produce a suitable metric on $G$ before starting the tower construction.

\begin{lemma}\label{lem:unbounded left invariant metric}
Every infinite countable group admits an unbounded left-invariant metric which surjects onto $\N$.
\end{lemma}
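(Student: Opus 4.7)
My approach is to construct the metric $d$ from a left-invariant integer-valued length function $\ell : G \to \mathbb{N}$, via $d(g,h) = \ell(g^{-1}h)$. I will build $\ell$ as a weighted word length for a suitable generating sequence. Fix an enumeration $G \setminus \{e\} = \{g_1, g_2, \ldots\}$. The plan is to build inductively a sequence $(s_k)_{k \geq 1}$ in $G$ together with positive integer weights $(w_k)_{k \geq 1}$, and then set
\[
\ell(g) = \inf\Bigl\{\sum_j w_{k_j} : g = s_{k_1}^{\epsilon_1} \cdots s_{k_m}^{\epsilon_m},\ \epsilon_j \in \{\pm 1\}\Bigr\},
\]
with $\ell(e) = 0$ from the empty product convention. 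Such an $\ell$ is automatically a left-invariant length function taking values in $\mathbb{N}$.

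The induction alternates between two goals. At the odd step $2n-1$, I force $\ell$ to attain the value $n$: at that moment only $s_1,\ldots,s_{2n-2}$ have been chosen, all with weights in $\{1,\ldots,n-1\}$, so the provisional ball $B_{n-1}$ of radius $n-1$ is finite. Since $G$ is infinite, I pick some $y_n \in G \setminus B_{n-1}$ and set $s_{2n-1} = y_n$ with weight $w_{2n-1} = n$. At the even step $2n$, I ensure that the eventual generating sequence covers all of $G$: if $g_n \notin \langle s_1,\ldots,s_{2n-1}\rangle$, I add it as $s_{2n}$ with weight $w_{2n}=n$; otherwise I skip this step.

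The verification proceeds as follows. First, every $g_n$ is eventually in the subgroup generated by the $s_k$, so $\ell$ is finite everywhere and $d$ is a genuine metric. Second, at most two generators carry each weight $n$, so for every radius $R$ only finitely many elements $s_k$ have $w_k \leq R$; hence balls of radius $R$ are finite, and since $G$ is infinite, $\ell$ is unbounded. Third, I claim $\ell(y_n)=n$ for all $n$, which gives surjectivity onto $\mathbb{N}$. The inequality $\ell(y_n)\leq n$ is clear from the one-term product $y_n = s_{2n-1}$.

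The reverse inequality is the heart of the argument and is the main obstacle: one must check that no later generator can be used to shorten $y_n$ below weight $n$. This is exactly why I assign weight $n$ (rather than $1$) to $s_{2n-1}$: any generator $s_k$ with $k \geq 2n-1$ then has $w_k \geq n$, so a single use already contributes $n$ to the total weight, while any expression of $y_n$ using only the earlier generators $s_1,\ldots,s_{2n-2}$ must have total weight $\geq n$ by the defining property $y_n \notin B_{n-1}$. Combining these two cases yields $\ell(y_n)\geq n$ in the final metric, completing the construction.
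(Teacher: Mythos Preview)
Your proof is correct. The paper's argument is shorter and proceeds by a case split: if $G$ is finitely generated, the word metric for any finite generating set works (geodesics in the Cayley graph make it surject onto $\N$); if $G$ is not finitely generated, one writes $G = \bigcup_n \Gamma_n$ as a strictly increasing union of subgroups with $\Gamma_0=\{e\}$ and sets $d(g,h) = \min\{n : g^{-1}h \in \Gamma_n\}$, which is an ultrametric hitting every $n$ because $\Gamma_n \subsetneq \Gamma_{n+1}$. Your weighted word-length construction is more uniform---it avoids the case split entirely---and the alternation between odd steps (forcing the value $n$ via $y_n \notin B_{n-1}$) and even steps (ensuring generation) is a clean way to handle both requirements at once. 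The key verification that later generators cannot shorten $\ell(y_n)$ below $n$, because every $s_k$ with $k \geq 2n-1$ already carries weight $\geq n$, is exactly right. The trade-off is length: the paper's proof is a few lines, whereas yours requires an inductive construction and a genuine argument; on the other hand, yours is self-contained and does not implicitly appeal to the existence of a strictly increasing chain of proper subgroups in the non-finitely-generated case.
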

\begin{proof}
If $\Gamma$ is finitely generated, let $S$ be a finite generating set, then the Cayley metric associated to $S$ is as wanted.

If $\Gamma$ is not finitely generated write $\Gamma=\bigcup_n\Gamma_n$ where $\Gamma_0=\{1\}$ and each $\Gamma_n$ is properly contained in $\Gamma_{n+1}$, then consider the left-invariant metric 
$d(g,h) = \min\{n\colon g^{-1}h \in \Gamma_n\}$. 

\end{proof}
\begin{definition}
Say that two metrics $d_1$ and $d_2$ on a set $X$ \textbf{coincide at scale} $K$ if we have 
$$\forall l \le K \quad \forall x,y\in X\quad d_1(x,y)=l \Leftrightarrow d_2(x,y)=l. $$
\end{definition}

\begin{lemma}
Suppose a countable group $\Gamma$ acts on a metric space $(X,d)$ with an unbounded orbit, and suppose that $d$ takes values in $\N$. Then for every $N\in\N$ there is a surjective function $f\colon\N\to\N$ so that if we let $\tilde d=f\circ d$, then $\tilde d$ is a metric  which coincides with $d$ at scale $N$ and the $\Gamma$-action on $(X,\tilde d)$ strongly $\N$-disconnects finite sets. 
\end{lemma}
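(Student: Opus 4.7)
The plan is to construct $f$ by collapsing large distances into discrete levels built around a rapidly growing subsequence of orbit distances, while leaving small distances untouched. First, I would fix $y_0 \in X$ with unbounded $\Gamma$-orbit and let $S := \{d(y_0, \gamma y_0) : \gamma \in \Gamma\}$, which is an unbounded subset of $\N$. Recursively choose a sequence $(t_j)_{j \ge 1}$ in $S$ with $t_1 > 2N$ and $t_{j+1} \ge 2 t_j$ for all $j \ge 1$; this is possible since $S$ is unbounded. Set $m_0 := N$ and $m_j := \lfloor (t_j + t_{j+1})/2 \rfloor$ for $j \ge 1$, and define $f \colon \N \to \N$ by $f(k) := k$ for $0 \le k \le N$ and $f(k) := N + j$ for $k \in (m_{j-1}, m_j]$, $j \ge 1$.

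With this definition, $f$ is surjective onto $\N$, non-decreasing, coincides with the identity on $\{0,\dots,N\}$, and sends every $k > N$ into $\{N+1, N+2, \dots\}$, which immediately gives that $\tilde d := f \circ d$ coincides with $d$ at scale $N$. To conclude that $\tilde d$ is a metric, I would establish subadditivity of $f$; since $d$ satisfies the triangle inequality and $f$ is non-decreasing, this is enough: $\tilde d(x,z) = f(d(x,z)) \le f(d(x,y)+d(y,z)) \le f(d(x,y)) + f(d(y,z))$. The key doubling estimate $m_{j+1} \ge 2 m_j$ follows from $t_{j+2} \ge 2 t_{j+1} \ge t_{j+1} + 2 t_j$. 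The verification of $f(a+b) \le f(a) + f(b)$ then splits into three cases: both $a, b \le N$ (using $t_1 > 2N$ to trap sums inside the first level $(N, m_1]$, where $f$ equals $N+1 \le a+b$), one of $a, b$ at most $N$ (using $m_{j+1} - m_j \ge N$ to bound the level increment by $1$), and both $a, b > N$ (using $2 m_j \le m_{j+1}$ to control the level of $a+b \le 2 m_{\max(j_a, j_b)}$).

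Strong disconnection then follows from a standard triangle-inequality argument. For a finite $F \Subset X$, set $D := \max_{x \in F} d(x, y_0)$; then for every $x, y \in F$ and $\gamma \in \Gamma$, we have $|d(x, \gamma y) - d(y_0, \gamma y_0)| \le 2D$. Whenever $t_j > 4D$, rapid growth gives $m_{j-1} = \lfloor (t_{j-1}+t_j)/2 \rfloor < t_j - 2D$ and $m_j \ge t_j + 2D$, so choosing $\gamma$ with $d(y_0, \gamma y_0) = t_j$ forces every $d(x, \gamma y)$ into $(m_{j-1}, m_j]$ and hence $\tilde d(x, \gamma y) = N + j$. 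Since $t_j \to \infty$, setting $N_0 := N + \min\{j : t_j > 4D\}$ realizes every level $K \ge N_0$, which is the required strong disconnection.

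The main obstacle is the subadditivity verification, which dictates both the growth condition $t_{j+1} \ge 2 t_j$ and the initial condition $t_1 > 2N$; without them, levels could be positioned in a way that breaks the triangle inequality. The midpoint choice $m_j = \lfloor (t_j+t_{j+1})/2 \rfloor$ is designed precisely to allow a $2D$-buffer on each side of $t_j$ within its level, so that the same construction yields both subadditivity and strong disconnection.
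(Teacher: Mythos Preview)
Your argument is correct in structure, though the constant $4D$ in the strong-disconnection step should be $8D$: from $t_{j-1}\le t_j/2$ you get $m_{j-1}\le 3t_j/4$, and $3t_j/4 < t_j - 2D$ requires $t_j > 8D$, not $4D$. This is harmless, since it only shifts the threshold $N_0$.

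Your route differs from the paper's in that you do everything in a single pass. The paper first applies one collapse $f_\varphi$ (with $\varphi$ built inductively from the orbit) to arrange that the orbit of $x_0$ realises \emph{every} value $n>N$, and then applies a second, fixed collapse with $\varphi(n)=2n$ to obtain strong disconnection. You instead pick a doubling subsequence $(t_j)$ of orbit distances from the outset and place the level boundaries at midpoints $m_j=\lfloor(t_j+t_{j+1})/2\rfloor$, so that the single function $f$ already satisfies both requirements. The paper's two-step approach is more modular (the second step uses a universal $\varphi$ independent of the group), and its subadditivity comes for free from the general fact that $f_\varphi$ is subadditive whenever $\varphi$ is non-decreasing. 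Your one-step approach is more direct but forces you to verify subadditivity by hand through the case split; the doubling condition $t_{j+1}\ge 2t_j$ is exactly what makes both that verification and the $2D$-buffer argument go through simultaneously.
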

\begin{proof}
Given a non-decreasing function $\varphi\colon\N\setminus\{0\}\to\N \setminus\{0\}$, we can associate to it a function $f_\varphi\colon\N\to\N$ by defining 
$$f_\varphi(m)= \min \{k \colon m\leq\sum_{i=1}^{k}\varphi(i) \} . $$
It can be checked that  $f_\varphi$ is the unique non-decreasing function $\N\to\N$ such that $f(0)=0$ and for every $n\in\N \setminus\{0\}$ we have $\abs{f_\varphi^{-1}(\{n\})}=\varphi(n)$. 
Using the fact that $\varphi$ is non-decreasing, it is straightforward to check that $f_\varphi$ is subadditive. We then see that $f_\varphi\circ d$ is still a metric as soon as $d$ is a metric with values in $\N$. Also observe that if $\varphi(n)=1$ for all $n\leq N$, then $f_\varphi(n)=n$ for all $n\leq N$. We then set $d_\varphi:=f_\varphi\circ d$.

We use this construction twice; begin by picking $x_0$ with an unbounded orbit. We first set $\varphi(i)=1$ for all $i \le N$. Then, we inductively define $\varphi(N+j)$ and find elements $\gamma_k \in \Gamma$ in such a way that (defining the empty sum as being equal to $0$)
$$\forall k\ge 0 \quad N+ \sum_{j=1}^k \varphi(N+j) < d(x_0,\gamma_k x_0) < N+ \sum_{j=1}^{k+1} \varphi(N+j).  $$ 
Then $d$ and $d_\varphi$ coincide at scale $N$, and $f_\varphi(d(x_0,\gamma_k x_0))=N+k+1$ for all $k \ge 0$. In particular, for every $n> N$ there exists some $\gamma\in\Gamma$ such that $d_\varphi(x_0,\gamma x_0)=n$. So we may as well assume that $d$ already satisfies this surjectivity condition. 

We then let $\varphi(n)=1$ for $n\leq N$ and $\varphi(n)=2n$ for $n \ge N+1$. Let us show why $d_\varphi$ is now as wanted.

Pick $F \Subset \Gamma$, without loss of generality we assume $x_0\in F$. Let $N' > N$ be such that $N'>2\mathrm{diam}_d(F)$.  By our surjectivity assumption on $d$ there is $\gamma\in\Gamma$ such that $$d(x_0,\gamma x_0)=N'+\sum_{i=1}^{N'-1} \varphi(i).$$
Then for every $x,y\in F$ we have 
\begin{align*}
\abs{d(x,\gamma y)-d(x_0,\gamma x_0)}&\leq \abs{d(x,\gamma y)-d(x,\gamma x_0)}+\abs{d(x,\gamma x_0)-d(x_0,\gamma x_0)}\\
&\leq d(\gamma y,\gamma x_0)+d(x,x_0)\\
&\leq 2\mathrm{diam}(F)\\ &<N'.
\end{align*}
So since $d(x_0,\gamma x_0)=N'+\sum_{i=1}^{N'-1} \varphi(i)$ we conclude that 
$$\sum_{i=1}^{N'-1}\varphi(i)< d(x,\gamma y)<\sum_{i=1}^{N'-1}\varphi(i)+2N'=\sum_{i=1}^{N'}\varphi(i) ,$$ 
hence $d_\varphi(x,\gamma y)=N'$ as wanted.
\end{proof}

Note that by rescaling, the above lemma applies for any metric taking values in $\alpha\N$ for some $\alpha>0$.

\begin{proposition}\label{p:unboundedfreeaction}
Every countable group admits a free isometric action on $\U_\Q$ which strongly $\N$-disconnects finite sets.
\end{proposition}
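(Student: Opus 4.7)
I assume throughout that $\Gamma$ is countably infinite, since strong disconnection of an action on an unbounded space requires unbounded orbits. My strategy follows the bounded-case template: first build a base action on $\Gamma$ with the desired properties, then lift it to $\UQ$ via an equivariant Kat\v{e}tov-style tower. For the base, I would apply Lemma~\ref{lem:unbounded left invariant metric} to equip $\Gamma$ with an unbounded $\N$-valued left-invariant metric $d_\Gamma$ surjecting onto $\N$, and then the preceding lemma with $N=1$ to replace it by a metric $\tilde d = f_\varphi \circ d_\Gamma$ coinciding with $d_\Gamma$ at scale~$1$ (so that $\tilde d(e, \gamma) \geq 1$ whenever $\gamma \neq e$) and for which the left-regular action $\Gamma \curvearrowright (\Gamma, \tilde d)$ strongly disconnects finite sets. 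This base action is thus both strongly free and strongly disconnecting.

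Setting $X_0 = (\Gamma, \tilde d)$, I would then iterate the $\Q$-valued analog $X_{n+1} := E_\Q(X_n) \times_{X_n} \Gamma$ of the construction of Section~\ref{sec: ext with params}, using the amalgam metric \emph{without} the $\min(1,\cdot)$ truncation. The direct limit $X_\infty = \bigcup_n X_n$ inherits an isometric $\Gamma$-action and satisfies the extension property by the usual one-point-extension argument, so it is equivariantly isomorphic to $\UQ$. Strong freeness propagates through each Kat\v{e}tov step: for a finitely supported extension $f$ and $\gamma \neq 1$, the amalgam formula together with the Kat\v{e}tov inequality $f(x) + f(y) \geq d(x, y)$ gives
\[
d(f, \gamma f) \;=\; \inf_{x} \bigl(f(x) + f(\gamma^{-1}x)\bigr) \;\geq\; \inf_{x} d(x, \gamma^{-1}x) \;\geq\; 1,
\]
and unlike the bounded case no constant extension can obstruct this, since constants are not finitely supported when the underlying space is unbounded.

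The hard part will be showing that strong disconnection survives the tower. Any $F \Subset X_\infty$ lies in some $X_N$, and unwinding the tower expresses each element of $F$ via a finite support inside $X_0 = \Gamma$; applying base-level strong disconnection to the union of these supports yields $\gamma \in \Gamma$ making the relevant cross-distances in $\Gamma$ uniformly equal to some large $K$. The naive amalgam computation, however, produces
\[
d(\gamma f_1, f_2) \;=\; K + \min_{y \in S_{f_1}} f_1(y) + \min_{z \in S_{f_2}} f_2(z),
\]
so that the distances in $X_\infty$ are shifted by the intrinsic offsets $\alpha_f := \min f$ and are no longer uniform across $F$. The expected remedy exploits the surjectivity of $\tilde d$ onto all sufficiently large integers that is built into Lemma~\ref{lem:unbounded left invariant metric}: one strengthens the base property to allow prescribing each cross-distance $\tilde d(\gamma y, z)$ pair by pair (absorbing the $\alpha$-offsets into the prescription), and propagates this refined prescription-of-distances property inductively through the tower in parallel with the Kat\v{e}tov machinery, so that the amalgam delivers a uniform distance $K$ in $X_\infty$.
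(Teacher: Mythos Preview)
Your diagnosis of the difficulty is exactly right: strong disconnection does not survive the Kat\v{e}tov step, because the amalgam formula produces $d(\gamma f_1,f_2)=K+\alpha_{f_1}+\alpha_{f_2}$ with offsets depending on the pair. But your proposed remedy---upgrading the base property to a ``pair-by-pair prescription of cross-distances'' and propagating that inductively---is where the argument breaks down. What you would need at the base is, for a finite $F$ and arbitrary target values $K_{y,z}$, a \emph{single} $\gamma$ realising $\tilde d(\gamma y,z)=K_{y,z}$ for all $(y,z)\in F\times F$. This is far stronger than strong disconnection; the lemma you invoke produces metrics where all cross-distances are forced to be \emph{equal}, not independently adjustable, and there is no mechanism in Lemma~\ref{lem:unbounded left invariant metric} or its successor to manufacture such flexibility. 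Even if one could arrange it at the base, you would then have to show the refined property itself survives each $E_\Q(\cdot)\times_{\cdot}\Gamma$ step, which reintroduces the same offset problem one level up.

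The paper avoids this trap by a different mechanism: it does \emph{not} try to carry strong disconnection through the tower. Instead, it re-applies the metric-modification lemma \emph{after every Kat\v{e}tov step}, so that at stage $n$ one first forms $E_{1/n!}(X_n)\times_{X_n}\Gamma$ with its natural amalgam metric and then replaces that metric by a new $d_{n+1}$ which agrees with it at scale $2^{n+1}$ but restores strong disconnection. Because the modifications only affect larger and larger scales, the sequence $(d_n)$ stabilises on any pair, giving a well-defined limit metric on $X_\infty$; the extension property follows since it is a small-scale phenomenon, and strong disconnection in the limit is read off from the stage where the relevant finite set first appears. Note also that the paper uses $E_{1/n!}$ rather than $E_\Q$ precisely so that each intermediate metric is $\alpha\N$-valued and the modification lemma applies.
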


In the proof we make the natural unbounded modification of extensions with parameters (Section \ref{sec: ext with params}): if we are given a metric space $(Y,d)$, a \emph{nonempty} subset $X\subseteq Y$ and a set $A$, we let $(Y\times_XA,d)$ be the quotient metric space obtained from the pseudo-metric on $Y\times A$ given by
$$d((y,a),(y',a'))=\left\{
\begin{array}{cl}
\inf_{x\in X} d_Y(y,x)+d_Y(x,y')&\text{ if }a'\neq a, \\
d_Y(y,y') &\text{ if }a'=a.
\end{array}
\right.
$$
\begin{proof}[Proof of Proposition \ref{p:unboundedfreeaction}]
Start with $\Gamma$ acting on itself by isometries for an unbounded left-invariant metric provided by lemma \ref{lem:unbounded left invariant metric}, denote it $X_0$ and observe that this action is free. Then consider the set $E_{1/2}(X_0)$ of finitely supported Kat\v{e}tov extensions of $X_0$ taking values in $\frac12\N$ and equip it with the amalgam distance $d(f,g)=\inf_x f(x)+g(x)$. Then let $X_1=E_{1/2}(X_0)\times_{X_0}\Gamma$. The diagonal action of $\Gamma$ on $(X_1,d)$ is still free and has an unbounded orbit since $X_0$ embeds isometrically, but we may lose strong disconnection. 

So we let $d_1$ be a metric provided by the previous lemma with $N=2$. At stage $n$, $(X_n,d_n)$ being constructed, we consider $X_{n+1}=E_{1/n!}(X_n)\times_{X_n}\Gamma$ with a metric $d_{n+1}$ which is the same as $d_n$ at scale $2^{n+1}$ but for which the diagonal $\Gamma$-action strongly $\N$-disconnects finite sets, which is possible via the previous lemma. 

For $x,y\in\bigcup_n X_n=:X_\infty$, since $2^{n+1}\to\infty$ and $d_{n+1}$ coincides with $d_n$ at scale $2^{n+1}$,   the sequence $d_n(x,y)$ is stationary and we let $d(x,y)$ be the limit. It is easy to check that $d$ is still a metric.

Furthermore, $X_\infty$ has the extension property for $\Q$-valued metrics. Indeed if we have a $\Q$-valued Kat\v{e}tov function on some finite $F\subseteq X_\infty$, we take $n$ large enough so that $2^n\geq\max(\diam(F),\norm f_\infty)$, $f$ takes values in $\frac 1{n!}\N$ and $F$ is contained in $X_n$. We then have that $f$ is realised in the metric space $E_{1/n!}(X_n)$. Since the latter embeds in $X_{n+1}$ in a way which preserves the metric at scale $2^{n+1}$, $f$ is actually realised in $X_{n+1}$ and thus in $X_\infty$ since the inclusion $(X_{n+1},d_{n+1})\subseteq (X_\infty,d)$ preserves the metric at scale $2^{n+1}$. 

Let us finally show that the $\Gamma$-action on $X_\infty$ strongly $\N$-disconnects finite sets. Take $F\subseteq X_\infty$ finite, let $n\in\N$ such that $F\subseteq X_n$. Since the $\Gamma$-action on $(X_n,d_n)$ strongly $\N$-disconnects finite sets, we find $N\in\N$ such that for any integer $k\geq N$, there is $\gamma\in\Gamma$ such that $d_n(x,\gamma y)=k$ for all $x,y\in F$. We may as well assume $N\geq 2^{n+1}$. 

Now let $k\geq N$. Observe that the definition of the metrics $d_k$ implies that for every $l\geq N$, there is $\gamma\in\Gamma$ such that $d_l(x,\gamma y)=k$ for all $x,y\in F$ (indeed their restrictions to $X_n$ are obtained by composing $d_n$ with finitely many surjective maps $\N\to\N$).

Let $l$ such that $2^{l+1}>k$, there is $\gamma\in\Gamma$  such that $d_l(x,\gamma y)=k$ for all $x,y\in F$. Since $d_l$ and the restriction of $d$ to $X_l$ coincide at scale $2^{l+1}$, this implies that 
$d(x,\gamma y)=k$ for all $x,y\in F$ as wanted.
\end{proof}

\subsection{Dense free products}
Let $\Gamma$ and $\Lambda$ be two countable infinite groups acting faithfully on $\mathbb U_\Q$ and view $\Gamma,\Lambda<{\rm Iso}(U_\Q)$. Then, for all $\alpha\in\Iso(\mathbb U_\Q)$, there exists a unique group homomorphism $\pi_\alpha\,:\,\Gamma*\Lambda\rightarrow \Iso(\mathbb U_\Q)$ such that
$$\pi_\alpha(g)=\left\{\begin{array}{lcl} g&\text{if}& g\in\Gamma,\\\alpha^{-1}g\alpha&\text{if}& g\in\Lambda.\end{array}\right.$$
\begin{theorem}
Let $\Gamma$ and $\Lambda$ be two countable infinite groups acting freely on $\mathbb U_\Q$. Suppose both actions strongly $\N$-disconnect finite sets. Then the set of $\alpha\in\Iso(\mathbb U_\Q)$ so that $\pi_\alpha$ is faithful and homogeneous is dense $G_\delta$.
\end{theorem}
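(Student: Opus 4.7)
The plan is to adapt the Baire-category strategy of Lemmas \ref{LemHomogeneous} and \ref{LemFaithful} to the unbounded setting, in the special case of trivial amalgam ($\Sigma = \{1\}$, so that the ``commutant'' subgroup $Z$ is all of $\Iso(\mathbb U_\Q)$). Letting $\pi_\alpha \colon \Gamma * \Lambda \to \Iso(\mathbb U_\Q)$ be defined by $\pi_\alpha(h) = h$ for $h \in \Gamma$ and $\pi_\alpha(\lambda) = \alpha^{-1} \lambda \alpha$ for $\lambda \in \Lambda$, I would write the target set as $U \cap V$, where $U = \bigcap_\varphi U_\varphi$ (indexed by finite partial isometries $\varphi$ of $\mathbb U_\Q$) with $U_\varphi := \{\alpha : \exists g \in \Gamma * \Lambda,\ \pi_\alpha(g)_{\restriction \dom(\varphi)} = \varphi\}$, and $V = \bigcap_{g \neq 1} V_g$ with $V_g := \{\alpha : \pi_\alpha(g) \neq \mathrm{id}\}$. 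Each $U_\varphi$ and $V_g$ is open in $\Iso(\mathbb U_\Q)$, so by Baire it suffices to check density of each. The role of the ``highly core-free + mixing'' hypotheses in the bounded case will be played here by strong disconnection, and the role of Proposition \ref{Extension} (extending an equivariant partial isometry to a global one) is taken over by the standard back-and-forth extension of finite partial isometries of $\mathbb U_\Q$, since the equivariance constraint is vacuous.

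For density of $U_\varphi$, given $\alpha$ and $F \Subset \mathbb U_\Q$, I would pick $K \in \mathbb N$ very large and aim, as in Lemma \ref{LemHomogeneous}, for $g = h_1^{-1} \lambda g_1$ with $g_1, h_1 \in \Gamma$ and $\lambda \in \Lambda$. Iterated applications of strong disconnection of the $\Gamma$-action on growing finite subsets produce $g_1, h_1$ making every cross-distance between $g_1 \dom(\varphi)$, $h_1 \rng(\varphi)$, $F$ and $\alpha(F)$ equal to $K$. By universality of $\mathbb U_\Q$, realize an isometric copy $\{z_x : x \in \dom(\varphi)\}$ of $\dom(\varphi)$ at distance $K$ from $\alpha(F)$; then strong disconnection of the $\Lambda$-action applied to $\alpha(F) \cup \{z_x\}$ produces $\lambda$ for which all cross-distances still equal $K$. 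Define the partial map
\[
\beta_0 \colon F \cup g_1 \dom(\varphi) \cup h_1 \rng(\varphi) \to \alpha(F) \cup \{z_x\} \cup \{\lambda z_x\}
\]
by $\beta_0 = \alpha$ on $F$, $\beta_0(g_1 x) = z_x$ and $\beta_0(h_1 \varphi(x)) = \lambda z_x$; the distance verifications are identical in spirit to the bounded case, with $K$ in place of $1$. Any back-and-forth extension $\beta \in \Iso(\mathbb U_\Q)$ of $\beta_0$ then satisfies $\pi_\beta(g)_{\restriction \dom(\varphi)} = \varphi$.

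For the density of $V_g$, when $g \in \Gamma \cup \Lambda$ is nontrivial, strong freeness gives $\pi_\alpha(g) \neq \mathrm{id}$ for every $\alpha$. For a reduced word $g = a_k \cdots a_1$ of length $k \geq 2$ alternating between $\Gamma$ and $\Lambda$, I would construct inductively a sequence of ``fresh'' points $y_0, \ldots, y_k \in \mathbb U_\Q$ together with a partial map $\beta_0 \supseteq \alpha_{\restriction F}$ so that $\pi_\beta(a_i)(y_{i-1}) = y_i$ holds for every $i$ once $\beta_0$ is extended by back-and-forth. At a $\Gamma$-letter $a_i$, $y_i := a_i y_{i-1}$ is forced; choosing $y_{i-1}$ with strong disconnection relative to the $a_i$-preimage of the already-built data keeps $y_i$ at distance $K$ from everything prior. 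At a $\Lambda$-letter $a_j$, one freely picks $y_j$ and an auxiliary fresh point $z_j$ (via strong disconnection of $\Lambda$) and encodes the equation by $\beta_0(y_{j-1}) := z_j$, $\beta_0(y_j) := a_j z_j$. The freedom at $\Lambda$-letters (or in choosing $y_{k-1}$ when $a_k \in \Gamma$) lets us arrange $y_k \neq y_0$, so the back-and-forth extension $\beta$ of $\beta_0$ gives $\pi_\beta(g)(y_0) = y_k \neq y_0$.

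The main obstacle is precisely this faithfulness construction. In the bounded-case analogue (Lemma \ref{LemFaithful}) one hypothesises that the full free product $\Gamma *_\Sigma \Gamma_2$ acts freely on $\mathbb U_S$ with the extension property, which provides the orbit of a far-away point automatically; here we only have two separate actions, and so the orbit $y_0, \ldots, y_k$ must be interleaved by hand with $\beta_0$. Checking that $\beta_0$ remains a genuine partial isometry throughout the inductive construction, while simultaneously arranging $y_k \neq y_0$, will require repeated careful applications of strong disconnection together with the universality of $\mathbb U_\Q$.
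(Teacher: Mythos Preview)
Your treatment of homogeneity is essentially the paper's Lemma~\ref{lem: free prod homogeneous for unbounded Urysohn}: the paper also produces $\gamma_1,\gamma_2\in\Gamma$ and $\lambda\in\Lambda$ via strong disconnection, assembles a finite partial isometry agreeing with the given $\beta$ on $F$, and extends by back-and-forth. A cosmetic difference is that the paper dispenses with your auxiliary points $z_x$: it simply declares the new partial isometry to be the \emph{identity} on $\gamma_1\dom(\varphi)$, which is legitimate once $\gamma_1\dom(\varphi)$ sits at a fixed distance $K$ from both $F$ and $\beta F$. Your variant with the $z_x$'s also works.

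For faithfulness the paper takes a genuinely different, and much lighter, route than your inductive orbit construction. Instead of building a witness $\beta$ close to $\alpha$ with $\pi_\beta(g)\neq\mathrm{id}$ directly, the paper first proves a local statement at the identity: for any reduced $w\in\Gamma*\Lambda$ not lying in $\Gamma$ and any fixed $\beta\in\Iso(\mathbb U_\Q)$, the set $\{\alpha:\pi_\alpha(w)=\beta\}$ does not contain $1$ in its interior. This is established by a back-and-forth construction of an \emph{involution} $\alpha$ in any prescribed neighborhood of $1$ with $w(\alpha)\neq\beta$, using only that the actions are free. The density of each $V_g$ then follows by a one-line reduction: if $\{\alpha:\pi_\alpha(g)=1\}$ had interior around some $\alpha_0$, one rewrites $\pi_{\alpha_0\beta}(g)$ as a word in $\beta$ for the $\alpha_0^{-1}$-conjugate of the $\Gamma$-action (still free), contradicting the local lemma.

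What each approach buys: the paper's involution-plus-shift argument requires only freeness of the two actions, so its faithfulness half is strictly more general than what your sketch would yield; it also sidesteps entirely the bookkeeping you flag as the main obstacle. Your direct construction stays closer to the bounded-case template of Lemma~\ref{LemFaithful} and is in principle workable (build the range alongside the domain, realizing each new point by universality), but carrying it out rigorously is noticeably more work than what the paper does.
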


As usual we decompose the proof in two parts. 
\begin{lemma}\label{lem: free prod homogeneous for unbounded Urysohn}
Let $\Gamma$ and $\Lambda$ be two countable infinite groups acting on $\mathbb U_\Q$, suppose their actions strongly $\N$-disconnect finite sets. Then the set of $\alpha\in\Iso(\mathbb U_\Q)$ so that $\pi_\alpha$ is homogeneous is dense $G_\delta$.
\end{lemma}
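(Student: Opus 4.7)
The plan is to mimic the proof of Lemma \ref{LemHomogeneous} in the bounded case, with the role of ``highly core-free'' replaced by strong disconnection, and ``distance $=1$'' replaced by ``distance $=K$'' for a suitable large integer $K$. Writing $U=\bigcap_{\varphi\in P_f(\U_\Q)}U_\varphi$ where $U_\varphi:=\{\alpha\in\Iso(\U_\Q)\colon\exists g\in\Gamma*\Lambda,\;\pi_\alpha(g)_{\restriction\dom(\varphi)}=\varphi\}$, each $U_\varphi$ is open by definition, so by the Baire category theorem it suffices to establish density. Given $\varphi\in P_f(\U_\Q)$, $\alpha\in\Iso(\U_\Q)$ and $F\Subset\U_\Q$, I must produce $\beta\in\Iso(\U_\Q)$ with $\beta_{\restriction F}=\alpha_{\restriction F}$ and some $g\in\Gamma*\Lambda$ with $\pi_\beta(g)_{\restriction\dom(\varphi)}=\varphi$. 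I aim for $g=h_1^{-1}g_2g_1$ with $g_1,h_1\in\Gamma$ and $g_2\in\Lambda$.

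First, set $F^*:=F\cup\alpha(F)\cup\dom(\varphi)\cup\rng(\varphi)$ and use strong disconnection for the $\Gamma$-action on $F^*$ to find $g_1\in\Gamma$ and a large $K_1\in\N$ with $d(x,g_1y)=K_1$ for all $x,y\in F^*$. Then set $F^{**}:=F^*\cup g_1\dom(\varphi)\cup\alpha(g_1\dom(\varphi))$, and apply strong disconnection for both the $\Gamma$ and the $\Lambda$ actions to this common finite set: choose a single integer $K$ exceeding both disconnection thresholds, and take $h_1\in\Gamma$ and $g_2\in\Lambda$ with $d(x,h_1y)=K=d(x,g_2y)$ for all $x,y\in F^{**}$. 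Using a single common value $K$ for both disconnections is the key technical point that makes the cross-distance checks match up below.

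Define $\beta_0\colon A\to B$, where $A:=F\sqcup g_1\dom(\varphi)\sqcup h_1\rng(\varphi)$ and $B:=\alpha(F)\sqcup\alpha(g_1\dom(\varphi))\sqcup g_2\alpha(g_1\dom(\varphi))$, by $\beta_0(u)=\alpha(u)$ for $u\in F\cup g_1\dom(\varphi)$ and $\beta_0(h_1\varphi(x))=g_2\alpha(g_1x)$ for $x\in\dom(\varphi)$; the three pieces on each side are pairwise disjoint because the strong disconnection distances are positive. Intra-piece isometry conditions are automatic since $\alpha$, $g_1$, $h_1$, $g_2$ are isometries and $\varphi$ is a partial isometry. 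For the cross-pieces, the $F$ versus $g_1\dom(\varphi)$ distances are both $K_1$ (on the image via $\alpha$), while the $F$ versus $h_1\rng(\varphi)$ and $g_1\dom(\varphi)$ versus $h_1\rng(\varphi)$ distances all equal $K$ on the source (by strong disconnection with $h_1$ on $F^{**}$) and $K$ on the target (by strong disconnection with $g_2$ on $F^{**}$). Hence $\beta_0$ is a finite partial isometry of $\U_\Q$, and by homogeneity of $\U_\Q$ it extends to some $\beta\in\Iso(\U_\Q)$.

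By construction $\beta_{\restriction F}=\alpha_{\restriction F}$, and for any $x\in\dom(\varphi)$, using $\beta(g_1x)=\alpha(g_1x)$ and $\beta(h_1\varphi(x))=g_2\alpha(g_1x)$ we compute
\[
\pi_\beta(h_1^{-1}g_2g_1)(x)=h_1^{-1}\beta^{-1}g_2\beta(g_1x)=h_1^{-1}\beta^{-1}(g_2\alpha(g_1x))=h_1^{-1}\beta^{-1}\beta(h_1\varphi(x))=\varphi(x),
\]
so $\beta\in U_\varphi$. The main obstacle is keeping the various distance thresholds synchronized across the separate $\Gamma$ and $\Lambda$ strong disconnection arguments so that the cross-distance checks match up; this is resolved precisely by fixing one common value $K$ exceeding both thresholds on $F^{**}$, which is possible since strong disconnection allows arbitrarily large distances.
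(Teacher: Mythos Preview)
Your proof is correct and follows essentially the same approach as the paper's: write the desired element as a product $\Gamma\cdot\Lambda\cdot\Gamma$, use strong disconnection twice (the second time with a common value $K$ for both the $\Gamma$- and $\Lambda$-actions), build a finite partial isometry in three blocks, and extend by homogeneity of $\U_\Q$. The only cosmetic difference is that the paper sets the new isometry equal to the identity on $\gamma_1\dom(\varphi)$ rather than to $\alpha$, which spares it from including $\alpha(g_1\dom(\varphi))$ in the second disconnection set; your choice works just as well.
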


\begin{proof}
Let $\varphi$ be a finite partial isometry, $\beta\in\Iso(\mathbb U_\Q)$, $F$ a finite subset of $\mathbb U_\Q$. We must find $\alpha$ coinciding with $\beta$ on $F$ such that for some $g\in\Gamma*\Lambda$ we have $\pi_\alpha(g)_{\restriction \dom \varphi}=\varphi$. 

We first find $\gamma_1\in\Gamma$ such that every element of $F\cup\beta F$ is at fixed distance $K$ from every element of $\gamma_1\dom\varphi$. Let $F'=F\cup\beta F\cup\gamma_1\dom\varphi$.

We then find $\gamma_2\in\Gamma$ and $\lambda\in\Lambda$ such that every element of $\gamma_2^{-1}\rng\varphi \cup \lambda\gamma_1\dom\varphi$ is at distance $K'$ from every element of $F'$ (this is where we use that strong disconnection works for \emph{any} large enough $K' \in \N$: this enables us to find $\gamma_2$ and $\lambda$ simultaneously)

Finally, define the finite partial isometry $\alpha$ with domain $F\sqcup\gamma_1\dom\varphi\sqcup\gamma_2^{1}\rng\varphi$ by $\alpha(x)=\beta(x)$ $\forall x\in F$, $\alpha(x)=x$ $\forall x\in\gamma_1\dom\varphi$, and $\alpha(\gamma_2^{-1}\varphi(x))=\lambda\gamma_1x$ $\forall x\in\dom\varphi$ (see Fig. \ref{fig: alpha unbounded case}).

\begin{figure}[!h]\
\includegraphics[scale=1]{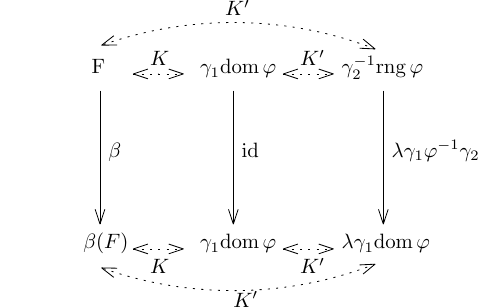}

\caption{$\alpha$ is a partial isometry.}
\label{fig: alpha unbounded case}
\end{figure}
We now extend $\alpha$ to an isometry of $\mathbb U_\Q$ which we still denote by $\alpha$. By construction $\alpha$ extends the restriction of $\beta$ to $F$. Moreover if we let $g=\gamma_2\lambda\gamma_1$, we have for all $x\in \dom\varphi$
$$ \pi_\alpha(g) x = \gamma_2\alpha^{-1}\lambda\alpha\gamma_1x =\gamma_2\alpha^{-1}\lambda\gamma_1x =\gamma_2\gamma_2^{-1}\varphi(x)=\varphi(x),$$
as wanted.
\end{proof}

\begin{lemma}
Let $\Gamma, \Lambda$ be two groups acting freely on $\U_\Q$, and $w$ be an element of $\Gamma *  \Lambda$ which does not belong to $\Gamma$. Then, for any $\beta \in  \Iso(\mathbb U_\Q)$ the set
$W_f=\{\alpha \colon  \pi_\alpha(w)=\beta \} $ does not contain $1$ in its interior.
\end{lemma}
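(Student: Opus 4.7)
The goal is to show that every basic open neighborhood $U_F := \{\alpha \in \Iso(\U_\Q) : \alpha\restriction_F = \id_F\}$ of $1$ (with $F \Subset \U_\Q$) contains some $\alpha$ with $\pi_\alpha(w) \neq \beta$. If $\pi_1(w) \neq \beta$ then $\alpha = 1$ trivially works, so I may assume $\beta = \pi_1(w)$ (the isometry of $\U_\Q$ obtained by composing the given actions of the $s_i$'s), and look for $\alpha \in U_F$ with $\pi_\alpha(w)(x_0) \neq \beta(x_0)$ for some point $x_0$.

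The first step is a length reduction. Write $w = s_n\cdots s_1$ in reduced form. If $s_n \in \Gamma$, then $\pi_\alpha(w) = s_n \pi_\alpha(s_{n-1}\cdots s_1)$, so $\pi_\alpha(w)=\beta$ is equivalent to $\pi_\alpha(s_{n-1}\cdots s_1) = s_n^{-1}\beta$, and the shorter word still lies outside $\Gamma$ (it still contains a $\Lambda$-letter). A symmetric argument handles $s_1 \in \Gamma$. By induction on length, one may assume either $w = \lambda \in \Lambda\setminus\{1\}$, or $w = s_n\cdots s_1$ reduced with $n\ge 2$ and $s_1,s_n \in \Lambda$.

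In the first case $w = \lambda$, pick $x_0 \in \U_\Q\setminus(F\cup \lambda^{-1}F)$, which is nonempty by finiteness of $F$, and set $y := \lambda x_0$, so $y\notin F$ and $y\neq x_0$ by freeness of the $\Lambda$-action. By the extension property of $\U_\Q$ there exists $y'\neq y$ with $d(y',p) = d(y,p)$ for every $p \in F\cup\{x_0\}$, and by homogeneity the partial isometry fixing $F\cup\{x_0\}$ pointwise and mapping $y\mapsto y'$ extends to an isometry $\alpha\in U_F$. Since $\alpha(y) = y' \neq y$ we have $\alpha^{-1}(y)\neq y$, and hence $\pi_\alpha(\lambda)(x_0) = \alpha^{-1}\lambda\alpha(x_0) = \alpha^{-1}(y) \neq y = \beta(x_0)$.

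For the case $w = s_n \cdots s_1$ with $n\ge 2$ and $s_1,s_n \in \Lambda$, I would perturb one intermediate step of the trajectory. Use the strong disconnection of both actions to choose $x_0$ so that the natural trajectory $z_i := s_i\cdots s_1 (x_0)$ consists of $n+1$ distinct points, all at pairwise distances larger than some constant $K$ bounding everything relevant and at large distance from $F$. Then find $z_1'\neq z_1$ at the same distances as $z_1$ from $F\cup \{x_0\}$; using strong disconnection, place $z_1'$ so far from all of $F\cup\{x_0,z_1,\ldots,z_n\}$ that the shifted trajectory $z_i' := s_i\cdots s_2 (z_1')$ stays far from everything as well, and all cross-distances $d(z_i,z_j')$ reduce to values already dictated by $s_2,\ldots,s_n$ acting on $z_1,z_1'$, which match by isometry. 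Construct the partial isometry fixing $F\cup\{x_0\} \cup \{z_2,\ldots,z_n\}\cup\{z_2',\ldots,z_n'\}$ pointwise and exchanging $z_1 \leftrightarrow z_1'$; extend it to an isometry $\alpha \in U_F$ by homogeneity. A step-by-step computation then yields $\pi_\alpha(w)(x_0) = s_n\cdots s_2(z_1')$, which differs from $\beta(x_0) = s_n\cdots s_1(x_0) = s_n\cdots s_2(z_1)$ since $z_1'\neq z_1$ and $s_n\cdots s_2$ is a bijection.

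The main obstacle is the consistency of the partial isometry in the second case: the swap $z_1\leftrightarrow z_1'$ must be compatible with fixing both full trajectories, and the most delicate conditions are the cross-distance equalities $d(z_1,z_j) = d(z_1',z_j)$ and $d(z_1,z_j') = d(z_1',z_j')$. This is exactly where the strong disconnection hypothesis becomes essential: pushing $z_1'$ sufficiently far away via strong disconnection collapses each such cross-distance to a value determined by a bounded portion of the trajectory, at which point the constraints are simultaneously satisfiable by the extension property of $\U_\Q$.
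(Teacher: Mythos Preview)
Your length reduction and the $w=\lambda$ case are fine. The $n\ge 2$ case, however, does not go through, and the issue is structural rather than cosmetic.

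First, the lemma assumes only that the two actions are \emph{free}; strong disconnection is not part of the hypothesis (and indeed, in the application of this lemma to Lemma~\ref{lem: free prod faithful for unbounded Urysohn}, the $\Gamma$-action has been conjugated by an arbitrary $\alpha\in\Iso(\U_\Q)$, so the paper deliberately works from freeness alone). You are not entitled to invoke strong disconnection here.

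Second, even granting strong disconnection, your construction is internally inconsistent. You ask for $z_1'$ to satisfy $d(z_1',p)=d(z_1,p)$ for all $p\in F\cup\{x_0\}$, and simultaneously to be ``far from all of $F\cup\{x_0,z_1,\ldots,z_n\}$''. But $d(z_1',x_0)=d(z_1,x_0)=d(s_1 x_0,x_0)$ is a fixed number; you cannot make it large. More seriously, for your swap $z_1\leftrightarrow z_1'$ to be isometric while fixing $z_2,\ldots,z_n$, you need $d(z_1,z_j)=d(z_1',z_j)$ for every $j\ge 2$, i.e.\ $d(z_1,s_j\cdots s_2(z_1))=d(z_1',s_j\cdots s_2(z_1))$. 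Nothing in your setup forces this: the $s_i$ are fixed letters of $w$, so strong disconnection (which produces \emph{some} group element moving a finite set far away) is irrelevant, and ``match by isometry'' does not apply since the two sides involve the same point $z_j$, not an isometric image of it.

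The paper avoids all of this by building $\alpha$ as a finite \emph{involution}, one step at a time along the trajectory $\alpha x,\lambda_n\alpha x,\alpha\lambda_n\alpha x,\gamma_n\alpha\lambda_n\alpha x,\ldots$. At each step where $\alpha$ must be applied to a point not yet in its domain, one uses the extension property of $\U_\Q$ to choose the image among infinitely many candidates, avoiding the finitely many forbidden points (those already in the trajectory, $F$, $x$, and $\beta(x)$). Freeness of the actions is used only to guarantee that applying a nontrivial $\gamma_k$ or $\lambda_k$ moves the current point. No metric control is needed beyond ``not equal to''.
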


\begin{proof}
We use the fact that for any finite subset $A \Subset \U_\Q$, any $\alpha \in \Iso(A)$ such that $\alpha^2=1$, and any $x \not \in A$, there exist infinitely many $y \in \U_\Q$ such that one can extend $\alpha$ to an isometric involution of $A \cup\{x,y\}$ by setting $\alpha(x)=y,  \alpha(y)=x$.

Fix an open neighborhood $U$ of $1$, which we may assume is of the form 
$$U=\{\alpha \colon  \forall x \in F \ \alpha(x)=x\}$$ for some finite subset $F$ of $\U_\Q$. Let $w=\lambda_1\gamma_2\ldots \gamma_n \lambda_n$ be the reduced form of $w$ (one can reduce to the case where $w$ has such a reduced form by multiplying $\beta$ on the left and/or the right by an element of $\Gamma$). Below it will be useful to write $\gamma_1=1$.

For $\alpha \in \Iso(\U_\Q)$, we write 
$$w(\alpha)= (\alpha \lambda_1 \alpha) \gamma_2 (\alpha \lambda_2 \alpha) \gamma_3 \ldots \gamma_n (\alpha \lambda_n \alpha)  $$
Our aim is to find an involution $\alpha \in U$ such that $w(\alpha) \ne \beta$, since this will imply that $U$ is not contained in the interior of $W_f$. We begin by picking $x \in \U_Q \setminus F$, and find $y \in \U_\Q \setminus (F\cup \lambda_n^{-1}(F) \cup \{x,\lambda_n^{-1}x, \beta(x), \lambda_n^{-1}\beta(x)\})$ such that setting 
$$\forall z \in F \ \alpha(z)=z \ , \quad \alpha(x)=y \ , \quad \alpha(y)=x $$
defines an isometric involution $\alpha$ of $F \cup \{x,y\}$. We now proceed inductively 
to extend $\alpha$ to a partial isometric involution such that the elements
$\alpha x, \lambda_n \alpha x, \alpha \lambda_n \alpha x,\gamma_n \alpha \lambda_n \alpha x, \ldots, \alpha \lambda_1 \alpha \gamma_2 \ldots \gamma_n \alpha \lambda_n \alpha x$ are all defined, pairwise distinct, and do not belong to $F \cup \{x,\beta(x)\}$. Then any extension of $\alpha$ to an isometric involution of $\U_\Q$ is an element of $U$ such that $w(\alpha) \ne\beta $. During the inductive process, we have to deal with two different cases:
\begin{enumerate}
\item We want to define $\alpha (\lambda_k \ldots \lambda_n \alpha x)$ for some $k \ge 1$ (note that our inductive conditions ensure that $\lambda_k \ldots \lambda_n \alpha x$ is not in the domain of $\alpha)$. Set 
$$B= \{x,\alpha x, \lambda_n \alpha x, \alpha \lambda_n \alpha x, \gamma_n \alpha \lambda_n \alpha x,\ldots, \lambda_k \ldots \lambda_n \alpha x\} \cup \{\beta(x)\}$$
Then, we can find an element $z$ which is not in $F\cup B \cup \gamma_k^{-1}(B) \cup \gamma_k^{-1}(F)$ such that setting 
$$\alpha (\lambda_k \ldots \lambda_n \alpha x)=z \ , \quad \alpha(z)= \lambda_k \ldots \lambda_n \alpha x $$ 
is our desired extension of $\alpha$ (note that in our inductive conditions we require also $\gamma_k z \ne z$ for $k \ge 2$, but this is automatic since nontrivial elements of $\Gamma$ do not have any fixed points).
\item We want to define $\alpha (\gamma_k \alpha \lambda_k \ldots \lambda_n \alpha x)$ for some $k \ge 2$: apply the same reasoning as above, replacing $\gamma_k$ with $\lambda_{k-1}$ and $B$ with 
$$ \{x,\alpha x, \lambda_n \alpha x, \alpha \lambda_n \alpha x, \gamma_n \alpha \lambda_n \alpha x,\ldots, \gamma_k \alpha \lambda_k \ldots \lambda_n \alpha x\} \cup \{\beta(x)\} \ .$$
\end{enumerate} 
\end{proof}

\begin{lemma}\label{lem: free prod faithful for unbounded Urysohn}
Let $\Gamma$ and $\Lambda$ be two countable  groups acting freely on $\mathbb U_\Q$. Then the set of $\alpha\in\Iso(\mathbb U_\Q)$ so that $\pi_\alpha$ is faithful is dense $G_\delta$.
\end{lemma}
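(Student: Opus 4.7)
By the Baire category theorem, writing $V = \bigcap_{w \in (\Gamma * \Lambda) \setminus \{1\}} V_w$ with $V_w := \{\alpha : \pi_\alpha(w) \neq 1\}$ open, it suffices to show that every $V_w$ is dense. When $w \in \Gamma \setminus \{1\}$, freeness of $\Gamma \curvearrowright \U_\Q$ gives $\pi_\alpha(w) = w \neq 1$, so $V_w = \Iso(\U_\Q)$. Fix then $w \notin \Gamma$. We need: for every $\alpha_0 \in \Iso(\U_\Q)$ and every finite $F \subset \U_\Q$, some $\alpha$ with $\alpha|_F = \alpha_0|_F$ and $\pi_\alpha(w) \neq 1$.

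Factor $w = g\,w'\,g'$ with $g, g' \in \Gamma$ and $w' = \lambda_1 \gamma_2 \lambda_2 \cdots \gamma_n \lambda_n$ reduced, $n \geq 1$, starting and ending with elements of $\Lambda \setminus \{1\}$; then $\pi_\alpha(w) = 1$ is equivalent to $\pi_\alpha(w') = g^{-1}(g')^{-1}$. Writing $\beta := g^{-1}(g')^{-1} \in \Gamma$, the task reduces to producing $\alpha$ extending $\alpha_0|_F$ with $\pi_\alpha(w') \neq \beta$. This is a strengthening of the preceding lemma, in which neighborhoods of the identity are replaced by neighborhoods of an arbitrary $\alpha_0$.

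The construction mimics that of the preceding lemma, with two modifications: the seed partial isometry is $\alpha_0|_F$ rather than $\id_F$, and we drop the involution requirement (it was used there only to identify $\alpha$ with $\alpha^{-1}$), tracking instead the domain and range of the partial isometry being built separately. Pick $x_0 \in \U_\Q \setminus (F \cup \alpha_0(F))$, and expand
\[
\pi_\alpha(w')(x_0) = \alpha\lambda_1\alpha^{-1}\gamma_2\alpha\lambda_2\alpha^{-1}\cdots\gamma_n\alpha\lambda_n\alpha^{-1}(x_0),
\]
which involves $2n$ evaluations of $\alpha^{\pm 1}$ when read right to left. Starting from $\alpha_0|_F$, build inductively a finite partial isometry $\alpha_1$ by adding, at each step, a single pair $(p,q)$: either a new $p \notin \dom(\alpha_1)$ (when the next step requires $\alpha(p)$) or a new $q \notin \rng(\alpha_1)$ (when it requires $\alpha^{-1}(q)$). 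The Kat\v{e}tov one-point extension property for $\U_\Q$ provides infinitely many valid choices of the new coordinate at each step, so we can additionally impose any finite list of forbidden values; freeness of the $\Gamma$- and $\Lambda$-actions guarantees that the intermediate orbit points stay pairwise distinct and away from $F \cup \alpha_0(F)$. At the very last step, we further require the new coordinate to differ from $\beta(x_0)$, forcing $\pi_{\alpha_1}(w')(x_0) \neq \beta(x_0)$. Extending $\alpha_1$ to a full isometry $\alpha$ of $\U_\Q$ by ultrahomogeneity yields the desired element of $V_w$.

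The main technical obstacle is the bookkeeping: at each step the newly added coordinate must simultaneously preserve the isometric property of $\alpha_1$, avoid collisions with all previously constructed intermediate points and with $F \cup \alpha_0(F)$, and (at the last step) avoid $\beta(x_0)$. Each of these is a finite constraint, while the Kat\v{e}tov extension property supplies infinitely many candidates, so the induction proceeds without obstruction.
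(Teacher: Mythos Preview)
Your argument is correct and is essentially a direct generalization of the paper's preceding lemma, but the paper itself takes a slightly different route to Lemma~\ref{lem: free prod faithful for unbounded Urysohn}. Rather than redoing the inductive construction in a neighborhood of an arbitrary $\alpha_0$, the paper keeps the preceding lemma exactly as stated (involutions near the identity) and then applies a conjugation trick: if $\pi_{\alpha_0\beta}(w)=1$ for all $\beta$ in a neighborhood $V$ of $1$, one expands
\[
\pi_{\alpha_0\beta}(w)=\gamma_1(\alpha_0\beta)\lambda_1(\alpha_0\beta)^{-1}\gamma_2\cdots
\]
and rewrites this as a word in the $\lambda_i^{\beta}$ and the $\gamma_i^{\alpha_0^{-1}}$, obtaining an equation of the form $\pi'_\beta(w')=\text{const}$ for all $\beta\in V$, where $\pi'$ uses the \emph{conjugated} free $\Gamma$-action $\gamma\mapsto\alpha_0^{-1}\gamma\alpha_0$. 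This contradicts the preceding lemma applied to that conjugate action.

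Your approach avoids this reduction at the cost of slightly heavier bookkeeping: you must seed the partial isometry with $\alpha_0|_F$ rather than $\id_F$, and since you drop the involution constraint you must track domain and range separately. Both work; the paper's version isolates a cleaner auxiliary statement and reuses it via conjugation, while yours is more self-contained. One cosmetic point: with the paper's convention $\pi_\alpha(\lambda)=\alpha^{-1}\lambda\alpha$, your displayed expansion should read $\alpha^{-1}\lambda_1\alpha\gamma_2\alpha^{-1}\lambda_2\alpha\cdots$, but this is immaterial to the argument.
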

\begin{proof}
Using the Baire category theorem, it is enough to show that for any $w \in \Gamma * \Lambda \setminus \{1\}$, the set 
$\{\alpha \colon  \pi_\alpha(w)=1\}$ has empty interior. This is true by assumption if $w$ belongs to either $\Gamma$ or $\Lambda$; if this result is false for some $w$, then we have some $\alpha \in \Iso(\U_\Q)$ and a neighborhood $V$ of $1$ such that $\pi_{\alpha \beta}(w)=1$ for any $\beta \in V$.

We can write $w= \gamma_1\lambda_1 \ldots \gamma_n \lambda_n$, with at least one $\lambda_i$ different from $1$. Since
\begin{align*}
\pi_{\alpha \beta}(w) &=  \gamma_1 \lambda_1^{\alpha \beta} \ldots \gamma_n \lambda_n^{\alpha \beta} \\
                      &=  (\gamma_1 \alpha) \lambda_1^{\beta} \gamma_2^{\alpha^{-1}} \lambda_2^\beta \ldots \gamma_n^{\alpha^{-1}}\lambda_n^{\beta}\alpha^{-1}
\end{align*}
we see that $\lambda_1^\beta \gamma_2^{\alpha^{-1}} \lambda_2^\beta \ldots \gamma_n^{\alpha^{-1}}\lambda_n^{\beta}= \alpha^{-1} \gamma_1^{-1} \alpha$ for all $\beta \in V$ and this contradicts our previous lemma (since the conjugate of the action of $\Gamma$ by $\alpha^{-1}$ is a free action of $\Gamma$).
\end{proof}

\begin{corollary}\label{CorUnbounded}
Every free product of infinite groups admits a faithful homogeneous action on the rational Urysohn space.
\end{corollary}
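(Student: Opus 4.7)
The plan is to deduce the corollary almost immediately by combining the two main technical results of this section: Proposition \ref{p:unboundedfreeaction}, which produces for any countable group an isometric action on $\U_\Q$ that is simultaneously strongly free and strongly disconnects finite sets, and the Baire-category theorem stated just above (whose content is packaged in Lemmas \ref{lem: free prod homogeneous for unbounded Urysohn} and \ref{lem: free prod faithful for unbounded Urysohn}).

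Given two infinite countable groups $\Gamma$ and $\Lambda$, I would first invoke Proposition \ref{p:unboundedfreeaction} twice to fix an isometric action of $\Gamma$ on $\U_\Q$ and an isometric action of $\Lambda$ on $\U_\Q$, both of which are strongly free and strongly disconnect finite sets. Viewing $\Gamma$ and $\Lambda$ as subgroups of $\Iso(\U_\Q)$ in this way, the hypotheses of the preceding theorem are satisfied, so the set
\[
O=\{\alpha\in\Iso(\U_\Q)\colon \pi_\alpha\text{ is faithful and homogeneous}\}
\]
is a dense $G_\delta$ in $\Iso(\U_\Q)$, and in particular is nonempty. For any $\alpha\in O$, the homomorphism $\pi_\alpha\colon\Gamma\ast\Lambda\to\Iso(\U_\Q)$ is both injective and has dense image, which is precisely what it means for $\Gamma\ast\Lambda$ to belong to $\HUQ$.

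For a free product of more than two infinite countable factors, say $\Gamma_1\ast\cdots\ast\Gamma_n$ with each $\Gamma_i$ infinite countable, one simply groups the factors as $\Gamma_1\ast(\Gamma_2\ast\cdots\ast\Gamma_n)$; both halves are again infinite countable groups, so the two-factor argument applies directly. I do not foresee any real obstacle at this final step: all the genuine work has already been done in building the strongly free, strongly disconnecting actions on $\U_\Q$ (which required the rescaled Kat\v{e}tov tower, the key delicate point of the unbounded case) and in running the Baire-category argument for $\pi_\alpha$. The corollary is the clean packaging of those two inputs.
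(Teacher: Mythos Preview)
Your proposal is correct and follows exactly the approach implicit in the paper: apply Proposition \ref{p:unboundedfreeaction} to each factor to obtain strongly free actions on $\U_\Q$ that strongly disconnect finite sets, then invoke the preceding theorem (via Lemmas \ref{lem: free prod homogeneous for unbounded Urysohn} and \ref{lem: free prod faithful for unbounded Urysohn}) to conclude that some $\pi_\alpha$ is faithful and homogeneous. The paper does not spell out a separate proof of the corollary, so your write-up is precisely the intended deduction.
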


\section{ An example: the group of finitely supported permutations}
Having spent quite some time building dense subgroups of isometry groups of countable Urysohn spaces, we now are naturally led to the question: how does the class of dense subgroups of $\Iso(\U_S)$ depend on the distance set $S$? We know embarrassingly little about this problem, but we do know that the case where $\Iso(\U_S)=\mathfrak S_\infty$ is different from the others.


Recall that $\mathfrak S_{(\infty)}$ is the (countable) group of all permutations of $\N$ with finite support. By studying its primitive actions, we will prove the 
following result.

\begin{theorem}\label{t:anexample}Let $S$ be a bounded or unbounded distance set.
The group $\mathfrak S_{(\infty)}$ embeds densely in $\Iso(\U_S)$ if and only if $|S|=2$.
\end{theorem}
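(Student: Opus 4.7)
The forward direction is classical: when $|S|=2$, up to rescaling, $\U_S$ is $\N$ with the discrete metric, so $\Iso(\U_S)=\mathfrak{S}_\infty$, and $\mathfrak{S}_{(\infty)}$ is dense in $\mathfrak{S}_\infty$ because every finite partial bijection of $\N$ extends to a finitely supported permutation.

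For the converse, suppose $\mathfrak{S}_{(\infty)}$ embeds densely in $\Iso(\U_S)$, inducing a faithful homogeneous action on $\U_S$. The plan is to show this action must be primitive, then classify the primitive faithful actions of $\mathfrak{S}_{(\infty)}$ to reach a contradiction when $|S|\geq 3$. For primitivity, the $\Iso(\U_S)$-action is itself primitive: any nontrivial invariant equivalence $\sim$ contains a pair at some positive distance $s$, hence by homogeneity all pairs at distance $s$; choosing $s$ large enough (say $s=M$ in the bounded case, and any sufficiently large $s\in S$ otherwise), any two points of $\U_S$ are joined by an $s$-edge path via the one-point extension property, forcing $\sim$ to be total. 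Density of $\mathfrak{S}_{(\infty)}$ then promotes each $\mathfrak{S}_{(\infty)}$-invariant equivalence to an $\Iso(\U_S)$-invariant one by standard pointwise approximation, so the induced action is primitive.

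The key step is to show that, up to equivalence, every primitive faithful action of $\mathfrak{S}_{(\infty)}$ on an infinite countable set is the natural action on $[\N]^k$ (unordered $k$-subsets of $\N$) for some $k\geq 1$; equivalently, the maximal subgroups of infinite index in $\mathfrak{S}_{(\infty)}$ are exactly the setwise stabilizers of finite subsets of $\N$. One would prove this by exploiting the exhaustion $\mathfrak{S}_{(\infty)}=\bigcup_n \mathfrak{S}_n$ and applying the O'Nan--Scott classification of primitive groups at each finite level, then tracking what persists as $n\to\infty$. Granting the classification, the case $k=1$ is immediate: the action is $2$-transitive on $\N$, so any invariant metric takes at most two values and $|S|=2$.

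For $k\geq 2$ one argues that $[\N]^k$ with any $\mathfrak{S}_{(\infty)}$-invariant metric fails the one-point extension property of $\U_S$. In the case $k=2$ for concreteness: the stabilizer of $\{1,2\}$ has three orbits on $[\N]^2$ (by intersection size), giving distances $d_1$ (share one element) and $d_2$ (disjoint); the triangle $\{1,2\},\{2,3\},\{3,4\}$ forces $d_2\leq 2d_1$, so the one-point extension assigning distance $d_1$ to each of the three mutually disjoint pairs $\{1,2\},\{3,4\},\{5,6\}$ is metrically consistent and must be realized in $\U_S$, while no pair in $[\N]^2$ can share exactly one element with three pairwise disjoint pairs (pigeonhole on the two slots of a pair). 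An analogous pigeonhole obstruction rules out $k\geq 3$. The main obstacle is the classification of primitive actions: the maximal subgroup structure of the locally finite group $\mathfrak{S}_{(\infty)}$ is less standardized than in the finite case, and a careful argument is needed to rule out unexpected primitive actions, for instance those arising from stabilizers of partitions of $\N$ into finite blocks. A possible alternative route is to avoid the full classification by arguing directly that for any primitive faithful action of $\mathfrak{S}_{(\infty)}$ on $\U_S$, the stabilizer of a point admits only a combinatorially rigid orbit pattern, incompatible with $\U_S$ once $|S|\geq 3$.
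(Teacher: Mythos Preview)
Your proposed classification of primitive faithful actions of $\mathfrak S_{(\infty)}$ is wrong, and this is the main gap. It is \emph{not} true that the only maximal subgroups of infinite index are setwise stabilizers of finite subsets. There are two further families: the setwise stabilizer of any infinite coinfinite subset $X\subseteq\N$, and the group of all finitary permutations preserving an equivalence relation with all classes of fixed finite cardinality $k\geq 2$. Both are maximal of infinite index and give faithful primitive actions. You flag the partition case as a worry, but never handle it; you do not mention the infinite--coinfinite case at all. Your pigeonhole argument on $[\N]^k$ is fine as far as it goes, but it covers only the first family.

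The paper's route avoids case-by-case metric obstructions entirely. It does carry out the full classification of maximal subgroups of infinite index (all three families), but then, instead of arguing that the associated coset spaces cannot carry a $\U_S$-metric, it identifies the Schlichting completion of $\mathfrak S_{(\infty)}$ with respect to each of these subgroups and checks that in every case $\mathfrak S_{(\infty)}$ sits as a \emph{normal} subgroup of the completion. The proof then concludes with a short general fact: for $|S|\geq 3$, every nonidentity element of $\Iso(\U_S)$ has uncountable conjugacy class, so $\Iso(\U_S)$ admits no nontrivial countable normal subgroup. This normality/conjugacy-class argument treats all three families uniformly and is considerably more robust than your extension-property obstruction, which would have to be redone separately for the action on $\Comm^0_X(\N)$ and for the action on partitions almost equal to a fixed one.
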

Of course when $|S|=2$, $\Iso(\U_S)$ is isomorphic to $\mathfrak S_\infty$ so only one implication above is interesting.

The above theorem shows that the classes of countable groups which are isomorphic to a dense subgroup of $\mathfrak S_\infty$ and, say, the  automorphism group $\text{Aut}(R)$ of the random graph, are not the same; much remains to be investigated. For instance, we do not know of a countable dense subgroup of $\text{Aut}(R)$ which is not isomorphic to a dense subgroup of $\mathfrak S_\infty$, though it seems likely that such groups exist.

Our approach is fairly elementary: assume that $\mathfrak S_{(\infty)}$ acts faithfully and homogeneously on $\U_S$. Pick $x \in \U_S$, and consider the associated stabilizer subgroup
$$\Delta =\{\gamma \in \mathfrak S_{(\infty)} \colon  \gamma x = x \}. $$
Since $\Iso(\U_S)$ acts transitively on $\U_S$, we can identify $\mathfrak S_{(\infty)} /\Delta$ with $\U_S$, and the action of $\mathfrak S_{(\infty)}$ on $\U_S$ is simply the left-translation action of $\mathfrak S_{(\infty)}$ on $\mathfrak S_{(\infty)} /\Delta$. Moreover, since the action is faithful the subgroup $\Delta$ is core-free and, since the action is homogeneous, the closure of $\mathfrak S_{(\infty)}$ in the symmetric group of $\mathfrak S_{(\infty)} /\Delta$ is isomorphic to $\Iso(\U_S)$.  

Conversely, if there is a core-free subgroup $\Delta$ such that the closure of $\mathfrak S_{(\infty)}$ in the symmetric group of $\mathfrak S_{(\infty)} /\Delta$ is isomorphic to $\Iso(\U_S)$, then obviously $\mathfrak S_{(\infty)}$ is isomorphic to a dense subgroup of $\U_S$. Thus, we focus our attention towards understanding what kind of subgroups $\Delta$ can arise as point stabilizers.

\subsection{Classification of point stabilizers for homogeneous actions}

We recall some facts and definitions from permutation group theory.

\begin{definition}
Let $X$ be a countable set, and $G$ be a group acting transitively on $X$. A \emph{block} for this action is a nonempty set $A \subseteq X$ such that for all $g \in G$ one has either $gA=A$ or $gA \cap A = \emptyset$. A block is \emph{trivial} if it is either a singleton or the whole set $X$.

The action of $G$ on $X$ is \emph{primitive} if all blocks for this action are trivial (equivalently, there is no nontrivial $G$-invariant equivalence relation on $X$).
\end{definition}

This property is particularly relevant to us because of the following two facts (the first of which is a standard fact in permutation group theory, while the second is probably well-known. We include the short proofs for the reader's convenience).

\begin{lemma}\label{prop: trans prim implies max}
Assume that $G$ is a group that acts transitively on $X$. Then the action is primitive if and only if the stabilizer of some (any) $x \in X$ is a maximal proper subgroup.
\end{lemma}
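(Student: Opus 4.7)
The plan is to establish the standard Galois-type correspondence between blocks containing a fixed basepoint and subgroups of $G$ containing the stabilizer of that point, and then read off primitivity versus maximality from this correspondence.

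Fix $x \in X$ and let $H = \Stab_G(x)$. First I would define two maps:
given a subgroup $K$ with $H \leq K \leq G$, set $\Phi(K) := Kx \subseteq X$; given a block $A \subseteq X$ with $x \in A$, set $\Psi(A) := \{g \in G : gA = A\}$ (the setwise stabilizer of $A$). The goal is to show these are mutually inverse bijections between the poset of intermediate subgroups $H \leq K \leq G$ and the poset (under inclusion) of blocks containing $x$.

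The key verifications are:
\begin{enumerate}
\item $\Phi(K)$ is a block containing $x$. Indeed $x = 1 \cdot x \in Kx$, and if $g(Kx) \cap Kx \neq \emptyset$, pick $k_1, k_2 \in K$ with $g k_1 x = k_2 x$, so $k_2^{-1} g k_1 \in H \leq K$, whence $g \in K$ and $g(Kx) = Kx$.
\item $\Psi(A)$ is a subgroup containing $H$: it is clearly a subgroup, and any $h \in H$ satisfies $hx = x \in A \cap hA$, so by the block property $hA = A$.
\item $\Psi(A) \cdot x = A$: the inclusion $\subseteq$ is immediate; conversely, for $a \in A$, transitivity yields $g \in G$ with $gx = a \in A \cap gA$, so $gA = A$, i.e.\ $g \in \Psi(A)$ and $a \in \Psi(A)x$.
\item $\Psi(\Phi(K)) = K$: one inclusion follows from (3) applied to $A = Kx$; for the other, $K \subseteq \Psi(Kx)$ is obvious.
\end{enumerate}
Thus $\Phi$ and $\Psi$ are inverse order-preserving bijections. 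Under them the trivial block $\{x\}$ corresponds to $H$ and the trivial block $X$ corresponds to $G$. Hence primitivity (no blocks strictly between $\{x\}$ and $X$) is equivalent to there being no subgroup strictly between $H$ and $G$, i.e.\ to $H$ being a maximal proper subgroup.

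Finally, the ``some/any'' part: for any two points $x, y \in X$, transitivity gives $g \in G$ with $gx = y$, and $\Stab_G(y) = g\, \Stab_G(x)\, g^{-1}$; maximality is invariant under conjugation, so the property ``the stabilizer is maximal'' does not depend on the chosen point. The whole argument is purely formal, so I do not anticipate any real obstacle; the only thing to be careful about is not to confuse the block $A$ with its setwise stabilizer, and to use transitivity at exactly the right moment (step 3) to guarantee $\Psi(A)x = A$ rather than a strictly smaller set.
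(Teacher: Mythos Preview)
Your proof is correct and follows essentially the same approach as the paper: both identify $X$ with $G/H$ and pass between intermediate subgroups $H\leq K\leq G$ and blocks containing the basepoint via $K\mapsto Kx$ and $A\mapsto\{g:gA=A\}$. The paper is terser, only checking the two implications needed, while you set up the full order-isomorphism; but the content is the same.
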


\begin{proof}
Given $x \in X$, denote by $H$ its stabilizer. We can identify the action of $G$ on $X$ with the regular action of $G$ on $G/H$. If there exists a subgroup $K$ of $G$ such that $H \subsetneq K \subsetneq G$, then the $K$-cosets form a family of nontrivial blocks for this action. Conversely, assume that $A \subseteq G/H$ is a block for this action, containing $H$ (seen as an element of $G/H$). By definition of a block, we see that 
$$K:=\{g \in G \colon  gH \in A\} =\{g \in G \colon  gA=A\}$$ 
so $K$ is a subgroup of $G$, and if $A$ is nontrivial then $H \subsetneq K \subsetneq G$.
\end{proof}

\begin{lemma}\label{prop: Iso acts transitive primitive}
  The action of $\Iso(\U_S)$ on $\U_S$ is transitive and primitive.
\end{lemma}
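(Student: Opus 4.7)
My approach is as follows. Transitivity of $\Iso(\US)$ on $\US$ is immediate from the ultrahomogeneity of $\US$: given $x, y \in \US$, the one-point partial isometry $x \mapsto y$ extends to a global isometry. For primitivity, the plan is to show that $\US$ admits no nontrivial $\Iso(\US)$-invariant equivalence relation.

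So suppose $R$ is such an equivalence relation. The first step is to observe that, again by ultrahomogeneity, any two pairs at the same distance are in the same orbit of the diagonal $\Iso(\US)$-action, so whether $x R y$ holds depends only on $d(x, y)$. Hence there is a set $T \subseteq S$ with $0 \in T$ such that $x R y$ if and only if $d(x, y) \in T$. The next step is to translate the transitivity of $R$, together with the extension property, into the closure property: if $s_1, s_2 \in T$ and $s_3 \in S$ satisfies $|s_1 - s_2| \leq s_3 \leq s_1 + s_2$, then $s_3 \in T$. Indeed, given $x, z \in \US$ with $d(x, z) = s_3$, these triangle inequalities are precisely the Kat\v{e}tov conditions for the one-point extension sending $x$ to $s_1$ and $z$ to $s_2$, so we can realize some $y \in \US$ with $d(x, y) = s_1$ and $d(y, z) = s_2$, and transitivity of $R$ then forces $s_3 \in T$.

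It remains to show that any such $T$ must be either $\{0\}$ or all of $S$. Assume $T \neq \{0\}$ and pick $r > 0$ in $T$. The closure applied with $s_1 = s_2 = r$ gives $S \cap [0, 2r] \subseteq T$. In the unbounded case, $S$ being an additive subsemigroup yields $nr \in S$ for all $n \geq 1$, and a straightforward induction using the closure then gives $nr \in T$ for every $n$, whence $T \supseteq S \cap [0, 2nr]$ for every $n$ and thus $T = S$. In the bounded case, the axiom $\min(s+t, M) \in S$ allows one to show by induction that $\min(2^k r, M) \in T$ for every $k \geq 0$; for $k$ large enough this value equals $M$, and a final closure application with $s_1 = s_2 = M$ yields $T \supseteq S \cap [0, 2M] = S$, as wanted.

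The main obstacle is really the bookkeeping for this final induction, ensuring that the intermediate values we invoke truly lie in $S$; this is exactly what the structural axiom on a bounded or unbounded distance set is designed for, and the arguments in the two cases run in parallel.
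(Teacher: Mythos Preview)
Your proof is correct and follows essentially the same route as the paper's: both reduce primitivity, via ultrahomogeneity, to showing that the set of distances occurring within a nontrivial block must be all of $S$, and both establish this by the same doubling/closure argument. The paper phrases the argument directly in terms of blocks rather than your set $T$ and leaves the final iteration (``from this we deduce that $A = \U_S$'') to the reader, whereas you spell it out separately in the bounded and unbounded cases.
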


\begin{proof}
Transitivity of the action is obvious. As for primitivity, assume that $A$ is a block for this action, with two elements $x,y$ such that $d(x,y)=r >0$. For any $z$ such that $d(y,z)=r$, there exists an isometry $\alpha$ of $\U_S$ such that $\alpha(y)=y$ and $\alpha(x)=z$. Since $A$ is a block, $\alpha(y)=y$ implies $\alpha(A)=A$ so $z$ belongs to $A$. Now, note that for any $s \in S \cap [0,2r]$ there exists $z \in \U_S$ with $d(x,z)=s$ and $d(y,z)=r$, so $A$ has an element $z$ such that $d(x,z)=s$. Switching the roles of $x$ and $y$, we conclude that $A$ contains all $z$ such that $d(x,z) \in [0,2r]$. From this we deduce that $A=\U_S$.
\end{proof}

It is straightforward to check that transitivity and primitivity are inherited by dense subgroups, so we have the following result.

\begin{proposition}\label{prop: dense in iso implies maximal proper}
If $\Gamma$ is a dense subgroup of $\Iso(\U_S)$, then for any $x \in \U_S$ its stabilizer $\Gamma_x$ is a maximal proper subgroup of $\Gamma$.
\end{proposition}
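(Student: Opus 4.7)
The plan is to combine the two preceding lemmas: since Lemma~\ref{prop: trans prim implies max} characterises maximal stabilisers as those of transitive primitive actions, and Lemma~\ref{prop: Iso acts transitive primitive} tells us that $\Iso(\U_S)$ acts transitively and primitively on $\U_S$, all that is needed is to transfer these two properties from $\Iso(\U_S)$ down to its dense subgroup $\Gamma$.

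First I would check transitivity. The topology on $\Iso(\U_S)$ is pointwise convergence with $\U_S$ discrete, so for any $x,y \in \U_S$ the set $\{g \in \Iso(\U_S) \colon gx = y\}$ is open, and it is nonempty by transitivity of $\Iso(\U_S)\curvearrowright \U_S$ (Lemma~\ref{prop: Iso acts transitive primitive}). Density of $\Gamma$ then yields $\gamma \in \Gamma$ with $\gamma x = y$.

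Next I would prove primitivity via the following transfer principle: every block $A$ for the $\Gamma$-action on $\U_S$ is in fact a block for the $\Iso(\U_S)$-action, and is therefore trivial by Lemma~\ref{prop: Iso acts transitive primitive}. To see this, fix $\alpha \in \Iso(\U_S)$ with $\alpha A \cap A \neq \emptyset$, pick $a \in A$ with $\alpha a \in A$, and pick any $b \in A$. The basic open neighbourhood $\{g \colon g a = \alpha a,\ g b = \alpha b\}$ of $\alpha$ meets $\Gamma$ by density, giving $\gamma \in \Gamma$ with $\gamma a = \alpha a$ and $\gamma b = \alpha b$. Then $\gamma a \in A \cap \gamma A$, so the block condition for $\Gamma$ forces $\gamma A = A$, whence $\alpha b = \gamma b \in A$. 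Thus $\alpha A \subseteq A$; applying the same argument to $\alpha^{-1}$ (note $\alpha^{-1}a = a'$ for some $a'\in A$ because $a' := \alpha^{-1}(\alpha a)\in A$ gives a witness of $\alpha^{-1}A \cap A \neq \emptyset$) gives the reverse inclusion, so $\alpha A = A$ and $A$ is an $\Iso(\U_S)$-block.

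Finally I would invoke Lemma~\ref{prop: trans prim implies max}: $\Gamma$ acts transitively and primitively on $\U_S$, so the stabiliser $\Gamma_x$ of any point $x \in \U_S$ is a maximal proper subgroup of $\Gamma$. There is no serious obstacle here; the only subtle point is the transfer of the block property under density, but this reduces to checking a single basic open neighbourhood, as done above.
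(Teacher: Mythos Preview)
Your proposal is correct and follows exactly the approach the paper indicates: the paper simply states that ``transitivity and primitivity are inherited by dense subgroups'' and invokes the two preceding lemmas, and you have supplied the straightforward verification of that inheritance. The only minor wrinkle is the slightly garbled parenthetical in the reverse inclusion step; the clean witness that $\alpha^{-1}A\cap A\neq\emptyset$ is the point $\alpha a\in A$, since $\alpha^{-1}(\alpha a)=a\in A$.
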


In particular, if $\mathfrak S_{(\infty)}$ acts faithfully homogeneously and $\Delta$ is the stabilizer of a point, then $\Delta$ is a maximal proper subgroup of $\mathfrak S_{(\infty)}$. We now turn our attention towards understanding these maximal subgroups; probably the description we obtain is well-known, but we do not know of a reference. 

\begin{definition}
Assume that $H$ is a group that acts transitively and not primitively on a set $X$.
We say that $H$ is \emph{almost primitive} if there exists a maximal nontrivial block, and \emph{totally imprimitive} otherwise.
\end{definition}

In the classification of maximal subgroups of $\mathfrak S_{(\infty)}$, it is natural to distinguish groups with finite/infinite biindex, a notion that we introduce now.

\begin{definition}
The \textbf{biindex} of a subgroup $H$ of a group $G$ is the minimal cardinality of a set $F$ such that $HFH=G$, that is, the number of double cosets. 
\end{definition}

It is straightforward to check that the biindex of $H$ corresponds to the number of orbits for the $H$-action on $G/H$. 

Maximal subgroups of $\mathfrak S_{(\infty)}$ which do not act transitively on $\N$ are easily described.

\begin{lemma}\label{lemma: some maximal subgroups}
Let $X$ be a subset of $\N$ different from $\emptyset$ and $\N$. Then the setwise stabilizer of $X$ is a maximal proper subgroup of $\mathfrak S_{(\infty)}$.
\end{lemma}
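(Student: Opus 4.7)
My plan is to show that $H := \{\sigma \in \mathfrak S_{(\infty)} : \sigma(X) = X\}$ is a maximal proper subgroup by establishing that for every $\sigma \in \mathfrak S_{(\infty)} \setminus H$, the subgroup $\langle H, \sigma \rangle$ contains every transposition of $\N$, and hence equals $\mathfrak S_{(\infty)}$. First I note that $H$ is a proper subgroup (any transposition $(ab)$ with $a \in X$, $b \in \N \setminus X$ lies outside it) and that since every element of $H$ preserves both $X$ and $\N \setminus X$ setwise, $H$ visibly contains every transposition supported inside $X$ and every transposition supported inside $\N \setminus X$.

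The heart of the argument will be to produce a single ``mixed'' transposition $(pq)$ with $p \in X$ and $q \in \N \setminus X$ inside $\langle H, \sigma \rangle$. Once such a transposition is obtained, I can conjugate it by a product of a transposition in $X$ and a transposition in $\N \setminus X$ (both elements of $H$) to move $p$ to any desired element of $X$ and $q$ to any desired element of $\N \setminus X$, thereby obtaining every mixed transposition; together with the transpositions already in $H$, this yields all transpositions of $\N$ and hence generates $\mathfrak S_{(\infty)}$.

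To produce the mixed transposition, I will swap $X$ and $\N \setminus X$ if necessary so as to assume $|X| \geq 2$, and pick $a \in X$ with $\sigma(a) =: b \notin X$ (such an $a$ exists since $\sigma \notin H$). For any $c \in X \setminus \{a\}$, the element $(ac)$ belongs to $H$ and so $\sigma(ac)\sigma^{-1} = (b\,\sigma(c))$ lies in $\langle H, \sigma \rangle$; if we can choose $c$ such that $\sigma(c) \in X$, we are done. The key observation is that if $X$ is infinite, $\sigma$ being finitely supported forces $\sigma(c) = c \in X$ for all but finitely many $c \in X$, so such a $c$ exists; while if $X$ is finite, then after the initial swap with $\N \setminus X$ we can assume $X$ is infinite, so this covers all cases (one must also check separately the degenerate case where the set one picks has only one element, e.g.\ $|X|=1$, which is handled by taking $c = \sigma^{-1}(a)$ and observing $\sigma(c\,d)\sigma^{-1} = (a\,\sigma(d))$ for any $d \neq a, \sigma^{-1}(a)$).

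The only real subtlety is the case analysis on the sizes of $X$ and $\N \setminus X$; the finite support of $\sigma$ is what makes the argument go through, and it is exploited precisely by the pigeonhole-style remark that $\sigma$ cannot move every element of an infinite subset of $\N$. Once this combinatorial point is isolated, the rest of the proof is a routine conjugation argument.
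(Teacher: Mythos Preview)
Your proof is correct and follows essentially the same approach as the paper: both reduce to the case where $X$ is infinite by swapping $X$ with its complement, pick $a\in X$ with $\sigma(a)\notin X$, use the finite support of $\sigma$ to find some $c\in X$ fixed by $\sigma$, and then conjugate $(a\ c)\in H$ by $\sigma$ to obtain a mixed transposition. The only cosmetic difference is that the paper packages the final step via a graph-connectivity criterion (the transpositions $(x\ y)$ in a subgroup generate $\mathfrak S_{(\infty)}$ iff the corresponding graph is connected), whereas you argue directly that one mixed transposition yields all of them by conjugation inside $H$; your parenthetical about $|X|=1$ is unnecessary since the swap already ensures $X$ is infinite.
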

\begin{proof}
Since the setwise stabilizer of $X$ is equal to the setwise stabilizer of its complement, we may as well assume that $X$ is infinite.

Given $x_0, x_1\in \N$ the transposition $(x_0\enskip x_1)$ is the permutation $\sigma$ defined by $\sigma(x_0)=x_1$, $\sigma(x_1)=x_0$ and $\sigma(x)=x$ for all $x\not\in\{x_0,x_1\}$. 
We use the fact that if $G\subseteq\N\times\N$ is a graph on $\N$, then the set of transpositions of the form $(x\enskip y)$ with $(x,y)\in G$ generates $\mathfrak S_{(\infty)}$ if and only if the graph $G$ is connected.

Let $\Delta$ be the setwise stabilizer of $X$, and let us show $\Delta$ is a maximal subgroup of $\mathfrak S_{(\infty)}$. 
Observe that the graph of $(x,y)$ such that $(x\enskip y)\in \Delta$ contains both the complete graph on $X$ and the complete graph on $\N\setminus X$.

Now let $g\in \mathfrak S_{(\infty)}\setminus\Delta$. Then we find $x\in X$ such that $g(x)\not\in X$. 
Since $g$ has finite support and $X$ is infinite, we find $y\in X$ with $x\neq y$ and $g(y)=y$. Now $g(x\enskip y)g\inv=(g(x)\enskip y)$. We conclude that the graph of $(x,y)$ such that $(x\enskip y)\in \la \Delta, g\ra$ is connected, which shows that $\la \Delta, g\ra=\mathfrak S_{(\infty)}$ as wanted. 
\end{proof}

Using a famous theorem of Wielandt, it is also not difficult to identify maximal proper subgroups of $\mathfrak S_{(\infty)}$ which act transitively on $\N$.

\begin{lemma}\label{l:classification}
Let $\Delta$ be a maximal proper subgroup of $\mathfrak S_{(\infty)}$, and assume that $\Delta$ has infinite index and acts transitively on $\N$. Then there is an equivalence relation $E$ on $\N$, all of whose classes have the same cardinality $k \ge 2$ (possibly infinite), and such that $\Delta$ consists of all elements of $\mathfrak S_{(\infty)}$ which preserve $E$. These groups are indeed maximal, and have infinite biindex.
\end{lemma}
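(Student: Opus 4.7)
The plan is to apply Wielandt's classical theorem that an infinite primitive permutation group which contains a nontrivial finitary permutation must contain the finitary alternating group $\mathfrak A_{(\infty)}$; this is the main external input.

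First, I would rule out primitivity of $\Delta$ on $\N$: if $\Delta$ were primitive, then Wielandt's theorem would force $\mathfrak A_{(\infty)}\subseteq\Delta$, hence $[\mathfrak S_{(\infty)}:\Delta]\leq 2$, contradicting the infinite-index hypothesis. So $\Delta$ preserves some nontrivial equivalence relation $E$ on $\N$. Letting $\Delta_E\leq\mathfrak S_{(\infty)}$ denote the full stabilizer of $E$, we have $\Delta\subseteq\Delta_E\subsetneq\mathfrak S_{(\infty)}$ (the latter because, $E$ being nontrivial, one finds a transposition swapping an element of a nonsingleton class with an element of a different class which does not preserve $E$), and maximality of $\Delta$ then yields $\Delta=\Delta_E$. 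Transitivity of $\Delta$ on $\N$ implies that all $E$-classes share a common cardinality $k\geq 2$.

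The main obstacle is to show that $k$ is finite. The argument runs as follows: if $k$ were infinite, any finitary $\sigma\in\Delta_E$ would have to fix each $E$-class setwise, because a finitary bijection cannot send an infinite class to a distinct, disjoint infinite class. Then $\Delta_E$ would act intransitively on $\N$ (there being at least two classes since $E$ is nontrivial), contradicting the transitivity of $\Delta=\Delta_E$. Hence $k$ is finite.

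For the converse (maximality of $\Delta_E$), fix $\sigma\in\mathfrak S_{(\infty)}\setminus\Delta_E$ and pick $x,y$ in a common class with $\sigma(x),\sigma(y)$ in distinct classes, so that $\sigma(x,y)\sigma^{-1}=(\sigma(x),\sigma(y))$ is a cross-class transposition lying in $\langle\Delta_E,\sigma\rangle$. The identity $(x,b)(y,b)(x,b)=(x,y)$, combined with the fact that $\Delta_E$ acts transitively on ordered pairs of distinct classes (since $k$ is finite, any two classes can be exchanged by a finitary permutation) and transitively within each class, allows one to produce every transposition of $\mathfrak S_{(\infty)}$ inside $\langle\Delta_E,\sigma\rangle$, so $\langle\Delta_E,\sigma\rangle=\mathfrak S_{(\infty)}$.

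For the infinite biindex, I would exhibit a $\Delta_E$-invariant on $\mathfrak S_{(\infty)}/\Delta_E$ taking infinitely many values. Identifying the coset $g\Delta_E$ with the equivalence relation $g\cdot E$ turns the left $\Delta_E$-action into the natural pushforward action, and the quantity
\[
\mathrm{inv}(E'):=\bigl|\{x\in\N\colon \exists y,\ (xEy)\neq(xE'y)\}\bigr|
\]
is $\Delta_E$-invariant (a routine check using that elements of $\Delta_E$ preserve $E$). Intertwining $n$ disjoint pairs of $E$-classes by a finitary permutation $\sigma_n$ yields equivalence relations $E_n:=\sigma_n\cdot E$ with $\mathrm{inv}(E_n)$ strictly increasing in $n$, producing infinitely many $\Delta_E$-orbits and hence infinite biindex.
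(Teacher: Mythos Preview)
Your proof is correct and follows the same overall strategy as the paper: rule out primitivity via Wielandt's theorem, identify $\Delta$ with the full stabilizer $\Delta_E$ of an invariant equivalence relation, verify maximality by generating all transpositions, and exhibit a double-coset invariant for the biindex claim.

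The one organizational difference worth noting is that the paper splits the imprimitive case into \emph{almost primitive} (a maximal nontrivial block exists, giving $E$) and \emph{totally imprimitive} (an increasing exhausting chain of blocks, which forces $\Delta=\mathfrak S_{(\infty)}$). You bypass this trichotomy: you take \emph{any} nontrivial $\Delta$-invariant $E$, observe $\Delta\subseteq\Delta_E\subsetneq\mathfrak S_{(\infty)}$, and conclude $\Delta=\Delta_E$ directly from maximality. You then handle the finiteness of $k$ by the clean observation that a finitary permutation cannot map an infinite class onto a disjoint infinite class, so infinite classes would make $\Delta_E$ intransitive. This is a genuine streamlining: the paper's proof does not explicitly argue $|A|<\infty$ in the almost primitive case (it is implicit, via the same transitivity obstruction), and your route makes this step transparent while avoiding the extra case split. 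The maximality and biindex arguments are essentially the same in both---your $\mathrm{inv}(E')$ counts points where $E$ and $E'$ disagree, while the paper counts perturbed blocks; both are $\Delta_E$-bi-invariant and unbounded.
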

\begin{proof}
Suppose otherwise.
If $\Delta$ is primitive, $\Delta$ must be either $\mathfrak S_{(\infty)}$ or the group of even permutations by Wielandt's theorem \cite[Thm. 3.3D]{zbMATH00894528}, contradicting the assumption on the index of $\Delta$.

If $\Delta$ is almost primitive, let $A$ be a maximal nontrivial block for the action $\Delta$ on $\N$. Note that all the maximal blocks are of the form $\delta A$, $\delta \in \Delta$, and denote by $E$ the equivalence relation  defined by 
$$\forall i,j \in \N \quad (i E j )\Leftrightarrow (\exists \delta \in \Delta \  \delta i \in A \text{ and } \delta j \in A ). $$
Then all the $E$-equivalence classes have the same cardinality $|A| \ge 2$. By maximality, $\Delta$ must coincide with the group of all elements of $\mathfrak S_{(\infty)}$ which are automorphisms of $E$. 

Say that a block $\delta A$ is perturbed by $\gamma \in \mathfrak S_{(\infty)}$ if $\gamma \delta A$ is not equal to some $\delta'A, \delta' \in \Delta$. Let $N(\gamma)$ be the (finite) number of blocks $\delta A$ which are perturbed by $\gamma$. Since $\Delta$ respects the equivalence relation $E$, any element of $\Delta \gamma \Delta$ must perturb as many blocks as $\gamma$; there are elements of $\mathfrak S_{(\infty)}$ which perturb an arbitrary number of blocks, so the biindex of $\Delta$ in $\mathfrak S_{(\infty)}$ is infinite.

If $\Delta$ is totally imprimitive, we get an increasing sequence of blocks $A_1 \subseteq A_2 \subseteq \ldots$ whose union is equal to $\N$ . By maximality, each element with support  in some $A_i$ must belong to $\Delta$, so $\Delta=\mathfrak S_{(\infty)}$, a contradiction (i.e.~there is no proper, maximal,transitive, totally imprimitive subgroup of $\mathfrak S_{(\infty)}$).

It remains to prove that groups of finitary permutations which preserve equivalence relations whose classes have fixed cardinality $k\geq 2$ ($k$ can be equal to $\aleph_0$ here) are indeed maximal. Let $E$ be an equivalence relation all of whose classes have fixed cardinality $k\geq 2$, let $\Delta$ be its automorphism group. Pick $\sigma \in \mathfrak S_{(\infty)}\setminus \Delta$ and
consider the graph $G$ of $(x,y)$ such that $(x\enskip y)\in \langle \Delta , \sigma \rangle$. As in the proof of the previous lemma, it suffices to show that this graph is connected. First, note that each $E$-class is contained in a connected component. 
Since $\sigma \not \in \Delta$, there exist $x,y\in\N$ with $(x,y)\in E$ such that $(\sigma (x),\sigma (y))\not\in E$. Now observe that for every $z\in\N\setminus [\sigma(x)]_E$ there is $\tau\in \Delta$ such that $\tau(\sigma (y))=z$ and $\tau(\sigma(x))=\sigma(x)$. Conjugating the transposition
$(x\enskip y)$ by $\tau\sigma$, we conclude that $(\sigma(x),z)\in G$, which proves that $G$ is connected as wanted. 
\end{proof}

We can now list the maximal proper subgroups of $\mathfrak S_{(\infty)}$; from this description we will then be able to understand the corresponding Schlichting completions.

\begin{theorem}\label{thm: classification maximal proper infinite index}
Assume that $\Delta$ is a maximal proper subgroup of $\mathfrak S_{(\infty)}$, of infinite index. Then, either:
\begin{enumerate}
\item \label{eq: finite biindex transitive}
$\Delta$ has finite biindex $k+1$, in which case $\Delta$ is the setwise stabilizer of a finite set of integers with cardinality $k$. 

\item \label{eq:infinite biindex not transitive}
$\Delta$ has infinite biindex and does not act transitively; then it is the setwise stabilizer of an infinite, coinfinite set of integers.

\item \label{eq:infinite biindex transitive}
$\Delta$ has infinite biindex, and acts transitively; then there exists an equivalence relation $E$, all of whose classes have the same (possibly infinite) cardinality $k \ge 2$, such that $\Delta$ consists of the elements of $\mathfrak S_{(\infty)}$ which are automorphisms of $E$.  
\end{enumerate}
All the groups described above are indeed maximal.
\end{theorem}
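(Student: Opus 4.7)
The plan is to leverage the two preceding lemmas: Lemma \ref{lemma: some maximal subgroups} already shows that the setwise stabilizers appearing in cases (1) and (2) are maximal, while Lemma \ref{l:classification} handles the entire transitive case (3), including both the classification and the maximality statement. So what remains is to show that any non-transitive maximal proper subgroup $\Delta$ of infinite index must be the setwise stabilizer of some nonempty proper subset of $\N$, and to compute biindices in each situation in order to split cases (1) and (2) and verify that the transitive case always yields infinite biindex.

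For the first step I would pick any $\Delta$-orbit $X \subseteq \N$; by non-transitivity $X$ is nonempty and proper, so its setwise stabilizer $\Sigma_X$ is a proper subgroup of $\mathfrak S_{(\infty)}$ containing $\Delta$, hence $\Delta = \Sigma_X$ by maximality. Consequently $\Delta$ has exactly two orbits, $X$ and $\N \setminus X$. Cases (1) and (2) are then distinguished by whether one of these orbits is finite or whether both are infinite.

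The biindex computation proceeds via the standard identification of double cosets $\Delta \sigma \Delta$ with $\Delta$-orbits (under the left action) on $\{\sigma(X) : \sigma \in \mathfrak S_{(\infty)}\}$. If $|X| = k < \infty$, the sets $\sigma(X)$ range over all size-$k$ subsets of $\N$, and two such are $\Delta$-equivalent precisely when they share the same intersection cardinality with $X$, giving biindex $k+1$ (case (1)). If both $X$ and $\N \setminus X$ are infinite, write $F$ for the finite support of $\sigma$ and set $A := F \cap X$, $B := F \setminus X$; since $\sigma$ restricts to a bijection of $F$ and is the identity off $F$, a short count gives $X \setminus \sigma(X) = A \cap \sigma(B)$ and $\sigma(X) \setminus X = B \cap \sigma(A)$, which have equal cardinality $n$, so that $|\sigma(X) \Delta X| = 2n$; conversely two subsets $\sigma(X), \sigma'(X)$ sharing the same invariant $n$ can be matched by an element of $\Delta$ that permutes within $X$ and within $\N \setminus X$ via two finitary permutations. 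Hence there is exactly one double coset per $n \geq 0$ and the biindex is countably infinite (case (2)). For case (3), the proof of Lemma \ref{l:classification} already yields infinite biindex by counting the maximal blocks perturbed by a given element.

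The main subtlety lies in the parity argument of case (2) forcing $|\sigma(X) \Delta X|$ to be even; once this is in place, the matching argument inside each $\Delta$-orbit of subsets is routine finitary combinatorics, and the theorem follows from the above bookkeeping combined with the maximality assertions already proved in Lemmas \ref{lemma: some maximal subgroups} and \ref{l:classification}.
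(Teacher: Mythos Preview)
Your proposal is correct and follows essentially the same route as the paper: split into the non-transitive and transitive cases, invoke Lemma~\ref{lemma: some maximal subgroups} and Lemma~\ref{l:classification} for maximality and for the transitive classification, and in the non-transitive case identify $\Delta$ with the setwise stabilizer of an orbit and compute the biindex via the $\Delta$-orbits on $\{\sigma(X):\sigma\in\mathfrak S_{(\infty)}\}$. The only difference is one of detail: in case~(2) the paper merely observes that $\Delta$ is a point stabilizer for the action on infinite coinfinite subsets and asserts infinite biindex, whereas you carry out the explicit computation showing that the double cosets are parametrized by $n=|X\setminus\sigma(X)|=|\sigma(X)\setminus X|\in\N$.
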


\begin{proof}
Assume first that $\Delta$ does not act transitively on $\N$. By maximality, we must have exactly two disjoint $\Delta$-orbits, say $X$ and $Y$; at least one of them (say $Y$) is infinite. By maximality, $\Delta$ is the setwise stabilizer of $X$, which is indeed maximal by Lemma \ref{lemma: some maximal subgroups}. 

If $X$ is infinite then $\Delta$ is the stabilizer of a point for the action of $\mathfrak S_{(\infty)}$ on the set $\Omega$ of all infinite and co-infinite subsets of $\N$. It follows that $\Delta$ has infinite biindex.

If $\Delta$ has finite biindex $k+1$, then $X$ is finite of cardinality $K\in\N$ and $\Delta$ is the stabilizer of a point for the (transitive) action of $\mathfrak S_{(\infty)}$ on the set $[\N]_K$ of all subsets of $\N$ whose cardinality is equal to $K$. Then the biindex of $\Delta$ is equal to $K+1$ (indeed the $\Delta$-orbit of $Y\in [\N]_K$ is determined by the cardinality of its intersection with $X$) and we conclude $k=K$ as wanted. 

The case where $\Delta$ acts transitively has already been described in the previous lemma.
\end{proof}

\subsection{Associated Schlichting completions}

As explained at the beginning of this section, we want to understand the closure of $\mathfrak S_{(\infty)}$ in  the symmetric group of $\mathfrak S_{(\infty)})/\Delta$ when $\Delta$ is one of the subgroups from the above proposition, in other words we want to understand possible \emph{Schlichting completions} of $\mathfrak S_{(\infty)}$ with respect to maximal subgroups (see the definition below). It is not necessary to go into that much detail if one simply wants to prove that $\mathfrak S_{(\infty)}$ is not isomorphic to a dense subgroup of a nontrivial Urysohn space, but it seems interesting to work out this example in complete detail.

\begin{definition}
Let $\Delta$ be a subgroup of  a countable group $\Gamma$. The \textbf{Schlichting completion} of $\Gamma$ with respect to $\Delta$ is the closure of $\Gamma$ in the symmetric group of $\Gamma /\Delta$.
\end{definition}

This notion was introduced by Schlichting when $\Delta$ was \emph{commensurated} in $\Gamma$, and he proved that the completion is then a locally compact group \cite{MR583752}.
 
Note that the Schlichting completion of a group $\Gamma$ comes with a canonical map from $\Gamma$  with dense image. This map will always be clear from the context, so we will not explicitely write it down.

We will now define concrete representations of the Schlichting completions of $\mathfrak S_{(\infty)}$ with respect to the subgroups that we found in the above section. We will often make use of the following fact without further mention: if $G$ and $H$ are  Polish groups and $\pi\colon G\to H$ is an embedding of topological groups, then $\pi$ has closed image (see \cite[Prop. 2.2.1]{MR2455198}).

Let us start with item \eqref{eq: finite biindex transitive} from Theorem \ref{thm: classification maximal proper infinite index}.

\begin{proposition}\label{prop: completion maximal finite biindex}
Let $\Delta$ be a maximal proper subgroup of $\mathfrak S_{(\infty)}$, and assume that $\Delta$ has infinite index and biindex $k+1$. Then the Schlichting completion of $\mathfrak S_{(\infty)}$ with respect ot $\Delta$ is $\mathfrak S_\infty$. In particular, $\mathfrak S_{(\infty)}$ is a normal subgroup of its Schlichting completion with respect to $\Delta$.
\end{proposition}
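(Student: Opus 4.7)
The plan is to identify the coset space $\mathfrak S_{(\infty)}/\Delta$ with $[\N]_k$, the set of $k$-element subsets of $\N$, realize the Schlichting completion as the closure of $\mathfrak S_{(\infty)}$ inside $\sym([\N]_k)$, and finally identify this closure with $\mathfrak S_\infty$ acting naturally on $[\N]_k$.

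By Theorem \ref{thm: classification maximal proper infinite index}(1), $\Delta$ is the setwise stabilizer of a $k$-subset $X\subset\N$, with $k\geq 1$ since $\Delta$ has infinite index. Since $\mathfrak S_{(\infty)}$ acts transitively on $[\N]_k$, the left coset space $\mathfrak S_{(\infty)}/\Delta$ identifies equivariantly with $[\N]_k$, so the Schlichting completion is the closure of $\mathfrak S_{(\infty)}$ in $\sym([\N]_k)$ endowed with the topology of pointwise convergence. The natural action of $\mathfrak S_\infty$ on $\N$ extends to a faithful action on $[\N]_k$, giving a group homomorphism $\iota\colon\mathfrak S_\infty\to\sym([\N]_k)$ whose image clearly contains $\mathfrak S_{(\infty)}$.

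The key step, which I expect to be the main technical point, is to show that $\iota$ is a \emph{topological} group embedding. Continuity is immediate from the definitions. For the converse, given $x\in\N$ one can choose two $k$-subsets $A_1,A_2$ containing $x$ with $A_1\cap A_2=\{x\}$ (possible because $\N$ is infinite and $k\geq 1$); if $(\sigma_n)$ is a net in $\mathfrak S_\infty$ with $\iota(\sigma_n)\to\iota(\sigma)$, then eventually $\sigma_n(A_i)=\sigma(A_i)$ for $i=1,2$, so
\[
\{\sigma_n(x)\}=\sigma_n(A_1\cap A_2)=\sigma_n(A_1)\cap\sigma_n(A_2)=\sigma(A_1)\cap\sigma(A_2)=\{\sigma(x)\},
\]
where we use that each $\sigma_n$ is a permutation of $\N$ and thus preserves intersections. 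This recovers pointwise convergence on $\N$.

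Applying the cited fact that an embedding between Polish groups has closed image, $\iota(\mathfrak S_\infty)$ is closed in $\sym([\N]_k)$, so the closure of $\mathfrak S_{(\infty)}$ is contained in $\iota(\mathfrak S_\infty)$. For the reverse inclusion, it suffices to use the classical density of $\mathfrak S_{(\infty)}$ in $\mathfrak S_\infty$: given $\sigma\in\mathfrak S_\infty$ and a finite $F\subset\N$, a finite-support permutation agreeing with $\sigma$ on $F$ is obtained by declaring $\tau=\sigma$ on $F\cup\sigma^{-1}(F)$, extending to a bijection of the finite set $F\cup\sigma(F)\cup\sigma^{-1}(F)$, and taking the identity elsewhere. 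Transporting through the topological embedding $\iota$ yields density of $\mathfrak S_{(\infty)}$ in $\iota(\mathfrak S_\infty)$, completing the identification. Finally, $\mathfrak S_{(\infty)}$ is normal in $\mathfrak S_\infty$ because conjugation preserves finite support, so the ``in particular'' statement follows at once.
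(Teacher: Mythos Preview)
Your proof is correct and follows essentially the same route as the paper: identify $\mathfrak S_{(\infty)}/\Delta$ with $[\N]_k$ via Theorem \ref{thm: classification maximal proper infinite index}, check that the natural map $\mathfrak S_\infty\to\sym([\N]_k)$ is a topological embedding using the same intersection trick (two $k$-sets meeting exactly in $\{x\}$), and invoke the closed-image fact for Polish group embeddings. The only difference is cosmetic: you spell out the density of $\mathfrak S_{(\infty)}$ in $\mathfrak S_\infty$ and the normality assertion, both of which the paper leaves implicit.
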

\begin{proof}
By Theorem \ref{thm: classification maximal proper infinite index} we know that $\Delta$ is the stabilizer of a subset of $\N$ of cardinality $k$. Denote by $[\N]_k$ the set of subsets of $\N$ of cardinality $k$.  
Then $\mathfrak S_{(\infty)}$ acts transitively on $[\N]_k$, so we can identify $\mathfrak S_{(\infty)}/\Delta$ to $[\N]_k$ and the $\mathfrak S_{(\infty)}$-action on $\mathfrak S_{(\infty)}/\Delta$ extends to the natural $\mathfrak S_\infty$-action on $[\N]_k$. 
Denote by $\pi_k: \mathfrak S_\infty\to \mathfrak S([\N]_k)$ the associated continuous group morphism.  

Let us show $\pi_k$ is a homeomorphism onto its image. Suppose $\pi_k(\sigma_n)\to \id_{[\N]_k}$, and let $x\in \N$. We find $A,B\in [\N]_k$ such that $A\cap B=\{x\}$, so for large enough $n$ we have $\sigma_n(A)\cap \sigma_n(B)=A\cap B=\{x\}$ and we thus have $\sigma_n(x)=x$. We conclude $\sigma_n\to \id_\N$, so $\pi_k$ is indeed a homeomorphism onto its image.

We conclude $\pi_k(\mathfrak S_\infty)$ is a closed subgroup of $\mathfrak S([\N]_k)$ and thus the Schlichting completion of $\mathfrak S_{(\infty)}$ with respect to $\Delta$ is $\mathfrak S_\infty$.
\end{proof}

Let us make explicit the reasoning we used in the above proposition so as to identify a Schlichting completion. 

\begin{lemma}\label{lem: identifying Schlichting}
Let $A$ be a countable set, let $G\leq\mathfrak S(A)$ be a closed subgroup acting transitively on $A$. Suppose $\Gamma$ is a countable dense subgroup of $G$. Then for every $x\in A$, the Schlichting completion of $\Gamma$ with respect to $\Gamma_x$ is isomorphic to $G$ as a topological group. 
\end{lemma}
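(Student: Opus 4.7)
The plan is to show that under a suitable identification, the Schlichting completion realizes as the closure of $\Gamma$ inside $\mathfrak S(A)$, which by hypothesis equals $G$.

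\textbf{Step 1: $\Gamma$ acts transitively on $A$.} Given $x,y \in A$, the set $\{g \in \mathfrak S(A) \colon g x = y\}$ is open in the topology of pointwise convergence, so the set $\{g \in G \colon gx = y\}$ is open in $G$; by transitivity of $G$ it is nonempty, and by density of $\Gamma$ in $G$ it contains some $\gamma \in \Gamma$. So $\Gamma x = A$, and we obtain a $\Gamma$-equivariant bijection $\iota \colon \Gamma/\Gamma_x \to A$ defined by $\gamma \Gamma_x \mapsto \gamma x$, which intertwines the left-translation action on $\Gamma/\Gamma_x$ with the $\Gamma$-action on $A$.

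\textbf{Step 2: Rewriting the Schlichting completion.} The bijection $\iota$ induces a topological group isomorphism $\iota_* \colon \mathfrak S(\Gamma/\Gamma_x) \to \mathfrak S(A)$ (both endowed with the pointwise convergence topology, the underlying sets being discrete), and under this isomorphism the canonical homomorphism $\Gamma \to \mathfrak S(\Gamma/\Gamma_x)$ corresponds to the inclusion $\Gamma \hookrightarrow \mathfrak S(A)$. Consequently, the Schlichting completion of $\Gamma$ with respect to $\Gamma_x$, which is by definition the closure of $\Gamma$ in $\mathfrak S(\Gamma/\Gamma_x)$, is (via $\iota_*$) topologically isomorphic to the closure of $\Gamma$ in $\mathfrak S(A)$.

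\textbf{Step 3: Identifying the closure.} Since $G$ is assumed to be closed in $\mathfrak S(A)$ and $\Gamma \subseteq G$, the closure of $\Gamma$ in $\mathfrak S(A)$ coincides with the closure of $\Gamma$ in $G$ (with its subspace topology, which is again the pointwise convergence topology). By density of $\Gamma$ in $G$, this closure equals $G$. Combining this with Step~2 yields a topological group isomorphism between the Schlichting completion of $\Gamma$ with respect to $\Gamma_x$ and $G$, proving the lemma.

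The main point requiring care is not computational but conceptual: one must check that the identification $\iota_*$ is a \emph{topological} group isomorphism, which is automatic from the fact that both pointwise-convergence topologies are induced by the same family of ``stabilizer'' basic open sets and $\iota$ is a bijection of (discrete) sets. No genuine obstacle arises.
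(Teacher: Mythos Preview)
Your proof is correct and follows essentially the same approach as the paper: identify $A$ with $\Gamma/\Gamma_x$ via the orbit map, transport this to an isomorphism of symmetric groups, and use that $G$ is closed and $\Gamma$ dense to identify the closure. Your Step~1 makes explicit the transitivity of $\Gamma$ on $A$, which the paper leaves implicit in the phrase ``natural identification.''
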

\begin{proof}
We have a natural identification of $A$ with $\Gamma/\Gamma_x$ which induces an isomorphism $\Phi:\mathfrak S(A)\to \mathfrak S(\Gamma/\Gamma_x)$. Notice that $\Phi(\Gamma)$ is then dense in $\Phi(G)$, and since the closure of $\Phi(\Gamma)$ is the Schlichting completion of $\Gamma$ with respect to $\Gamma_x$, we get that $\Phi(G)$ is the Schlichting completion of $\Gamma$ over $\Gamma_x$. 
\end{proof}

We now move on to case \eqref{eq:infinite biindex not transitive} from Theorem \ref{thm: classification maximal proper infinite index}. 
As it turns out, the study of this group as a Polish group was initiated by Cornulier \cite{Corn}. We need a few preliminary definitions. 

\begin{definition}
Let $X$ be an infinite coinfinite subset of $\N$.
The \textbf{commensurating subgroup} of $X$ is the group $\mathfrak S(\N,X)$ of all permutations $\sigma\in\mathfrak S_\infty$ such that $\sigma(X)\bigtriangleup X$ is finite. 
\end{definition}

The commensurating subgroup is a Polish non-archimedean group for the topology of pointwise convergence on the countable set $\Comm_X(\N)$ of subsets $Y$ of $\N$ \textbf{commensurated} to $X$, i.e. such that $X\bigtriangleup Y$ is finite. Note that $\mathfrak S(\N,X)$ acts transitively on $\Comm_X(\N)$. 

Define the \textbf{transfer character} $tr$ on $\mathfrak S(\N,X)$ by 
$$tr(\sigma)=\abs {\sigma X\setminus X}-\abs{X\setminus \sigma X}.$$
It can be shown that $tr$ is a continuous homomorphism $\mathfrak S(\N,X)\to \Z$ (see \cite[Prop. 4.H.1]{Corn}. Its kernel is denoted by $\mathfrak S^0(\N,X)$, it contains $\mathfrak S_{(\infty)}$ as a dense subgroup \cite[Prop. 4.H.3, Prop. 4.H.4]{Corn}. Moreover, $\mathfrak S^0(\N,X)$ acts transitively on $\Comm^0_X(\N):=\{Y\in\Comm_X(\N): \abs{X\setminus Y}=\abs{Y\setminus X}\}$. 
We have the following basic lemma on the relationship between $\Comm^0_X(\N)$ and $\Comm_X(\N)$.

\begin{lemma}
Every element of $\Comm_X(\N)$ can be written either as the reunion or the intersection of two elements of $\Comm^0_X(\N)$.
\end{lemma}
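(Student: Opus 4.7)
Plan: Let $Y\in\Comm_X(\N)$ and set $a:=|Y\setminus X|$ and $b:=|X\setminus Y|$; both are finite by assumption. The natural dichotomy is whether $Y$ is ``bigger'' or ``smaller'' than $X$ relative to their symmetric difference, and in each case we either pad $Y$ up or trim it down in two slightly different ways to reach elements of $\Comm^0_X(\N)$. Concretely, I would split according to whether $a\geq b$ or $a\leq b$ (the case $a=b$ being trivial, as $Y$ itself is already in $\Comm^0_X(\N)$).

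Case $a\geq b$: the plan is to write $Y=Y_1\cup Y_2$ by removing from $Y$ two disjoint ``correcting'' pieces inside $Y\cap X$. Note that $Y\cap X=X\setminus(X\setminus Y)$ is infinite (since $X$ is infinite and $X\setminus Y$ is finite), so one can pick two disjoint subsets $D_1,D_2\subseteq Y\cap X$ with $|D_1|=|D_2|=a-b$. Set $Y_i:=Y\setminus D_i$. Since $D_i\subseteq X$, one has $Y_i\setminus X=Y\setminus X$, so $|Y_i\setminus X|=a$; while $X\setminus Y_i=(X\setminus Y)\sqcup D_i$, so $|X\setminus Y_i|=b+(a-b)=a$. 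Hence $Y_i\in\Comm^0_X(\N)$, and $Y_1\cup Y_2=Y$ since $D_1\cap D_2=\emptyset$.

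Case $a\leq b$: the plan is symmetric, writing $Y=Y_1\cap Y_2$ by adding to $Y$ two disjoint pieces living outside $X\cup Y$. Here the key set is $\N\setminus(X\cup Y)=(\N\setminus X)\setminus(Y\setminus X)$, which is infinite because $X$ is coinfinite and $Y\setminus X$ is finite. Pick disjoint subsets $A_1,A_2\subseteq\N\setminus(X\cup Y)$ with $|A_1|=|A_2|=b-a$, and set $Y_i:=Y\cup A_i$. Then $Y_i\setminus X=(Y\setminus X)\sqcup A_i$ has cardinality $a+(b-a)=b$, while $X\setminus Y_i=X\setminus Y$ has cardinality $b$. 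Hence $Y_i\in\Comm^0_X(\N)$, and $Y_1\cap Y_2=Y\cup(A_1\cap A_2)=Y$.

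There is no real obstacle: the argument is essentially a counting exercise once one observes that both $Y\cap X$ and $\N\setminus(X\cup Y)$ are infinite, which is exactly where we use that $X$ is infinite and coinfinite respectively. The only mild subtlety is to remember to split into two cases depending on the sign of $a-b$, so that one knows whether to use union (when $Y$ is ``larger'') or intersection (when $Y$ is ``smaller'') to adjust the imbalance between $a$ and $b$.
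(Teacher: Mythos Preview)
Your proof is correct and follows essentially the same approach as the paper's: write $Y=(X\setminus F_1)\sqcup F_2$, compare $|F_1|$ and $|F_2|$, and correct by suitable finite sets to land in $\Comm^0_X(\N)$. The only cosmetic difference is that the paper, in the case $|X\setminus Y|>|Y\setminus X|$, takes one correcting set outside $X\cup Y$ and the other inside $X\setminus Y$, whereas you take two disjoint correcting sets both outside $X\cup Y$ (and symmetrically in the other case); either choice works for the same elementary counting reason.
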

\begin{proof}
Let $Y\in \Comm_X(\N)$, we write $Y=(X\setminus F_1)\sqcup F_2$ with $F_1\Subset X$ and $F_2\Subset \N\setminus X$. If $\abs{F_1}>\abs{F_2}$, we find $F_3\subseteq \N\setminus (X\sqcup F_2)$ such that $\abs{F_3}=\abs{F_1}-\abs{F_2}$ and $F_4\subseteq F_1$ such that $\abs{F_4}=\abs{F_2}$.
Let $Y_1=(X\setminus F_1)\sqcup (F_2\sqcup F_3)$ and $Y_2= (X\setminus F_4)\sqcup F_2$. Then both $Y_1$ and $Y_2$ are in $\Comm_X^0(\N)$, and $Y=Y_1\cap Y_2$. The case $\abs{F_1}\geq\abs{F_2}$ is similar, and one obtains that $Y$ can be obtained as the union of two elements of $\Comm^0_X(\N)$. 
\end{proof}

\begin{proposition}\label{prop: completion maximal infinite biindex}
Let $X$ be an infinite coinfinite subset of $\N$. Denote by $\Delta$ the group of finitely supported permutations $\sigma$ such that $\sigma(X)=X$. Then the Schlichting completion of $\mathfrak S_{(\infty)}$ with respect to $\Delta$ is $\mathfrak S^0(\N,X)$. In particular, $\mathfrak S_{(\infty)}$ is a normal subgroup of its Schlichting completion with respect to $\Delta$.
\end{proposition}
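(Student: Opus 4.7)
The plan is to apply Lemma \ref{lem: identifying Schlichting} with $G = \mathfrak S^0(\N,X)$, countable set $A = \Comm_X^0(\N)$, and dense subgroup $\Gamma = \mathfrak S_{(\infty)}$. The paper has already recorded three of the four needed ingredients: $\mathfrak S_{(\infty)} \subseteq \mathfrak S^0(\N,X)$ (any finitely supported permutation has transfer $0$ because it preserves its own finite support setwise), $\mathfrak S^0(\N,X)$ acts transitively on $\Comm^0_X(\N)$, and $\mathfrak S_{(\infty)}$ is dense in $\mathfrak S^0(\N,X)$ for its Polish topology. Moreover the stabilizer of $X$ viewed as a point of $\Comm^0_X(\N)$, intersected with $\mathfrak S_{(\infty)}$, is exactly $\Delta$ by definition.

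The one genuinely nontrivial step is to check that the natural map $\iota \colon \mathfrak S^0(\N,X) \to \mathfrak S(\Comm^0_X(\N))$ is a topological embedding with closed image. The map is injective: if $\sigma \in \mathfrak S^0(\N,X)$ fixes every element of $\Comm^0_X(\N)$, then for any $n_1 \in X$ and $n_2 \notin X$ it fixes $X$ and $(X \setminus \{n_1\}) \cup \{n_2\}$, hence fixes $\{n_1\}$ and $\{n_2\}$; this forces $\sigma = \id$. For the topological part, the Polish topology on $\mathfrak S^0(\N,X)$ is that of pointwise convergence on $\Comm_X(\N)$, while the topology induced by $\iota$ is that of pointwise convergence on $\Comm^0_X(\N) \subseteq \Comm_X(\N)$. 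The restriction direction is clear. For the reverse, invoke the lemma stating that every $Y \in \Comm_X(\N)$ can be written either as $Y_1 \cap Y_2$ or $Y_1 \cup Y_2$ with $Y_1,Y_2 \in \Comm^0_X(\N)$, together with the fact that permutations commute with set-theoretic unions and intersections; this yields that pointwise convergence on $\Comm^0_X(\N)$ implies pointwise convergence on $\Comm_X(\N)$. Thus $\iota$ is a continuous bijection between two Polish group topologies, hence a topological embedding, and its image is automatically closed in $\mathfrak S(\Comm^0_X(\N))$ (applying the cited Prop.~2.2.1 of \cite{MR2455198}). This is the step I expect to be most delicate, since it is where the definition of $\Comm^0_X(\N)$ is genuinely used.

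With these four ingredients in place, Lemma \ref{lem: identifying Schlichting} yields immediately that the Schlichting completion of $\mathfrak S_{(\infty)}$ with respect to $\Delta$ is isomorphic, as a topological group, to $\mathfrak S^0(\N,X)$.

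Finally, for the normality assertion, observe that for any $\sigma \in \mathfrak S^0(\N,X)$ and any $\tau \in \mathfrak S_{(\infty)}$, the permutation $\sigma \tau \sigma^{-1}$ has support $\sigma(\mathrm{supp}(\tau))$, which is still finite. Hence $\mathfrak S_{(\infty)}$ is normal in $\mathfrak S^0(\N,X)$, that is, in its Schlichting completion with respect to $\Delta$.
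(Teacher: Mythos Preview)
Your proof is correct and follows essentially the same route as the paper: both use the union/intersection lemma on $\Comm_X(\N)$ to show that the action map $\mathfrak S^0(\N,X)\to\mathfrak S(\Comm^0_X(\N))$ is a topological embedding, and then invoke Lemma~\ref{lem: identifying Schlichting}. Your write-up is slightly more detailed in that you give a separate injectivity argument and spell out the normality clause explicitly, whereas the paper treats injectivity as subsumed by the embedding check and leaves the normality as implicit; these are cosmetic differences only.
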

\begin{proof}
Let $\pi$ be the continuous morphism obtained by restricting the $\mathfrak S^0(\N,X)$-action to $\Comm^0_X(\N)$.
By the previous lemma, if $\pi(\sigma_n)\to \mathrm{id}_{\Comm^0_X(\N)}$ then $\sigma_n\to \mathrm{id}_{\Comm_X(\N)}$ so $\pi$ is an embedding. 
So $\mathfrak S^0(\N,X)$ may be viewed as a closed subgroup of $\mathfrak S(\Comm^0_X(\N))$. Its action on $\Comm^0_X(\N)$ is moreover clearly transitive. Since $\mathfrak S^0(\N,X)$ contains $\mathfrak S_{(\infty)}$ as a dense subgroup, the action of $\mathfrak S_{(\infty)}$ on $\Comm^0_X(\N)$ is also transitive, and the stabilizer of $X$ for this action is our subgroup $\Delta$.
Since $\mathfrak S^0(\N,X)$ contains $\mathfrak S_{(\infty)}$ as a dense subgroup, the desired conclusion follows from Lemma \ref{lem: identifying Schlichting}.
\end{proof}

Our third last possible Schlichting completion (case \eqref{eq:infinite biindex transitive} from Theorem \ref{thm: classification maximal proper infinite index}) arises as follows. 
Let us say that an \textbf{enumerated partition} of $\N$ is a sequence 
$\mathcal P=(A_i)_{i\in\N}$ of disjoint subsets of $\N$ which cover $\N$.
Let $k\in\N \cup\{\aleph_0\}$ with $k\geq 2$, and consider an enumerated 
partition $\mathcal P=(A_i)_{i\in\N}$ of $\N$ where each $A_i$ has cardinality $k$. We then define the \textbf{almost full group} of the enumerated partition $\mathcal P$ as the group 
$$A[\mathcal P]=\{\sigma\in \mathfrak S_\infty: \sigma(A_i)=A_i\text{ for all but finitely many }i\in\N\}.$$

The topology on $A[\mathcal P]$ is obtained by declaring that a basis of neighborhoods of the identity is made of the neighborhoods of the identity of the \textbf{full group} of $\mathcal P$
$$[\mathcal P]=\{\sigma\in \mathfrak S_\infty: \sigma(A_i)=A_i\text{ for all }i\in\N\}$$
equipped with the topology induced by $\mathfrak S_\infty$. In particular, $[\mathcal P]$ is an open subgroup of $A[\mathcal P]$. 
\begin{lemma}
The topology obtained by declaring that a basis of neighborhoods of identity in $A[\mathcal P]$ is formed by neighborhoods of the identity in $[\mathcal P]$ does define a group topology on $A[\mathcal P]$, which is Polish and moreover locally compact when $k$ is finite.
\end{lemma}
\begin{proof}
To prove that we have a group topology, it suffices to show that for every $g\in A[\mathcal P]$ and every $V$ neighborhood of the identity in $[\mathcal P]$, the set $gVg\inv \cap V$ is still a neighborhood of the identity in $[\mathcal P]$. 
We may assume that $V$ is the pointwise stabilizer in $[\mathcal P]$ of a finite set $B_{i_1}\sqcup\cdots \sqcup B_{i_l}$, where $B_{i_j} \Subset A_{i_j}$ for all $j$. In other words $V$ is the set of all permutations which fix every point in $B_{i_1}\sqcup\cdots \sqcup B_{i_l}$ and leave every $A_i$ invariant. 
It is clear that $gVg\inv$ is the set of all permutations which fix every point in $gB_{i_1}\sqcup\cdots \sqcup gB_{i_l}$ and leave every $gA_i$ invariant. 
But $g\in A[\mathcal P]$ so there are only finitely many $A_i$'s such that $gA_i\neq A_i$. 
In particular, we can find $F\Subset \N$ containing  $\{i_1,\dots,i_l\}$ such that $gA_i=A_i$ for all $i\in \N\setminus F$ and $gB_{i_1}\sqcup\cdots \sqcup gB_{i_l}\subseteq \sqcup_{i\in F}A_i$.
Then $gVg\inv\cap V$ contains the set  $W$ of all permutations which fix pointwise every element of $B_{i_1}\sqcup\cdots \sqcup B_{i_l} \sqcup gB_{i_1}\sqcup\cdots \sqcup gB_{i_l} $ and leave every $A_i$ invariant. Since $W$ is a neighborhood of the identity in $[\mathcal P]$, this ends the proof that $gVg\inv \cap V$ is still a neighborhood of the identity in $[\mathcal P]$, and we conclude that our topology is a group topology. 

Now observe that since $[\mathcal P]$ is open of countable index in $A[\mathcal P]$, the associated topology is Polish because $A[\mathcal P]$ is homeomorphic to the Polish space
$\N\times [\mathcal P]$. 
The fact that the topology is locally compact when $k$ is finite follows from the fact that in this case $[\mathcal P]$ is compact, since it is a closed permutation group and all of its orbits are finite.  
\end{proof}

We now consider a more concrete way to view the topology we defined above 
so as to apply Lemma \ref{lem: identifying Schlichting}. 

Given a (possibly infinite, countable) $k \ge 2$ and $\mathcal P=(A_i)_{i\in\N}$ an enumerated partition of $\N$ into subsets of cardinality $k$, we denote $O(\mathcal P)$ the orbit of $\mathcal P$ under $\mathfrak S_{(\infty)}$ (for the natural action of $\mathfrak S_\infty$ on the set $\mathcal E_k$ of enumerated partitions of $\N$ into subsets of cardinality $k$). 

When $k$ is finite, one has $\sigma(O(\mathcal P))=O(\mathcal P)$ for any $\sigma \in A[\mathcal P]$; indeed, in that case $O(\mathcal P)$ is simply the set of all enumerated partitions of $\N$ whose elements all have cardinality $k$, and with only finitely many elements not equal to an element of $\mathcal P$. When $k$ is infinite, this is no longer true, as the orbit of $\mathcal P$ under the action of $A[\mathcal P]$ is uncountable. We denote 
$$\tilde A[\mathcal P]= \{\sigma \in A[\mathcal P] \colon \sigma(O(\mathcal P))= O(\mathcal P)\}$$

When $k$ is finite, we simply have $\tilde A[\mathcal P]=A[\mathcal P]$

\begin{proposition}
$\tilde A[\mathcal P]$ is closed in $A[\mathcal P]$.
\end{proposition}

Hence $\tilde A[\mathcal P]$ is a Polish group in its own right. Before giving the proof, we would like to note that our original argument was flawed; the correct proof below is due to the referee. The proof itself is fairly routine, but we feel like it is worth mentioning the unusual level of care and precision of the referee's work (which we could also have pointed out in several other places) and express our gratitude again.

\begin{proof}
When $k$ is finite there is nothing to prove. Assume $k=\aleph_0$, and let $O'(\mathcal P)$ denote the orbit of $\mathcal P$ under the action of $A[\mathcal P]$; explicitly, $O'(\mathcal P)$ consists of all enumerated partitions $\mathcal Q=(B_i)_{i \in \N}$ with $|B_i|=\aleph_0$ for all $i$, and $B_i=A_i$ for all but finitely many $i$.

Since $[\mathcal P]$ is the stabilizer of $\mathcal P$ for the action of $A[\mathcal P]$ on $O'(\mathcal P)$, and $[\mathcal P]$ is an open subgroup of $A[\mathcal P]$, the morphism $\pi \colon \tilde A[\mathcal P] \to \mathfrak S(O'(\mathcal P))$ is continuous (as usual $\mathfrak S(O'(\mathcal P))$ is endowed with the topology of pointwise convergence for the discrete topology on $O'(\mathcal P)$; one should perhaps note here that $O'(\mathcal P)$ is not countable, but that does not affect the argument). 

Observe that $\tilde A[\mathcal P]= \{\sigma \in A[\mathcal P] \colon \sigma(O(\mathcal P))= O(\mathcal P)\}$, which proves that $\tilde A[\mathcal P]$ is closed in $A[\mathcal P]$. 

\end{proof}

\begin{proposition}
$\mathfrak S_{(\infty)}$ is dense in $\tilde A[\mathcal P]$.
\end{proposition}

\begin{proof}
Pick $g \in \tilde A[\mathcal P]$. We may find $\sigma \in \mathfrak S_{(\infty)}$ such that $\sigma(\mathcal P)=g(\mathcal P)$, hence $\sigma^{-1}g \in [\mathcal P]$. The desired result now follows from the density of the group of finitely supported elements of $[\mathcal P]$ in $[\mathcal P]$.
\end{proof}

\begin{proposition}
The topology on $\tilde A[\mathcal P]$ coincides with the topology of pointwise convergence on $O(\mathcal P)$. 
\end{proposition}
\begin{proof}
The natural map $\pi: \tilde A[\mathcal P] \to \mathfrak S(O(\mathcal P))$ is continuous (as above, this follows from the fact that $[\mathcal P]$ is the stabilizer of $\mathcal P$ for the action of $\mathfrak S_\infty$ on $\mathcal E_k$ and $[\mathcal P] \cap \tilde A[\mathcal P]$ is open in $\tilde A[\mathcal P]$). 

Note next that $\pi$ is injective. To see this, observe that any element of the kernel of $\pi$ must belong to $[\mathcal Q]$ for any $\mathcal Q \in O(\mathcal P)$. Pick such a $\sigma \in \tilde A[\mathcal P]$.

Then let $x\in\N$, write again $\mathcal P=(A_i)_{i\in\N}$ and let $i_0\in\N$ be such that $x\in A_{i_0}$. Let $j_0\neq i_0$, fix $y\in A_{j_0}$. 
Define a new enumerated partition $\mathcal Q=(B_i)_{i\in\N}$ by letting 
\[
B_i=\left\{\begin{array}{cl}
\{x\}\cup (A_{j_0}\setminus \{y\}) & \text{if }i=i_0 \\
\{y\}\cup (A_{i_0}\setminus \{x\}) & \text{if }i=j_0 \\
A_i & \text{otherwise.}
\end{array}\right.
\]
Observe that $\mathcal Q \in O(\mathcal P)$ and, since $\sigma$ belongs to both $[\mathcal Q]$ and $[\mathcal P]$, it must fix $A_{i_0}\cap B_{i_0}=\{x\}$, so $\sigma(x)=x$. We conclude that $\sigma=1$, so that $\pi$ is injective as promised.

Suppose that $(\sigma_n) \in \tilde A[\mathcal P]$ is such that $\pi(\sigma_n)$ converges to $1$ in $\mathfrak S(O(\mathcal P))$; pick $x \in \N$ and define $\mathcal Q \in O(\mathcal P)$ in the same way as above. For $n$ large enough, one must have $\sigma(\mathcal P)=\mathcal P$ and $\sigma(\mathcal Q)=\mathcal Q$, hence $\sigma(x)=x$. This proves that $\pi$ is a topological group embedding.
\end{proof}

\begin{corollary}\label{cor: completion infinite biindex transitive}
Let $H_k$ be the group of finitely supported bijections which fix an enumerated 
partition $\mathcal P_k$ of $\N$ into sets of (possibly infinite) cardinality $k$. 
Recall that $\tilde A[\mathcal P_k]$ denotes the closure of $\mathfrak S_{(\infty)}$ in $A[\mathcal P_k]$.
 
Then the Schlichting completion of $\mathfrak S_{(\infty)}$ with respect to $H_k$ is equal to $\tilde A[\mathcal P_k]$.
In particular, $\mathfrak S_{(\infty)}$ is a normal subgroup of its Schlichting completion with respect to $H_k$.

\end{corollary}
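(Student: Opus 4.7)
The plan is to apply Lemma \ref{lem: identifying Schlichting} with $\Gamma = \mathfrak S_{(\infty)}$, the ambient closed group $G = A[\mathcal P_k]$, and the countable set $A = AE(\mathcal P_k)$ consisting of all partitions of $\N$ into $k$-element blocks almost equal to $\mathcal P_k$, with base point $x = \mathcal P_k$. To do so, I need to verify the four hypotheses of that lemma: $AE(\mathcal P_k)$ is countable, $A[\mathcal P_k]$ is closed in $\mathfrak S(AE(\mathcal P_k))$, it acts transitively on $AE(\mathcal P_k)$, and $\mathfrak S_{(\infty)}$ is a dense subgroup whose stabilizer of $\mathcal P_k$ coincides with $H_k$.

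First, countability of $AE(\mathcal P_k)$ is immediate since an almost equal partition is specified by a finite modification of $\mathcal P_k$. Transitivity of $A[\mathcal P_k]$ on $AE(\mathcal P_k)$ is built into the definition: given $\mathcal Q_k \in AE(\mathcal P_k)$, there are only finitely many blocks of $\mathcal P_k$ that disagree with those of $\mathcal Q_k$, and any bijection which matches them up on this finite set and is the identity elsewhere lies in $A[\mathcal P_k]$. The stabilizer of $\mathcal P_k$ in $\mathfrak S_{(\infty)}$ is by definition the group of finitely supported bijections fixing $\mathcal P_k$ blockwise, which is $H_k$.

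The main content is showing that $A[\mathcal P_k]$ embeds as a closed subgroup of $\mathfrak S(AE(\mathcal P_k))$ under its natural action. The proposition immediately preceding the corollary does exactly this: it identifies the topology of $A[\mathcal P_k]$ (defined via the open compact subgroup $[\mathcal P_k]$) with the topology of pointwise convergence on $AE(\mathcal P_k)$. Since $A[\mathcal P_k]$ is then a Polish group (by the lemma preceding that proposition) continuously and injectively embedded in the Polish group $\mathfrak S(AE(\mathcal P_k))$ in a way that is a homeomorphism onto its image, its image is automatically closed by the standard fact cited in the treatment of the other two Schlichting completions. The density of $\mathfrak S_{(\infty)}$ in $A[\mathcal P_k]$ is also established in the lemma preceding the proposition. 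All hypotheses of Lemma \ref{lem: identifying Schlichting} are met, and we conclude that the Schlichting completion of $\mathfrak S_{(\infty)}$ with respect to $H_k$ is isomorphic as a topological group to $A[\mathcal P_k]$.

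For the final assertion, $\mathfrak S_{(\infty)}$ is normal in $\mathfrak S_\infty$, and $A[\mathcal P_k]$ is a subgroup of $\mathfrak S_\infty$, so the normality of $\mathfrak S_{(\infty)}$ in its Schlichting completion follows. I do not anticipate serious obstacles here since both the identification of the topology and the density of $\mathfrak S_{(\infty)}$ are already carried out in the two results immediately preceding the corollary; the proof is essentially a direct assembly of those pieces via Lemma \ref{lem: identifying Schlichting}, fully parallel to the proofs of Propositions \ref{prop: completion maximal finite biindex} and \ref{prop: completion maximal infinite biindex}.
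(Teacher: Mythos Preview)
Your proposal is correct and follows essentially the same approach as the paper. The paper's proof is a one-line invocation of the preceding proposition (identifying the topology on $A[\mathcal P_k]$ with pointwise convergence on $AE(\mathcal P_k)$) together with Lemma~\ref{lem: identifying Schlichting}; you have simply unpacked in detail the verification of the hypotheses of that lemma, which is exactly what the paper leaves implicit.
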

\begin{proof}
This follows from the two previous propositions, combined with Lemma \ref{lem: identifying Schlichting}.
\end{proof}

\subsection{Proof of Theorem \ref{t:anexample}}

Combining Proposition \ref{prop: completion maximal finite biindex}, Proposition \ref{prop: completion maximal infinite biindex} and Corollary \ref{cor: completion infinite biindex transitive}  with the classification of the maximal proper subgroups of $\mathfrak S_{(\infty)}$ of infinite index, we obtain the following proposition. 

\begin{proposition}
Let $\Delta\leq\mathfrak S_{(\infty)}$ be a maximal proper subgroup of $\mathfrak S_{(\infty)}$ of infinite index. Then $\mathfrak S_{(\infty)}$ is a normal subgroup of its Schlichting completion with respect to $\Delta$.
\end{proposition}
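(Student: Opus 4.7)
The plan is to simply combine Theorem~\ref{thm: classification maximal proper infinite index} with the three preceding structural results. By the classification theorem, a maximal proper subgroup $\Delta$ of infinite index falls into exactly one of the three cases: (i) $\Delta$ has finite biindex $k+1$ and is the setwise stabilizer of a $k$-element subset of $\N$; (ii) $\Delta$ is the setwise stabilizer of an infinite coinfinite subset $X\subseteq\N$; (iii) $\Delta$ is the stabilizer in $\mathfrak S_{(\infty)}$ of an equivalence relation on $\N$ all of whose classes have a common finite cardinality $k\geq 2$. So first I would invoke Theorem~\ref{thm: classification maximal proper infinite index} to reduce to these three subcases.

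In each case, the corresponding previous result already identifies the Schlichting completion of $\mathfrak S_{(\infty)}$ over $\Delta$ with a concrete permutation group: Proposition~\ref{prop: completion maximal finite biindex} handles case (i), giving completion $\mathfrak S_\infty$; Proposition~\ref{prop: completion maximal infinite biindex} handles case (ii), giving completion $\mathfrak S^0(\N,X)$; and Corollary~\ref{cor: completion infinite biindex transitive} handles case (iii), giving completion the almost full group $A[\mathcal P_k]$. In each of these three propositions the statement ``$\mathfrak S_{(\infty)}$ is a normal subgroup of its Schlichting completion with respect to $\Delta$'' is already included, so formally the proof is nothing more than a case distinction.

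For transparency, I would also briefly record why normality is immediate in each concrete completion: $\mathfrak S_{(\infty)}$ is characterized inside $\mathfrak S_\infty$, inside $\mathfrak S^0(\N,X)$, and inside $A[\mathcal P_k]$ as the subgroup of \emph{finitely supported} permutations of $\N$, and this description is invariant under conjugation by any permutation of $\N$ since conjugation preserves support cardinality. Thus in each of the three completions $G$, one has $gFg^{-1}\subseteq\mathfrak S_{(\infty)}$ for every $g\in G$ and every finitely supported $F$, whence normality.

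There is no real obstacle here since the hard work was already done in the classification theorem and in the three explicit computations of Schlichting completions; the present proposition is essentially the packaging of those three results into one uniform statement, and the proof reduces to ``apply Theorem~\ref{thm: classification maximal proper infinite index} and quote the relevant one of Propositions~\ref{prop: completion maximal finite biindex}, \ref{prop: completion maximal infinite biindex}, or Corollary~\ref{cor: completion infinite biindex transitive}''.
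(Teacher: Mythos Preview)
Your proposal is correct and matches the paper's own proof exactly: the paper simply states that the proposition follows by combining Theorem~\ref{thm: classification maximal proper infinite index} with Proposition~\ref{prop: completion maximal finite biindex}, Proposition~\ref{prop: completion maximal infinite biindex}, and Corollary~\ref{cor: completion infinite biindex transitive}, and gives no further argument. Your extra paragraph explaining why $\mathfrak S_{(\infty)}$ is normal in each concrete completion (as the subgroup of finitely supported permutations, a conjugation-invariant description) is a helpful addition but not something the paper spells out.
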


\begin{question}
Is there a way to establish that result more directly, without first having to describe the maximal proper subgroups? In the case of $\mathfrak S_{(\infty)}$ that was not hard to do (and is sufficient to establish the result above, since then the homogeneous spaces $\mathfrak S_{(\infty)}/\Delta$ are easy to understand) but that sort of hands-on approach seems unlikely to work in more general settings. 
\end{question}

Recall from Proposition \ref{prop: dense in iso implies maximal proper} that if $\mathfrak S_{(\infty)}\leq \Iso(\US)$ is dense, then the $\mathfrak S_{(\infty)}$-stabilizer of any point in $\US$ is a maximal proper subgroup of infinite index. As we observed before, $\Iso(\US)$ must then be isomorphic to the Schlichting completion of $\mathfrak S_{(\infty)}$ with respect to that subgroup. 
To finish the proof of Theorem \ref{t:anexample}, it is thus enough to prove the following fact.

\begin{proposition}
Assume that $|S| \ge 3$. Then the conjugacy class of any nonidentity element of $\Iso(\U_S)$ is uncountable. In particular, $\Iso(\U_S)$ does not admit a nontrivial countable normal subgroup.
\end{proposition}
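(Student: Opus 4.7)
The plan is to establish the stronger statement that the centralizer $C(g) := \{h\in\Iso(\U_S) : hgh^{-1}=g\}$ has uncountable index for every $g\ne 1$ in $\Iso(\U_S)$; both the uncountability of the conjugacy class and the nonexistence of a nontrivial countable normal subgroup will follow, the latter because any normal subgroup is a union of conjugacy classes of its elements. Since $\Iso(\U_S)$ is Polish and $C(g)$ is closed, if the conjugacy class of $g$ were at most countable then $\Iso(\U_S)$ would be a countable union of closed cosets of $C(g)$; by the Baire category theorem one of them would have nonempty interior, so $C(g)$ would be open. As the pointwise stabilizers $U_F := \{\tau\in\Iso(\U_S) : \tau|_F = \id\}$ of finite sets $F\Subset\U_S$ form a neighborhood basis of the identity, the problem reduces to ruling out $U_F\subseteq C(g)$ for every $g\ne 1$.

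Assuming $U_F\subseteq C(g)$, I would first show that $g$ has finite support via the following ``twin lemma'': every $z\in\U_S\setminus F$ admits some $z'\in\U_S\setminus\{z\}$ with $d(z',f)=d(z,f)$ for all $f\in F$. Such a $z'$ exists thanks to the extension property of $\U_S$ applied to $F\cup\{z\}$ with $d(z',z):=\min_{f\in F}d(z,f)\in S\setminus\{0\}$, for which the triangle inequalities are immediate; ultrahomogeneity then yields $\tau\in U_F$ mapping $z$ to $z'$, showing that no point outside $F$ is fixed by all of $U_F$. On the other hand, for any $\tau\in U_F\subseteq C(g)$ and $f\in F$, the identity $\tau(g(f))=g(\tau(f))=g(f)$ shows that $g(f)$ is fixed by every element of $U_F$, and the twin lemma then forces $g(f)\in F$. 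Replacing $F$ by $F\cup\{p\}$ (whose pointwise stabilizer still lies in $C(g)$) and repeating this argument, any $p\in\U_S\setminus F$ with $g(p)\ne p$ must satisfy $g(p)\in F$, so the support $T:=\{z\in\U_S : g(z)\ne z\}$ is finite, being contained in $F\cup g^{-1}(F)$.

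The crucial step, and the only place where the assumption $|S|\ge 3$ is used, is the claim that for every $f\ne f'$ in $\U_S$ there exists $z\in\U_S$ with $d(z,f)\ne d(z,f')$. By the extension property this reduces to producing $r_1\ne r_2$ in $S\setminus\{0\}$ satisfying $|r_1-r_2|\le d(f,f')\le r_1+r_2$. Setting $r_1 := d(f,f')$, the closure property of $S$ supplies $r_2 := 2d(f,f')\in S$ in the unbounded case and $r_2 := \min(2d(f,f'),1)\in S$ in the bounded case; either way $r_2\in(0,2d(f,f')]$ and $r_2\ne r_1$, except in the bounded case when $d(f,f')=1$. In that remaining situation, the assumption $|S|\ge 3$ supplies some $s\in S\setminus\{0,1\}\subseteq (0,1)$ and $r_2:=s$ finishes the argument. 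This case analysis is the main obstacle, as it must be carried out uniformly across the bounded and unbounded settings.

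To conclude, I would pick any $p\in T$ and set $q := g(p)\ne p$. The claim produces $z\in\U_S\setminus\{p,q\}$ with $d(z,p)\ne d(z,q)$, and iterating the extension property yields infinitely many such $z$, so one may be chosen outside the finite set $T$. Then $g(z)=z$, and the isometry property of $g$ gives $d(z,q) = d(g(z),g(p)) = d(z,p)$, contradicting the choice of $z$. Hence $T=\emptyset$, $g=1$, and the proposition follows.
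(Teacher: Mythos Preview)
Your proof is correct and uses the same two ingredients as the paper's: the ``twin lemma'' (every point outside a finite set $F$ can be moved by some element of $U_F$) and the observation that $|S|\ge 3$ lets one place a point at two prescribed \emph{distinct} distances from a given pair. However, you arrange them in the opposite order and take a longer route. You first use the twin lemma to show that $g(F)\subseteq F$ and deduce that $g$ has finite support, and only then invoke $|S|\ge 3$ to produce a fixed point $z$ at distinct distances from some $p$ and $g(p)$, contradicting that $g$ is an isometry. The paper is more direct: it uses $|S|\ge 3$ immediately to find $x\notin F$ with $g(x)\ne x$ (any point at distinct distances $s_1,s_2$ from some $y,g(y)$ is moved by $g$), and then the twin lemma applied to $F\cup\{x\}$ at the point $g(x)$ directly yields $h\in U_{F\cup\{x\}}\subseteq C(g)$ with $hg(x)\ne g(x)=gh(x)$. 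Your detour through finite support is unnecessary but harmless, and your case analysis for the Kat\v{e}tov pair $(r_1,r_2)$ is a bit more explicit than the paper's; both arguments ultimately rest on the same two facts.
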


\begin{proof}
Assume that $g \in \Iso(\U_S)$ has a countable conjugacy class; then its centralizer $C(g)$ is a closed subgroup with countable index, so it is clopen. Hence, there must exist a finite subset $F$ of $\U_S$ such that $g$ commutes with any element $h$ satisfying $h(x)=x$ for all $x \in F$. We want to prove that $g=\id$ and assume for a contradiction that such is not the case.

We first claim that there must exist $x \not \in F$ such that $g(x) \neq x$. To see this, pick $y$ such that $g(y) \ne y$. Since $|S| \ge 3$, we can find $s_1 \ne s_2 \in S$ such that the map $f \colon \{y,g(y)\} \to \R$ defined by setting $f(y)=s_1$, $f(g(y))=s_2$ is Kat\v{e}tov. Thus there exist infinitely many $x \in \U_S$ such that $d(x,y)=s_1$ and $d(x,g(y))=s_2$; in particular there exists such an $x$ which does not belong to $F$ and since $d(x,y) \ne d(x,g(y))$ we have $g(x) \ne x$.

Next, pick $h \in \Iso(\U_S)$ such that $h$ coincides with the identity on $F \cup \{x\}$ yet $h(g(x))\neq g(x)$. Then $hg(x) \neq gh(x)$, so $h$ does not commute with $g$, a contradiction. Hence $g=\id$.
\end{proof}

Another nice consequence of Theorem \ref{thm: classification maximal proper infinite index} is that $\mathfrak S_{(\infty)}$ has only one $2$-transitive action on $\N$ up to conjugacy. 
\begin{proposition}
Let $\mathfrak S_{(\infty)}\act\N$ be a $2$-transitive action. Then the action is conjugate to the natural action of $\mathfrak S_{(\infty)}$ on $\N$. 
\end{proposition}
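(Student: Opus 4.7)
The plan is to apply the classification of maximal proper subgroups of $\mathfrak S_{(\infty)}$ of infinite index (Theorem \ref{thm: classification maximal proper infinite index}) to the point stabilizer of the given $2$-transitive action, and show that only case (1) with $k=1$ survives. That case corresponds exactly to the natural action of $\mathfrak S_{(\infty)}$ on $\N$, from which conjugacy will follow immediately.

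More precisely, I will fix a $2$-transitive action $\mathfrak S_{(\infty)} \actson \N$, pick a point $x \in \N$, and let $\Delta$ be its stabilizer. Since a $2$-transitive action is in particular transitive and primitive, Proposition \ref{prop: trans prim implies max} ensures that $\Delta$ is a maximal proper subgroup of $\mathfrak S_{(\infty)}$; and since $\N$ is infinite, $\Delta$ has infinite index. The key additional feature of $2$-transitivity is that $\Delta$ acts transitively on $\N \setminus \{x\}$; equivalently, $\Delta$ has exactly two orbits on $\mathfrak S_{(\infty)}/\Delta \cong \N$, so the biindex of $\Delta$ in $\mathfrak S_{(\infty)}$ is equal to $2$.

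Now I invoke Theorem \ref{thm: classification maximal proper infinite index}. Cases (2) and (3) give infinite biindex and are therefore immediately excluded. Case (1) describes $\Delta$ as the setwise stabilizer of a finite subset of $\N$ of cardinality $k$, with biindex $k+1$; the equation $k+1=2$ forces $k=1$, so $\Delta$ is the stabilizer of some singleton $\{y_0\} \subseteq \N$ for the natural action of $\mathfrak S_{(\infty)}$ on $\N$. Both our action and the natural action are then transitive actions of $\mathfrak S_{(\infty)}$ on $\N$ with the same point stabilizer $\Delta$, so the map $\gamma \cdot x \mapsto \gamma \cdot y_0$ is a well-defined $\mathfrak S_{(\infty)}$-equivariant bijection, establishing the desired conjugacy.

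I do not anticipate any real obstacle here: the hard work has already been done in the classification theorem, and the only subtle point is correctly translating the $2$-transitivity hypothesis into the numerical condition ``biindex equals $2$'' and then matching that to the classification cases.
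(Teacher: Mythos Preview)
Your proof is correct and follows essentially the same approach as the paper: establish that the point stabilizer $\Delta$ is a maximal proper subgroup of infinite index with biindex $2$, then invoke Theorem \ref{thm: classification maximal proper infinite index} to force case (1) with $k=1$. The only cosmetic difference is that the paper derives maximality of $\Delta$ directly from biindex $2$ (any proper overgroup would contain a second $\Delta$-double coset and hence equal $\mathfrak S_{(\infty)}$), whereas you go through ``$2$-transitive $\Rightarrow$ primitive'' and Proposition \ref{prop: trans prim implies max}; both arguments are standard and equivalent.
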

\begin{proof}
Let $\Delta=\Stab_{\mathfrak S_{(\infty)}}(0)$, then $\Delta$ has infinite index and by $2$-transitivity its bi-index is $2$. This implies that $\Delta$ is maximal proper: if $\Delta<\Gamma$ then $\Gamma$ must contain a $\Delta$-double coset distinct from $\Delta$ and is thus equal to $\mathfrak S_{(\infty)}$.  The desired conclusion now follows from Theorem \ref{thm: classification maximal proper infinite index}.
\end{proof}
\begin{remark}
In \cite{Maitre:2015wd}, the second author defines a transitive action of a countable group $\Gamma$ on a countable set $X$ to be \emph{highly faithful}  if for every $F\Subset\Gamma$, there is $x\in X$ such that for all distinct $f_1,f_2\in F$, $f_1\cdot x\neq f_2\cdot x$. The natural action of $\mathfrak S_{(\infty)}$ on $\N$ is highly transitive but not highly faithful, so the above proposition yields that $\mathfrak S_{(\infty)}$ has no highly transitive highly faithful action, answering a question raised in \cite{Maitre:2015wd}.
\end{remark}

\bibliographystyle{alpha}
\bibliography{biblio2}

\begin{thebibliography}{{McD}77}

\bibitem[BM05]{zbMATH02186801}
Meenaxi {Bhattacharjee} and Dugald {Macpherson}.
\newblock {A locally finite dense group acting on the random graph.}
\newblock {\em {Forum Math.}}, 17(3):513--517, 2005.

\bibitem[{Cha}12]{zbMATH06120601}
Vladimir {Chaynikov}.
\newblock {\em {Properties of hyperbolic groups: free normal subgroups, quasi-
  convex subgroups and actions of maximal growth.}}
\newblock PhD thesis, Vanderbilt University, 2012.

\bibitem[CV06]{zbMATH05068543}
P.J. {Cameron} and A.M. {Vershik}.
\newblock {Some isometry groups of the Urysohn space.}
\newblock {\em {Ann. Pure Appl. Logic}}, 143(1-3):70--78, 2006.

\bibitem[dC]{Corn}
Yves de~Cornulier.
\newblock Group actions with commensurated subsets, wallings, and cubings.
\newblock ArXiv:1302.5982.

\bibitem[{Dix}90]{zbMATH04105195}
John~D. {Dixon}.
\newblock {Most finitely generated permutation groups are free.}
\newblock {\em {Bull. Lond. Math. Soc.}}, 22(3):222--226, 1990.

\bibitem[DM96]{zbMATH00894528}
John~D. {Dixon} and Brian {Mortimer}.
\newblock {\em {Permutation groups}}.
\newblock New York, NY: Springer-Verlag, 1996.

\bibitem[FMS15]{zbMATH06503094}
Pierre {Fima}, Soyoung {Moon}, and Yves {Stalder}.
\newblock {Highly transitive actions of groups acting on trees.}
\newblock {\em {Proc. Am. Math. Soc.}}, 143(12):5083--5095, 2015.

\bibitem[FMS16]{FMS16}
Pierre {Fima}, Soyoung {Moon}, and Yves {Stalder}.
\newblock Homogeneous actions on the random graph.
\newblock {\em preprint arXiv:1603.03671}, 2016.

\bibitem[Gao09]{MR2455198}
Su~Gao.
\newblock {\em Invariant descriptive set theory}, volume 293 of {\em Pure and
  Applied Mathematics (Boca Raton)}.
\newblock CRC Press, Boca Raton, FL, 2009.

\bibitem[GG13]{zbMATH06190856}
Shelly {Garion} and Yair {Glasner}.
\newblock {Highly transitive actions of $\mathrm{Out}(F_n)$.}
\newblock {\em {Groups Geom. Dyn.}}, 7(2):357--376, 2013.

\bibitem[GM91]{zbMATH04181610}
A.M.W. {Glass} and Stephen~H. {McCleary}.
\newblock {Highly transitive representations of free groups and free products.}
\newblock {\em {Bull. Aust. Math. Soc.}}, 43(1):19--35, 1991.

\bibitem[{Gun}92]{zbMATH04193986}
Steven~V. {Gunhouse}.
\newblock {Highly transitive representations of free products on the natural
  numbers.}
\newblock {\em {Arch. Math.}}, 58(5):435--443, 1992.

\bibitem[{Hic}92]{zbMATH00120147}
K.K. {Hickin}.
\newblock {Highly transitive Jordan representations of free products.}
\newblock {\em {J. Lond. Math. Soc., II. Ser.}}, 46(1):81--91, 1992.

\bibitem[HO16]{zbMATH06673802}
Michael {Hull} and Denis {Osin}.
\newblock {Transitivity degrees of countable groups and acylindrical
  hyperbolicity.}
\newblock {\em {Isr. J. Math.}}, 216(1):307--353, 2016.

\bibitem[Hoo76]{Hooper_1976}
R.~C. Hooper.
\newblock Locally compact subgroups of metrizable topological abelian groups.
\newblock {\em Proceedings of the American Mathematical Society},
  57(1):159--159, Jan 1976.

\bibitem[Kat86]{katetov}
Miroslav Kat\v{e}tov.
\newblock On universal metric spaces.
\newblock In {\em General topology and its relations to modern analysis and
  algebra, VI}, volume~16 of {\em Res. Exp. Math}, pages 323--330, 1986.

\bibitem[{Kit}12]{zbMATH06204044}
Daniel {Kitroser}.
\newblock {Highly-transitive actions of surface groups.}
\newblock {\em {Proc. Am. Math. Soc.}}, 140(10):3365--3375, 2012.

\bibitem[LM18]{Maitre:2015wd}
Fran{\c c}ois Le~Ma{\^\i}tre.
\newblock Highly faithful actions and dense free subgroups in full groups.
\newblock {\em {Groups Geom. Dyn.}}, 12(1):207--230, 2018.

\bibitem[{Mac}86]{zbMATH03987259}
H.D. {MacPherson}.
\newblock {Groups of automorphisms of $\aleph \sb 0$-categorical structures.}
\newblock {\em {Q. J. Math., Oxf. II. Ser.}}, 37:449--465, 1986.

\bibitem[{McD}77]{zbMATH03558079}
T.P. {McDonough}.
\newblock {A permutation representation of a free group.}
\newblock {\em {Q. J. Math., Oxf. II. Ser.}}, 28:353--356, 1977.

\bibitem[MS13]{zbMATH06152250}
Soyoung {Moon} and Yves {Stalder}.
\newblock {Highly transitive actions of free products.}
\newblock {\em {Algebr. Geom. Topol.}}, 13(1):589--607, 2013.

\bibitem[Ros11]{Rosendal2011}
Christian Rosendal.
\newblock Finitely approximable groups and actions part {I}: {T}he
  {R}ibes-{Z}alesski\u\i \ property.
\newblock {\em J. Symbolic Logic}, 76(4):1297--1306, 2011.

\bibitem[Sch80]{MR583752}
G.~Schlichting.
\newblock Operationen mit periodischen {S}tabilisatoren.
\newblock {\em Arch. Math. (Basel)}, 34(2):97--99, 1980.

\bibitem[Ser77]{MR0476875}
Jean-Pierre Serre.
\newblock {\em Arbres, amalgames, {${\rm SL}_{2}$}}.
\newblock Soci{\'e}t{\'e} Math{\'e}matique de France, Paris, 1977.
\newblock Avec un sommaire anglais, R{{\'e}}dig{{\'e}} avec la collaboration de
  Hyman Bass, Ast{{\'e}}risque, No. 46.

\bibitem[TZ13a]{MR3104993}
Katrin Tent and Martin Ziegler.
\newblock The isometry group of the bounded {U}rysohn space is simple.
\newblock {\em Bull. Lond. Math. Soc.}, 45(5):1026--1030, 2013.

\bibitem[TZ13b]{MR3022717}
Katrin Tent and Martin Ziegler.
\newblock On the isometry group of the {U}rysohn space.
\newblock {\em J. Lond. Math. Soc. (2)}, 87(1):289--303, 2013.

\bibitem[{Ury}27]{zbMATH02582889}
P.~{Urysohn}.
\newblock {Sur un espace m\'etrique universel. I, II.}
\newblock {\em {Bull. Sci. Math., II. S\'er.}}, 51:43--64, 1927.

\end{thebibliography}

\end{document}